\def\theequation{\thesection.\arabic{equation}}
\theoremstyle{plain}
\newtheorem{theorem}{Theorem}[section]
\newtheorem{definition}[theorem]{Definition}
\newtheorem{lemma}[theorem]{Lemma}
\newtheorem{prop}[theorem]{Proposition}
\newtheorem{cor}[theorem]{Corollary}
\newtheorem{rem}[theorem]{Remark}
\newtheorem*{main}{Main Theorem}
\newcommand{\no}{\nonumber}
\newcommand{\bq}{\begin{equation}}
\newcommand{\eq}{\end{equation}}
\newcommand{\qq}{\qquad}
\newcommand{\q}{\quad}
\newcommand\ds{\displaystyle}
\newcommand{\eps}{\varepsilon}
\newcommand{\e}{\varepsilon}
\def\t{\tau}
\newcommand{\A}{\mathcal A}
\renewcommand{\t}{\tau}
\newcommand\R{{\mathbb R}}
\newcommand\N{{\mathbb N}}
\renewcommand{\phi}{\varphi}
\begin{document}
\title[Rigorous derivation of the Kuramoto-Sivashinsky equation]
{Rigorous derivation of the Kuramoto-Sivashinsky equation in a
$2$D weakly nonlinear Stefan problem.}
\author[C.-M. Brauner]{Claude-Michel Brauner}
\address{C.-M.B.: Institut de Math\'ematiques de Bordeaux\\
Universit\'e de Bordeaux \\
33405 Talence cedex (France)\\} \email{claude-michel.brauner@u-bordeaux1.fr}
\author[J. Hulshof]{Josephus Hulshof}
\address{J.H.: Faculty of Sciences, Mathematics and Computer Sciences Division\\
VU University Amsterdam\\
1081 HV Amsterdam (The Netherlands)} \email{jhulshof@cs.vu.nl}
\author[L. Lorenzi]{Luca Lorenzi$^{\dag}$}\thanks{$\dag$ Corresponding author (luca.lorenzi@unipr.it)}
\address{L.L.: Dipartimento di Matematica\\
Universit\`a di Parma\\
Viale G.P. Usberti 53/A, 43124 Parma (Italy)}
\email{luca.lorenzi@unipr.it}
\urladdr{www.unipr.it/$\sim$lorluc99/index.html}
\subjclass[2000]{35K55, 35R35, 35B25, 80A22}
\keywords{Kuramoto-Sivashinsky equation, front dynamics, Stefan
problems, singular perturbations, pseudo-differential operators}
\begin{abstract}
In this paper we are interested in a rigorous derivation of the
Kuramoto-Sivashinsky equation (K--S) in a Free Boundary Problem. As
a paradigm, we consider a two-dimensional Stefan problem in a strip,
a simplified version of a solid-liquid interface model. Near the
instability threshold, we introduce a small parameter $\e$ and
define rescaled variables accordingly. At fixed $\e$, our method is
based on: definition of a suitable linear $1$D operator, projection
with respect to the longitudinal coordinate only, Lyapunov-Schmidt
method. As a solvability condition, we derive a self-consistent
parabolic equation for the front. We prove that, starting from the
same configuration, the latter remains close to the solution of K--S
on a fixed time interval, uniformly in $\e$ sufficiently small.
\end{abstract}
\maketitle

\section{Introduction}
A very challenging problem in Free Boundary Problems is the
derivation of a single equation for the interface or moving front
which captures the dynamics of the system, at least asymptotically,
when a suitable parameter $\e$ tends to $0$. This program has been
formally achieved by Sivashinsky in the pioneering paper \cite{siva}
within the context of Near-Equidiffusional Flames (NEF) in
combustion theory (see \cite{mat-siva}). Near the instability
threshold, achieved at the critical value $\alpha=1$ ($\alpha$
reflects the physico-chemical characteristics of the combustible),
the {\it dispersion relation} between the wave number $k$ and the
growth rate $\omega_k$ reads:
\begin{eqnarray*}
\omega_k= (\alpha -1)k^2-4k^4,
\end{eqnarray*}
and its counterpart in the physical coordinates is the Kuramoto-Sivashinsky equation
\begin{equation}
\label{eqn:KS} \Phi_{\tau} + \nu\Phi_{\eta\eta\eta\eta} +
\Phi_{\eta\eta} + \frac{1}{2}({\Phi_{\eta})}^2 = 0
\end{equation}
(with $\nu = 4)$, a kind of modulation equation in the rescaled
independent variables $\t=t\e^2$ and $\eta=y\sqrt{\e}$, when the
small parameter $\e= \alpha -1$ tends to $0$. The
Kuramoto-Sivashinsky equation, that we abbreviate hereafter as the
K--S equation, or simply K--S, appears in a variety of domains in
physics and chemistry, where it models cellular instabilities,
pattern formation, turbulence phenomena and transition to chaos, see
(among many other references) \cite{HN,temam} and the bibliography
therein. There are many heuristic derivations of the K--S equation
in the literature.  Our purpose here is to provide some rigorous
mathematical commentary on the derivation of this well-known model.

As one would surmise at the outset, the K--S model comprises a
balance between several effects.  Roughly speaking, K--S arises when
the competing effects of a destabilizing linear part and a
stabilizing nonlinearity are the dominant processes in physical
reality.  The linear instability is itself the result of a
competition between two linear operators, $\A=D_{\eta\eta}$ and
$\nu\A^2$ (we call $\nu\A^2+\A$ the Kuramoto-Sivashinsky linear
operator).

Put another way, the K--S equation is the simplest, and indeed a
paradigm system in which these effects compete equally.  It is this
dominant balance that is explored rigorously in the present essay.
It will turn out that in deriving K--S as an asymptotic limit of
more complex systems, only certain type of terms contribute to the
lowest order of approximation.  Other types of terms will lead to
higher order perturbations. In a forthcoming paper, we intend to
consider the effects of these higher order perturbations on the
basic K--S system.

As a paradigm two-dimensional problem (see \cite{BFHS,BFHR,BFHLS}
for the one-dimensional case and the Q--S equation in flame front
dynamics), we consider a solid-liquid interface model introduced by
Frankel in \cite{frankel}. The solidification front is represented
by $x=\xi(t,y)$. The liquid phase occurs when $x<\xi(t,y)$, the
solid one when $x>\xi(t,y)$. The dynamics of heat is described by
the heat conduction equation
\begin{equation} \label{eqn:T}
T_t(t,x,y)= \Delta T(t,x,y),\quad \; x\neq \xi(t,y),
\end{equation}
where $y \in [-\ell/2, \ell/2]$ with periodic boundary conditions.
At $-\infty$, the temperature of the liquid is normalized to $0$. At
the front $x = \xi(t,y)$ there are two conditions. First, the
balance of energy at the interface is given by the jump
\begin{equation} \label{cond1} \left [\frac{\partial T}{\partial n}\right ] =
V_n,
\end{equation}%
where $V_n$ is the normal velocity.
Second, according to the Gibbs-Thompson law, the non-equilibrium interface temperature is defined by%
\begin{equation} \label{cond2} T=1-\gamma \kappa + r(V_n), \end{equation}%
where the melting temperature has been normalized to $1$, $\kappa$
is the interface curvature and the positive constant $\gamma$
represents the solid-liquid surface tension. The function $r$ is
increasing and such that  $r(-1)=0, \; r'(-1)= 1$, see
\cite{frankel,FRS}. Hereafter, we assume that $r-1$ is linear and we
replace the curvature by the second order derivative. Therefore,
\eqref{cond2} becomes:
\begin{equation}
\label{cond2bis}
T=1-\gamma \xi_{yy} + V_n +1.
\end{equation}%
It is no difficult to see that System \eqref{eqn:T},
\eqref{cond1}, \eqref{cond2} admits a one-phase planar travelling
wave (TW) solution $\hat{T}$, which satisfies
\begin{eqnarray*}
\hat{T}_x= \hat{T}_{xx},\quad\; x\neq 0.
\end{eqnarray*}
At the front $x=0$,
\begin{eqnarray*}
[{\hat{T}}_x] = -1,\quad\; \hat{T}=1.
\end{eqnarray*}
Hence, $ \hat{T}(x) = e^x$ for $x<0$, and $\hat{T}(x)=1$ for
$x>0$.

As usual, we fix the free boundary. We set $\xi(t,y)=-t +
\varphi(t,y)$, $x'=x-\xi(t,y)$ and we will omit primes. In this new
framework, \eqref{eqn:T} reads:
\begin{eqnarray*}
T_t +(1-\varphi_t) T_x = \Delta_\varphi T, \quad x \neq 0,
\end{eqnarray*}
where $\displaystyle \Delta_\varphi = (1 + (\varphi_y)^2)D_{xx} + D_{yy}
-\varphi_{yy}D_x -2\varphi_y D_{xy}.$
The front is now fixed at $x=0$.  The first condition \eqref{cond1}
reads:
\begin{equation} \label{velocity} \varphi_t = 1 + (1+(\varphi_y)^2)\,\left[T_x\right],
\end{equation}
whereas we replace \eqref{cond2bis} by
\begin{eqnarray*}
\label{cond2ter} T=1-\gamma \varphi_{yy} + \varphi_t
+\frac{1}{2}(\varphi_y)^2.
\end{eqnarray*}
Introducing the temperature perturbation $u = T - \hat{T}$, the
problem for the couple $(u,\varphi)$ reads:
\begin{equation}
\label{eqn:u-0} u_t + (1-\varphi_t)u_x -\Delta_\varphi u - \varphi_t
\hat{T}_x = (\Delta_\varphi - \Delta)\hat{T}, \quad x \neq 0,
\end{equation}
where
\begin{eqnarray*}
(\Delta_\varphi - \Delta)\hat{T} =\{(\varphi_y)^2 -
\varphi_{yy}\}e^x\chi_{(-\infty,0)}= \left((\varphi_y)^2 -
\varphi_{yy}\right)\hat{T}_x.
\end{eqnarray*}

As in \cite{BHL08}, we make further simplifications: (i) we consider
a \textit{quasi-steady} problem, dropping the time derivative $u_t$
in \eqref{eqn:u-0}; (ii) we take a linearized problem for $u$; (iii)
we limit ourselves to considering only the second order terms in the
jump conditions at $x=0$. Actually, as it has been observed in
similar problems (see \cite{BFHS}), not far from the instability
threshold the time derivative in the temperature equation has a
relatively small effect on the solution. Our final system reads:
\begin{align}
\label{eqn:u}
&u_x -\Delta u - \varphi_t \hat{T}_x = (\Delta_\varphi -
\Delta)\hat{T}, \quad x \neq 0,\\[1.5mm]
\label{cond-u-1}
&\varphi_t = \left[u_x\right] - (\varphi_y)^2,\\[1.5mm]
\label{cond-u-2}
&u_{|x=0}= -\gamma\varphi_{yy} + \varphi_t
+\frac{1}{2}(\varphi_y)^2.
\end{align}

For the convenience of the reader, we recall the main results of
\cite{BHL08}, where we considered Problem
\eqref{eqn:u}-\eqref{cond-u-2} in the strip $\R \times
[-\ell/2,\ell/2]$, with periodic boundary conditions prescribed at
$y=\pm\ell/2$. More precisely, we studied the stability of the TW
solution and proved the following result: there exists $\gamma_c <1$
such that
\begin{enumerate}[\rm (i)]
\item
for $\gamma>\gamma_c$, the TW solution to Problem
\eqref{eqn:u}-\eqref{cond-u-2} is orbitally stable $($with
asymptotic phase$)$;
\item for
$0<\gamma<\gamma_c$, the TW is unstable.
\end{enumerate}
We also showed that $\gamma_c=1-3\lambda_{1}(\ell)+\cdots$, where
$-\lambda_{1}(\ell)=-4\pi^2/\ell^2$ is the largest eigenvalue of the
realization of $D_{yy}$ in $C([-\ell/2, \ell/2])$
with periodic boundary conditions and zero average.

The main tool is the derivation
of a self-consistent equation for the front $\varphi$:
\begin{equation}
\label{pseudoKS}
\varphi_t+G((\varphi_y)^2)= \Omega\varphi, \qquad |y| \leq \frac{\ell}{2},
\end{equation}
where both $\Omega$ and $G$ are linear pseudo-differential operators whose symbols
$\omega_k$ and $g_k$ are explicit and $g_0=\frac{1}{2}$.
Hence, at the zeroth order $G((\varphi_y)^2)$ coincides with the quadratic term of K--S.
If we think {\it formally} of
\eqref{pseudoKS} in the whole space (i.e. $\ell = +\infty$), then
$\omega_k$ is the growth rate which expands, for small wave number $k$, as
\begin{eqnarray*}
\omega(k)=(1-\gamma)k^2+(\gamma-4)k^4+\cdots,
\end{eqnarray*}
with exchange of stability at $\gamma=1$. Therefore, when $\gamma$
is close to unity, but smaller, it is natural to introduce a small
parameter $\e>0$, setting:
\begin{equation*}
\gamma = 1-\e,
\end{equation*}
and define the rescaled dependent and independent variables accordingly:
\begin{equation}
t= \tau/\eps^2,\quad y = \eta/\sqrt{\eps},\quad u=\eps^2 v,\quad
\varphi = \eps \psi. \label{new-variables}
\end{equation}
Then we anticipate, in the limit $\e \to 0$, that $\psi \simeq \Phi$,
where $\Phi$ solves the following K--S equation (with $\nu=3$):
\begin{equation}
\label{eqn:KS-1}
\Phi_{\tau} + 3\Phi_{\eta\eta\eta\eta} +
\Phi_{\eta\eta} + \frac{1}{2}(\Phi_{\eta})^2 = 0.
\end{equation}

This is what we have to establish in a rigorous mathematical way.
Let us fix $\ell_0>0$. The main idea is to link the small
parameter $\e$ and the width of the strip, which will become larger and larger as $\e \to 0$,
i.e. as $\gamma \to 1$. Take for $\ell$:
\begin{equation*}
\ell_{\e}= \ell_0/\sqrt{\eps},
\end{equation*}
which blows up as $\e \to 0$ and the strip $\R \times
[-\ell_{\e}/2,\ell_{\e}/2]$ approaches $\R^2$. We easily see that
$\lambda_{1}(\ell_{\e})=4\pi^2/\ell^2_{\e} = 4\pi^2\e/\ell_0^2$;
hence,
\begin{eqnarray*}
\gamma_c=1-\frac{12\pi^2}{\ell_0^2}\,\e + \ldots
\end{eqnarray*}
Thus, $\ell_0$ becomes the new bifurcation parameter. We shall
assume that $\ell_0>\sqrt{12}\pi$ in order to have $\gamma_c \in
(1-\e,1)$, i.e., $\gamma>\gamma_c$, otherwise the TW is stable and
the dynamics is trivial. Clearly, this is related to the stability
of the null solution to K--S. The relevant eigenvalue of the
Kuramoto-Sivashinsky linear operator $3\A^2+\A$ is
$3\lambda_{1}(\ell_0)^2-\lambda_{1}(\ell_0)$ which vanishes for
$\lambda_{1}(\ell_0)=1/3$, i.e., when $\ell_0=\sqrt{12}\pi$.

An important feature of this paper is that we work in the {\it fixed
strip} $\R \times [-\ell_0/2,\ell_0/2]$, with the rescaled variables
\eqref{new-variables}. We will return to the original variable only
in the final section.

The main result is the following.

\begin{main}
Let $\Phi_0\in C^{6+2\alpha}([-\ell_0/2,\ell_0/2])$, for some
$\alpha\in (0,1/2)$, satisfy
$D_{\eta}^{(k)}\Phi_0(-\ell_0/2)=D_{\eta}^{(k)}\Phi_0(\ell_0/2)$ for
any $k=0,\ldots, 6$. Let $\Phi$ be the periodic solution of
\eqref{eqn:KS-1} $($with period $\ell_0)$ on a fixed time interval
$[0,T]$, satisfying the initial condition $\Phi(0,\cdot)=\Phi_0$.
Then, there exists $\e_0=\e_0(T)\in (0,1/2)$ such that, for
$0<\e\leq\e_{0}$, Problem \eqref{eqn:u}-\eqref{cond-u-2} admits a
unique smooth solution $(u, \varphi)$ on $[0,T/\e^2]$, which is
periodic with period $\ell_{0}/\sqrt{\varepsilon}$ with respect to
$y$, and satisfies
\begin{eqnarray*}
\varphi(0,y)=\e\Phi_0(y\sqrt{\eps}),\qquad\;\,|y|\le
\frac{\ell_0}{2\sqrt{\e}}.
\end{eqnarray*}
Moreover, there exists a positive constant $C$, independent of
$\e\in (0,\e_0]$, such that
\begin{eqnarray*}
|\varphi(t,y)-\varepsilon
\Phi(t\varepsilon^{2},y\sqrt{\varepsilon})|\leq C\,\varepsilon^{2},\qquad\;\,
0\leq t\leq\frac{T}{\varepsilon^{2}},\;\,|y|\leq\frac{\ell_{0}}{2\sqrt{\varepsilon}}.
\end{eqnarray*}
\end{main}
For a precise definition of what smooth solution means we refer the reader to Section
\ref{subsect-6.3}.

Clearly, the initial condition for $\varphi$ is of special type,
compatible with $\Phi_{0}$\ and \eqref{eqn:KS} at $\tau=0$. Initial
conditions of this type have been already considered in
\cite{BFHLS,BFHS}.

The paper is organized as follows. In Section
\ref{sect-notation-spaces} we introduce some notation and the
function spaces we extensively use throughout the paper. In Section
\ref{sect:ansatz} we proceed to a formal {\it Ansatz} in the spirit
of  \cite{siva}. We set $\gamma=1-\e$, split $v=v^0+\e v^1+\ldots$,
$\psi=\psi^0+\e\psi^1+\ldots$, and show that $\psi^0$ verifies the
K--S equation \eqref{eqn:KS-1}, thanks to an elementary solvability
condition. The paper consists in giving a rigorous proof of the
Ansatz (i.e., to prove the main theorem), thanks to an abstract
solvability condition within the framework of adequate function
spaces. In this respect, in Section \ref{sect:equiv} we transform
System \eqref{eqn:u}-\eqref{cond-u-2} in an equivalent problem (for
the new unknowns) using the techniques of \cite{BHL08}, which are
based on
\begin{enumerate}[\rm (i)]
\item
definition of a suitable linear one-dimensional operator;
\item
projection with respect to the $x$ coordinate only;
\item
Lyapunov-Schmidt method.
\end{enumerate}
This allows us to decouple the system into a self-consistent fourth
order (in space) parabolic equation for the front $\psi$ and an
elliptic equation which can be easily solved whenever a solution to
the front equation is determined. Hence, the rest of the paper is
devoted to study the parabolic equation. In this respect, according
to the Ansatz, we split $\psi=\Phi+\e\rho_{\e}$. In Section
\ref{sect:AP}, we solve the fourth order equation for $\rho_{\e}$,
locally in time, with time domain possibly depending on $\e$. Then,
in Section \ref{sect-6}, we prove that, for any $T>0$, the function
$\rho_{\e}$ exists, and is smooth, in the whole of $[0,T]$ provided
$\e$ is small enough. This result is obtained as a consequence of
some {\em a priori estimates} independent of $\e$, which we prove in
Subsection \ref{sect:AP}. The {\it a priori estimates} are also used
to prove the main theorem (see Subsection \ref{subsect-6.3}).
Finally, some technical tools are deferred to the appendix.

\section{Notation and function spaces}
\setcounter{equation}{0} \label{sect-notation-spaces}
In this
section we introduce some notation and the function spaces which
will be used throughout the paper.

\subsection{Notation}
We denote by $I$, $I_-$ and $I_+$, respectively, the sets
\begin{align*}
&I=\mathbb R\times [-\ell_0/2,\ell_0/2],\\
&I_-=(-\infty,0]\times [-\ell_0/2,\ell_0/2],\\
&I_+=[0,+\infty)\times [-\ell_0/2,\ell_0/2].
\end{align*}
We use the bold notation to denote the elements of both the spaces
$C((-\infty,0])\times C([0,+\infty))$ and $C(I_-)\times C(I_+)$.
Given an element ${\bf u}$ of the previous spaces we denote by $u_1$
and $u_2$ its components. Hence, $u_1\in C((-\infty,0])$ $($resp.
$u_1\in C(I_-)$ and $u_2\in C([0,+\infty))$ $($resp. $u_2\in
C(I_+))$. We write $D_x^{(i)}{\bf u}$ (resp. $D_y^{(i)}{\bf u}$)
$(i=1,2,\ldots)$ to denote the (generalized) function whose
components are $D_x^{(i)}u_1$ and $D_x^{(i)}u_2$ (resp.
$D_y^{(i)}u_1$ and $D_y^{(i)}u_2$).

We extensively use the (generalized) functions ${\bf T}$, ${\bf
T}'$, ${\bf U}$ and ${\bf V}$, which are defined by
\begin{equation}
\left\{
\begin{array}{ll}
T_1(x)=e^x, & x\le 0,\\[2mm]
T_2(x)=1, & x\ge 0,
\end{array}
\right.\qquad\quad\; \left\{
\begin{array}{ll}
T'_1(x)=e^x, & x\le 0,\\[2mm]
T'_2(x)=0, &x\ge 0,
\end{array}
\right.
\label{funct-u2-a}
\end{equation}
\begin{equation}
\left\{
\begin{array}{ll}
U_1(x)=\displaystyle\frac{1-x}{3}e^x, &x\le 0,\\[2mm]
U_2(x)=\displaystyle\frac{1}{3}, &x\ge 0,
\end{array}
\right. \qquad\quad\; \left\{
\begin{array}{ll}
V_1(x)=\ds\left (1-\frac{2}{3}x+\frac{x^2}{6}\right )e^x, & x\le 0,\\[3.5mm]
V_2(x)=\ds 1+\frac{x}{3}, & x\ge 0.
\end{array}
\right. \label{funct-u2-b}
\end{equation}

\subsection{Function spaces}
\label{subsect-funct}
Here, we introduce the function spaces we use
in the paper.

\subsubsection{Spaces of one variable only}
Let us fix $\ell_0>0$ and denote by $L^2$ the space of all square
integrable functions $f$ defined in $(-\ell_0/2,\ell_0/2)$, endowed
with the Euclidean norm
\begin{eqnarray*}
\|w\|_2^2=\int_{-\frac{\ell_0}{2}}^{\frac{\ell_0}{2}}w^2d\eta.
\end{eqnarray*}
Given a real (or even complex valued function) $f\in
L^2(-\ell_0/2,\ell_0/2)$, we denote by $\hat f(k)$ its $k$-th
Fourier coefficient, i.e., we write
\begin{eqnarray*}
f(\eta)=\sum_{k=0}^{+\infty}\hat f(k)w_k(\eta),\qquad\;\,\eta\in
(-\ell_0/2,\ell_0/2),
\end{eqnarray*}
where $\{w_{k}\}$ is a complete set of eigenfunctions of the
operator
\begin{eqnarray*}
A:D(A)={H}^{2}\,\to\, {L}^{2},\qquad\;\,Au=D_{\eta\eta}u,\quad u\in
D(A),
\end{eqnarray*}
with $\ell_0$-periodic boundary conditions, corresponding to the
non-positive eigenvalues
\begin{eqnarray*}
0,-\frac{4\pi^2}{\ell_0^2},-\frac{4\pi^2}{\ell_0^2},-\frac{16\pi^2}{\ell_0^2},
-\frac{16\pi^2}{\ell_0^2},-\frac{36\pi^2}{\ell_0^2},\dots
\end{eqnarray*}
For notational convenience we label this sequence as
\begin{eqnarray*}
0=-\lambda_0>-\lambda_1=-\lambda_2>-\lambda_3=-\lambda_4>\dots
\end{eqnarray*}

\noindent For integer or arbitrary real $s$ we denote by $H^s$ the
usual Sobolev spaces of $\ell_0$-periodic (generalized) functions,
which we conveniently represent as
\begin{equation}
H^{s}=\left\{w=\sum_{k=0}^{+\infty}a_{k}
w_{k}:\,\sum_{k=0}^{+\infty}\lambda_{k}^{s}
a_{k}^{2}<+\infty\right\},
\label{space-H}
\end{equation}
with the usual norm. Next, for any $\beta\ge 0$, we denote by
$C_{\sharp}^{\beta}$ the space of all functions $f\in
C^{\beta}:=C^{\beta}([-\ell_0/2,\ell_0/2])$ such that
$f^{(j)}(-\ell_0/2)=f^{(j)}(\ell_0/2)$ for any $j=0,\ldots,[\beta]$.
The space $C_{\sharp}^{\beta}$ is endowed with the Euclidean norm of
$C^{\beta}([-\ell_0/2,\ell_0/2])$.

\subsubsection{Function spaces of two variables}
Given $h,k\in\N\cup\{0\}$, an interval $J\subset\R$ and a
$($possibly unbounded$)$ closed set $K\subset\R^d$ (for some
$d\in\N$), we denote by $C^{h,k}(J\times K)$, the set of functions
$f:J\times K\to\R$ which are $h$-times continuously differentiable
in $J\times K$ with respect to the first variable and $k$-times
continuously differentiable in $J\times K$ with respect to the
second variable. When $J\times K$ is a compact set, we endow the
space $C^{h,k}(J\times K)$ with the norm
\begin{equation}
\|f\|_{C^{h,k}(J\times K)}= \sup_{s\in
J}\|f(s,\cdot)\|_{C^k(K)}+\sup_{z\in K}\|f(\cdot,z)\|_{C^h(J)},
\label{spazi-h-k}
\end{equation}
for any $f\in C^{h,k}(J\times K)$. Using \eqref{spazi-h-k} we can extend the definition of the spaces $C^{h,k}(J\times K)$ to the case when $h,k\notin\N$.

\noindent Next, we introduce the space ${\mathscr X}$ defined by:
\begin{equation}
{\mathscr X}=\left\{{\bf f}=(f_1,f_2)\in C(I_-)\times C(I_+):~\tilde
f_1\in C_b(I_-), ~\tilde f_2\in C_b(I_+)\right\},
\label{X-space}
\end{equation}
where ``$b$'' stands for bounded and the functions $\tilde f_1$ and $\tilde f_2$ are defined as follows:
\setcounter{equation}{5}
\begin{align*}
&\tilde f_1(x,\eta)=e^{-\frac{x}{2}}f_1(x,\eta),\; \; x\le 0,\;\,|\eta|\le \frac{\ell_0}{2},\\
&\tilde f_2(x,\eta)=e^{-\frac{x}{2}}f_2(x,\eta),\; \; x\ge
0,\;\,|\eta|\le \frac{\ell_0}{2}.
\end{align*}
In the sequel, we will write $\tilde {\bf f}:=(\tilde f_1,\tilde f_2)$.
The space ${\mathscr X}$ is a Banach space when endowed
with the norm
\begin{eqnarray*}
\|{\bf f}\|_{{\mathscr X}}=\|\tilde f_1\|_{C_b(I_-)} +\|\tilde
f_2\|_{C_b(I_+)}:=\sup_{(x,\eta)\in I_-}|\tilde f_1(x,\eta)|+\sup_{(x,\eta)\in I_+}|\tilde
f_1(x,\eta)|,
\end{eqnarray*}
for any ${\bf f}\in {\mathscr X}$.

\section{Formal Ansatz}
\label{sect:ansatz} \setcounter{equation}{0} Let us set
$\gamma=1-\e$ in \eqref{cond-u-2}. Applying the change of variables
defined by \eqref{new-variables} to Problem
\eqref{eqn:u}-\eqref{cond-u-2}, the problem for the couple
$(v,\psi)$ reads (after simplification by $\eps^2$) as follows:
\begin{eqnarray}
 v_x - \left (v_{xx} + \e v_{\eta\eta}\right )=
(\eps\psi_\tau + \eps(\psi_\eta)^2 - \psi_{\eta\eta})\hat{T}_x,
\label{eqn:v}
\end{eqnarray}
and at $x=0$:
\begin{align}
\label{cond-v-1} &\eps  \psi_\tau =\left [v_x\right ] - \eps
(\psi_\eta)^2,\\[1.5mm]
\label{cond-v-2}
&v_{|x=0}=-\psi_{\eta\eta} +  \eps \psi_{\eta\eta}
+\eps (\psi_\tau + \frac{1}{2}(\psi_\eta)^2).
\end{align}
%

In the spirit of \cite[p. 75]{siva}, we look for formal expansions:
\begin{eqnarray*}
v=v^0 + \eps v^1 + \ldots, \qquad \psi = \psi^0 + \eps \psi^1
+\ldots
\end{eqnarray*}
of the solution to Problem \eqref{eqn:v}--\eqref{cond-v-2}.
Considering the zeroth order part of \eqref{eqn:v}--\eqref{cond-v-2}
(i.e., the terms with no powers of $\varepsilon$ in front), it is
easy to see that the function $v^0$ verifies the system
\begin{align}
\label{Otheq1}
&v^0_x - v^0_{xx} = -
\psi^0_{\eta\eta}e^x\chi_{(-\infty,0]},\\[1.5mm]
\label{Otheq2}
&[v^0_x]=0,\\[1.5mm]
\label{Otheq3}
&v^0_{|x=0} = -\psi^0_{\eta\eta}.
\end{align}
It is trivial to solve \eqref{Otheq1} together with e.g.,
\eqref{Otheq3}: it gives
\begin{eqnarray*}
v^0=\left\{
\begin{array}{ll}
-\psi^0_{\eta\eta} e^x (1-x),\q &x\le 0,\\[2mm]
-\psi^0_{\eta\eta}, &x>0.
\end{array}
\right.
\end{eqnarray*}
We remark that \eqref{Otheq2} is automatically verified. Hence, we
are unable to ``close" the nonlinear system for $(v^0, \psi^0)$ at
the zeroth order. This situation is quite common in singular
perturbation theory when the zeroth order can not be fully
determined, see e.g., \cite{eckhaus}. In such a case, one needs to
go to the first order, which is indeed linear. Most often, the
latter demands a solvability condition, for example based on the
Fredholm alternative, which provides the missing relation for the
zeroth order. Therefore, repeating computations similar to the
previous ones, we get the following system for $(v^1, \psi^1)$:
\begin{equation} \label{1steq1} v^1_x - v^1_{xx} -v^0_{\eta\eta} = \{\psi^0_\tau
+ (\psi^0_\eta)^2 - \psi^1_{\eta\eta}\}e^x\chi_{(-\infty,0]}.
\end{equation}
At $x=0$, %
\begin{align}
\label{1steq2}
&[v^1_x] = \psi^0_\tau + (\psi^0_\eta)^2,\\[1.5mm]
\label{1steq3}
&v^1_{|x=0} = - \psi^1_{\eta\eta} + \psi^0_{\eta\eta}
+ \psi^0_\tau + \frac{1}{2}(\psi^0_\eta)^2.
\end{align}
Obviously,
\begin{eqnarray*}
v^0_{\eta\eta} = \left\{
\begin{array}{ll}
-\psi^0_{\eta\eta\eta\eta} e^x
(1-x),\q &x\le 0,\\[2mm]
-\psi^0_{\eta\eta\eta\eta}, & x>0.
\end{array}
\right.
\end{eqnarray*}
Clearly, the solution to \eqref{1steq1} is given by
\begin{eqnarray*}
v^1 = \left\{
\begin{array}{ll}
ae^x+2\psi^0_{\eta\eta\eta\eta} - \psi^0_\tau - (\psi^0_\eta)^2 +
\psi^1_{\eta\eta}xe^x-\frac{1}{2}\psi^0_{\eta\eta\eta\eta}x^2e^x, &
x\le 0,\\[2mm]
a - \psi^0_{\eta\eta\eta\eta} x, & x\ge 0,
\end{array}
\right.
\end{eqnarray*}
where $a$ is an arbitrary parameter. There are two remaining
unknowns at the first order, namely $a$ and $\psi^1_{\eta\eta}$, and
still two relations at $x=0$. First, we use \eqref{1steq3}, which
gives:
\begin{equation} \label{syst1}
a = v^1(0) = - \psi^1_{\eta\eta} + \psi^0_{\eta\eta} + \psi^0_\tau +
\frac{1}{2}(\psi^0_\eta)^2.
\end{equation}
Second, we compute:
\begin{eqnarray*}
v^1_x(0^+)= -\psi^0_{\eta\eta\eta\eta}
\end{eqnarray*}
and
\begin{eqnarray*}
v^1_x(0^-) = a + 2\psi^0_{\eta\eta\eta\eta} - \psi^0_\tau -
(\psi^0_\eta)^2 + \psi^1_{\eta\eta}.
\end{eqnarray*}
Therefore, from \eqref{1steq2} we get:
\begin{equation} \label{syst2} v^1_x(0^+)-v^1_x(0^-) = -a -
\{3\psi^0_{\eta\eta\eta\eta} - \psi^0_\tau - (\psi^0_\eta)^2 +
\psi^1_{\eta\eta} \} = \psi^0_\tau + (\psi^0_\eta)^2. \end{equation}
Obviously \eqref{syst1}-\eqref{syst2} is a linear system for
$(a,\psi^1_{\eta\eta})$ with solvability condition:
\begin{eqnarray*}
\psi^0_{\eta\eta} + \psi^0_\tau +\frac{1}{2} (\psi^0_\eta)^2 +
3\psi^0_ {\eta\eta\eta\eta}=0,
\end{eqnarray*}
i.e., $\psi^0$ verifies a K--S equation.
\section{An equivalent problem to \eqref{eqn:v}--\eqref{cond-v-2}}
\label{sect:equiv} \setcounter{equation}{0}

The aim of this section consists in transforming Problem
\eqref{eqn:v}--\eqref{cond-v-2} into an equivalent one.
More precisely, we are going to decouple the problem for $(v,\psi)$, getting a self-consistent
equation for the front $\psi$ and an equation for the other unknown (say ${\bf z}$) which can be
immediately solved once $\psi$ is known.

In deriving the equivalent problem, we assume that the solution
$(v,\psi)$ to Problem \eqref{eqn:v}--\eqref{cond-v-2} in the time
domain $[0,T]$ belongs to the space ${\mathscr V}_T\times {\mathscr
Y}_T$ where

\begin{definition}
\label{def-YT}
For any $T>0$, we denote by ${\mathscr V}_T$ the
space of all functions $v:[0,T]\times \R\times
[-\ell_0/2,\ell_0/2]\to\R$ such that
\begin{enumerate}[\rm (i)]
\item
$v$ is twice continuously differentiable with respect to the spatial
variable in $[0,T]\times I_-$ and in $[0,T]\times I_+$;
\item
the functions $(\tau,x,\eta)\mapsto
e^{-\frac{x}{2}}D^{(i)}_xu(\tau,x,\eta)$ and $(\tau,x,\eta)\mapsto
e^{-\frac{x}{2}}D^{(i)}_{\eta}u(\tau,x,\eta)$ are bounded in
$[0,T]\times I_-$ and in $[0,T]\times I_+$ for any $i=0,1,2$.
\end{enumerate}
Further, for any $\alpha\in (0,1/2)$, we denote by ${\mathscr Y}_T$
the space of all functions $\zeta\in C^{1,4}([0,T]\times
[-\ell_0/2,\ell_0/2])$, such that $\zeta_{\tau}\in
C^{0,2+\alpha}([0,T]\times [-\ell_0/2,\ell_0/2])$ and
$D_{\eta}^{(j)}\zeta(\cdot,-\ell_0/2)=D_{\eta}^{(j)}\zeta(\cdot,\ell_0/2)$
for $j=0,1,2,3$.
\end{definition}

\begin{rem}
\label{rem-smooth} It is immediate to check that, if $\zeta\in
{\mathscr Y}_T$, then the function $\psi_{\eta\eta}$ is
continuously differentiable in $[0,T]\times [-\ell_0/2,\ell_0/2]$
with respect to $\tau$ and, consequently,
$\psi_{\tau\eta\eta}=\psi_{\eta\eta\tau}$. Hence, in what follows,
we always write $\psi_{\tau\eta\eta}$ instead of
$\psi_{\eta\eta\tau}$.
\end{rem}

\subsection{Derivation of a self-consistent equation for the front}
\label{sect-self-cons}

In this subsection we derive a self-consistent equation for the
front. Since its derivation is rather long, we split the proof into
several steps.

\subsubsection{Elimination of $\psi_{\tau}$}
\label{sub-3.1}
First we eliminate $\psi_{\tau}$ in \eqref{eqn:v}
thanks to \eqref{cond-v-2}, getting the equation
\begin{equation}
v_x -v_{xx}-\varepsilon v_{\eta\eta}-v(\cdot,0,\cdot)\hat{T}_x =
\left
(\frac{1}{2}\varepsilon(\psi_{\eta})^2-\varepsilon\psi_{\eta\eta}\right
)\hat{T}_x. \label{part-u}
\end{equation}
Let us set ${\bf
v}(\tau,x,\eta):=(v(\tau,x,\eta)\chi_{(-\infty,0]}(x),v(\tau,x,\eta)\chi_{[0,+\infty)}(x))$
and
\begin{eqnarray*}
{\bf F}_0=\left
(\psi_{\eta\eta}-\frac{1}{2}(\psi_{\eta})^2\right
){\bf T}', \quad g=\psi_{\tau}+(\psi_{\eta})^2,
\end{eqnarray*}
where ${\bf T}'$ is given by \eqref{funct-u2-a}. Taking
\eqref{velocity} and \eqref{part-u} into account, one can easily
show that the function ${\bf v}$ solves the problem
\begin{equation}
\left\{
\begin{array}{ll}
{\mathscr L}{\bf v}=\e{\bf F}_0 -\varepsilon {\bf v}_{\eta\eta},\\[2mm]
v_2(\cdot, 0,\cdot)-v_1(\cdot,0,\cdot)=0,\\[2mm]
D_xv_2(\cdot,0,\cdot)-D_xv_1(\cdot,0,\cdot)=\varepsilon g,
\end{array}
\right. \label{pb-fin-u}
\end{equation}
where
\begin{eqnarray*}
({\mathscr L}{\bf v})(\cdot,x,\eta)= \left\{
\begin{array}{lll}
D_{xx}v_1(\cdot,x,\eta)-D_xv_1(\cdot,x,\eta)+e^xv_1(\cdot,0,\eta), &x\le 0, &|\eta|\le \frac{\ell_0}{2},\\[2mm]
D_{xx}v_2(\cdot,x,\eta)-D_xv_2(\cdot,x,\eta), &x\ge 0, &|\eta|\le
\frac{\ell_0}{2}.
\end{array}
\right.
\end{eqnarray*}

\subsubsection{Lifting up the boundary conditions}
\label{sub-3.2}
Now we are going to use the first part of
\eqref{velocity}. We introduce the new unknown ${\bf w}={\bf
v}-\varepsilon {\mathscr N}(g)$, where ${\mathscr N}(g)=g({\bf
V}-{\bf T})$, and ${\bf V}$ and ${\bf T}$ are defined in
\eqref{funct-u2-a} and \eqref{funct-u2-b}. With a straightforward
computation, we see that the function ${\bf w}$ turns out to solve
the problem
\begin{equation}
\left\{
\begin{array}{ll}
{\mathscr L}{\bf w}=\e{\bf F}_0 -\varepsilon{\bf
w}_{\eta\eta}-\varepsilon^2 g_{\eta\eta}{\mathscr N}(1)-
\varepsilon g{\mathscr L}{\mathscr N}(1),\\[2mm]
w_2(\cdot,0,\cdot)-w_1(\cdot,0,\cdot)=0,\\[2mm]
D_xw_2(\cdot,0,\cdot)-D_xw_1(\cdot,0,\cdot)=0.
\end{array}
\right. \label{Lw=0}
\end{equation}
Since ${\bf v}\in {\mathscr V}_T$, ${\bf v}(\tau,\cdot)$, ${\mathscr
L}{\bf v}(\tau,\cdot) \in {\mathscr X}$ (see \eqref{X-space} for the
definition of the space ${\mathscr X}$) for any $\tau\in [0,T]$,
then a straightforward computation shows that the function ${\bf
w}(\tau,\cdot)$ belongs to ${\mathscr X}$ for any $\tau\in [0,T]$,
and, hence, to the set
\begin{eqnarray*}
\Big\{{\bf h}\in C^{2,0}(I_-)\times C^{2,0}(I_+): {\bf h},~{\mathscr
L}{\bf h}\in {\mathscr
X},\;D_x^{(j)}h_1(0,\cdot)=D_x^{(j)}h_2(0,\cdot),~j=0,1\Big\},
\end{eqnarray*}
which is the domain of the realization $L$ of the operator
${\mathscr L}$ in ${\mathscr X}$, see Section \ref{Lstuff}.

\subsubsection{A Lyapunov-Schmidt method}
\label{sub-3.3} From the results in the previous subsection, we know
that ${\bf w}(\tau,\cdot)\in D(L)$ for any $\tau\in [0,T]$, and it
solves the equation
\begin{equation}
{L}{\bf w}=\varepsilon{\bf F}_0 -\varepsilon{\bf
w}_{\eta\eta}-\varepsilon^2g_{\eta\eta}{\mathscr N}(1)-\varepsilon
g{\mathscr L}{\mathscr N}(1). \label{eq-w}
\end{equation}
We are going to project \eqref{eq-w} along a suitable subspace of
${\mathscr X}$, to derive a self-consistent equation for the front
$\psi$.

As Theorem \ref{thm:1} shows, the operator $L$ is sectorial in ${\mathscr X}$.
Hence, it generates an analytic semigroup. Moreover, $0$ is an isolated eigenvalue
of $L$ and the spectral projection on the kernel of $L$ is the operator ${\mathscr P}$
defined by
\begin{eqnarray*}
{\mathscr P}({\bf f})=\left
(\int_{-\infty}^0f_1(x,\cdot)dx+\int_0^{+\infty}e^{-x}f_2(x,\cdot)dx\right
) {\bf U}:= Q({\bf f}){\bf U},\qquad\;\,{\bf f}\in {\mathscr X}.
\end{eqnarray*}
From the very general theory of analytic semigroup, it follows that,
for a given ${\mathbf g}\in {\mathscr X}$, the equation $L{\bf
z}={\bf g}$ admits a solution ${\bf z}\in D(L)$ if and only if
${\mathscr P}({\bf g})=0$. Since ${\bf w}$ solves Equation
\eqref{eq-w}, it follows that
\begin{eqnarray*}
{\mathscr P}(\e {\bf F}_0 -\varepsilon {\bf w}_{\eta\eta}
-\varepsilon g{\mathscr L}{\mathscr N}(1)-\varepsilon^2g_{\eta\eta}{\mathscr N}(1))=0,
\end{eqnarray*}
or equivalently, after division by $\e>0$,
\begin{equation}
0=Q({\bf F}_0 -{\bf w}_{\eta\eta}-g{\mathscr
L}{\mathscr N}(1)-\varepsilon g_{\eta\eta}{\mathscr N}(1) ). \label{Q=0}
\end{equation}
Since \setcounter{equation}{6}
\begin{align}
&Q({\bf F}_0)= \psi_{\eta\eta}-\frac{1}{2}(\psi_{\eta})^2,\tag{\theequation a}\label{Q-1}\\
&Q(g_{\eta\eta}{\mathscr N}(1))= \frac{4}{3}g_{\eta\eta}= \frac{4}{3}\left(\psi_{\tau\eta\eta}+((\psi_{\eta})^2)_{\eta\eta}\right),\tag{\theequation b}\label{Q-2}\\
&Q(g{\mathscr L}{\mathscr
N}(1))=-g=-\psi_{\tau}-(\psi_{\eta})^2,\tag{\theequation
c}\label{Q-3}
\end{align}
we can rewrite Equation \eqref{Q=0} as follows:
\begin{eqnarray}
\psi_{\tau}
-\frac{4}{3}\varepsilon\psi_{\tau\eta\eta}+\frac{1}{2}(\psi_{\eta})^2
+\psi_{\eta\eta} -
\frac{4}{3}\varepsilon((\psi_{\eta})^2)_{\eta\eta}= Q({\bf
w}_{\eta\eta}). \label{eq-varphi}
\end{eqnarray}

To get a self-contained equation for the front $\psi$, we have to
give a representation of $Q({\bf w}_{\eta\eta})$ in the right-hand
side of \eqref{eq-varphi}. For this purpose, in the spirit of the
Lyapunov-Schmidt method, we split ${\bf w}(\tau,\cdot)$ ($\tau\in
[0,T]$) along ${\mathscr P}({\mathscr X})$ and $(I-{\mathscr
P})({\mathscr X})$. Writing
\begin{eqnarray*}
{\bf w}=a{\bf U}+\varepsilon{\bf z},
\end{eqnarray*}
and observing that our assumptions on $v$
guarantee that the function ${\bf z}_{\eta\eta}$ belongs to
$(I-{\mathscr P})({\mathscr X})$, we get
\begin{equation}
Q({\bf w}_{\eta\eta})=Q(a_{\eta\eta}{\bf U}+\varepsilon {\bf
z}_{\eta\eta})=a_{\eta\eta}.
\label{Q-w}
\end{equation}
Let us compute $a$ and its derivatives. We use the
relation in \eqref{cond-v-2} to obtain
\begin{eqnarray*}
\frac{1}{3}a+\varepsilon z_1(\cdot,0,\cdot)=(\varepsilon-1)\psi_{\eta\eta}+\varepsilon\psi_{\tau}+\frac{1}{2}\varepsilon(\psi_{\eta})^2.
\end{eqnarray*}
Thus,
\begin{equation}
a_{\eta\eta}=-3\varepsilon D_{\eta\eta}z_1(\cdot,0,\cdot)+3(\varepsilon-1)\psi_{\eta\eta\eta\eta}
+3\varepsilon\psi_{\tau\eta\eta}+\frac{3}{2}\varepsilon((\psi_{\eta})^2)_{\eta\eta}.
\label{eq-A}
\end{equation}
From \eqref{Q-w} and \eqref{eq-A}, it follows that
\begin{eqnarray*}
Q({\bf
w}_{\eta\eta})=-3\varepsilon D_{\eta\eta}z_1(\cdot,0,\cdot)+3(\varepsilon-1)\psi_{\eta\eta\eta\eta}+3\varepsilon\psi_{\tau\eta\eta}
+\frac{3}{2}\varepsilon((\psi_{\eta})^2)_{\eta\eta}.
\end{eqnarray*}
Replacing into \eqref{eq-varphi} we get the following equation for
$\psi$:
\begin{equation}
\psi_{\tau}
-\frac{13}{3}\varepsilon\psi_{\tau\eta\eta}+3(1-\varepsilon)\psi_{\eta\eta\eta\eta}
+\psi_{\eta\eta}+\frac{1}{2}(\psi_{\eta})^2 +
3\varepsilon D_{\eta\eta}z_1(\cdot,0,\cdot)=
\frac{17}{6}\varepsilon((\psi_{\eta})^2)_{\eta\eta}.
\label{parabolic-4order}
\end{equation}
We already see that \eqref{parabolic-4order} reduces to K--S if $\e=0$. However,
we still have $z_1$ in the right-hand side of
\eqref{parabolic-4order}. In the next subsection, we write it in terms of $\psi$.

\subsubsection{The equation for ${\bf z}$}
\label{sub-3.4}
To write $D_{\eta\eta}z_1(\cdot,0,\cdot)$ in terms of the function $\psi$, we determine the
equation satisfied by function ${\bf z}$. Projecting Equation
\eqref{eq-w} along $(I-{\mathscr P})({\mathscr X})$, we see that the
function ${\bf z}(\tau,\cdot)=(I-{\mathscr P}){\bf z}(\tau,\cdot)\in
D(L)$ ($\tau\in [0,T]$) solves the equation
\begin{equation}
L{\bf z}=(I-{\mathscr P})({\bf F}_0) -g(I-{\mathscr P})({\mathscr
L}{\mathscr N}(1))-\varepsilon g_{\eta\eta}(I-{\mathscr
P})({\mathscr N}(1))-\varepsilon{\bf z}_{\eta\eta}. \label{pb-w}
\end{equation}
 From \eqref{Q-1}-\eqref{Q-3} we obtain
\begin{eqnarray*}
&& (I-{\mathscr P})({\bf F}_0)=\left
(\psi_{\eta\eta}-\frac{1}{2}(\psi_{\eta})^2\right )
({\bf T}'-{\bf U}),\no\\[1mm]
&& g_{\eta\eta}(I-{\mathscr P})({\mathscr N}(1))= \left
(\psi_{\tau\eta\eta}+((\psi_{\eta})^2)_{\eta\eta}\right )
\left ({\bf V}-{\bf T}-\frac{4}{3}{\bf U}\right ),\no\\[1mm]
&& g(I-{\mathscr P})({\mathscr L}{\mathscr N}(1))=0,
\end{eqnarray*}
so that we can rewrite Equation \eqref{pb-w} as
\begin{align}
L{\bf z}+\e {\bf z}_{\eta\eta}=&\left
(\psi_{\eta\eta}-\frac{1}{2}(\psi_{\eta})^2\right )({\bf
T}'-{\bf U})\notag\\
&-\varepsilon\left
(\psi_{\tau\eta\eta}+((\psi_{\eta})^2)_{\eta\eta}\right
)\left ({\bf V}-{\bf T}-\frac{4}{3}{\bf U}\right ).
\label{final-w}
\end{align}

We now observe that the operator $L+\e A:=L+\e D_{\eta\eta}$ with domain
\begin{align}
D(L+\eps A)
=&\{{\bf u}\in D(L): {\bf u}_{\eta\eta}\in {\mathscr X},\notag\\
&\;\;\;D^{(j)}_{\eta}u_i(\cdot,-\ell_0/2)=D^{(j)}_{\eta}u_i(\cdot,\ell_0/2),~i=1,2,~j=0,1\Big\},
\label{dom-L-A}
\end{align}
is closable and its closure, denoted by $L_{\e}$, is sectorial and
$0$ is in the resolvent set of the restriction of $L_{\e}$ to
$(I-{\mathscr P})({\mathscr X})$ (see Theorem \ref{thm-generation}).
Hence, we can invert \eqref{final-w} using
$R(0,L_{\varepsilon})=(-L_{\varepsilon})^{-1}$, collecting linear
and nonlinear terms in $\psi$:
\begin{align}
{\bf z}=& R(0,L_{\varepsilon})
\left(-\psi_{\eta\eta}({\bf T}'-{\bf U})
+\varepsilon\psi_{\tau\eta\eta}\left ({\bf V}-{\bf T}-\frac{4}{3}{\bf U}\right)\right)\notag\\
& +R(0,L_{\varepsilon})\left(\frac{1}{2}(\psi_{\eta})^2({\bf T}'-{\bf
U}) +\varepsilon((\psi_{\eta})^2)_{\eta\eta} \left ({\bf V}-{\bf
T}-\frac{4}{3}{\bf U}\right )\right).
\label{eq-z}
\end{align}
\subsubsection{The fourth-order equation for the front}
\label{sub-3.5} Using \eqref{eq-z}, we can compute
$z_1(\cdot,0,\cdot)$ getting
\begin{align*}
z_1(\cdot,0,\cdot)=&-\left(R(0,L_{\varepsilon})
\left [\psi_{\eta\eta}({\bf T}'-{\bf U})\right ]\right)(\cdot,0,\cdot)\\
&+\varepsilon\left (R(0,L_{\varepsilon})\left [
\psi_{\tau\eta\eta}\left ({\bf V}-{\bf T}-\frac{4}{3}{\bf U}\right )\right ]\right )(\cdot,0,\cdot)\\
&+ \left\{R(0,L_{\varepsilon})
\left(\frac{1}{2}(\psi_{\eta})^2({\bf T}'-{\bf U}) +
\varepsilon((\psi_{\eta})^2)_{\eta\eta}\left ({\bf V}-{\bf
T}-\frac{4}{3}{\bf U}\right )\right)\right\}(\cdot,0,\cdot).
\end{align*}
Since $z_1$ is as smooth as $v_1$ is, we can differentiate the
previous formula twice with respect to $\eta$ obtaining
\begin{align}
D_{\eta\eta}z_1(\cdot,0,\cdot)=&-\left
(D_{\eta\eta}R(0,L_{\varepsilon})
\left [\psi_{\eta\eta}({\bf T}'-{\bf U})\right ]\right)(\cdot,0,\cdot)\notag\\
&+\varepsilon \left (D_{\eta\eta}R(0,L_{\varepsilon})\left [
\psi_{\tau\eta\eta}\left ({\bf V}-{\bf T}-\frac{4}{3}{\bf U}\right )\right ]\right )(\cdot,0,\cdot)\notag\\
&+ \frac{1}{2}\left\{D_{\eta\eta}R(0,L_{\varepsilon})
\left((\psi_{\eta})^2({\bf T}'-{\bf U})\right )\right\}(\cdot,0,\cdot)\notag\\
&+\varepsilon\left\{D_{\eta\eta}R(0,L_{\varepsilon})\left (
((\psi_{\eta})^2)_{\eta\eta}\left ({\bf V}-{\bf T}-\frac{4}{3}{\bf
U}\right )\right)\right\}(\cdot,0,\cdot).
\label{z1-0}
\end{align}

Estimate \eqref{estim-R} and our assumptions on $\psi$ (which
guarantee that the function $\psi_{\eta\eta}$ is continuously
differentiable in $[0,T]$ with values in
$C^{\alpha}([-\ell_0/2,\ell_0/2])$, see Remark \ref{rem-smooth})
show that the function $\left (D_{\eta\eta}R(0,L_{\varepsilon})\left
[ \psi_{\eta\eta}\left ({\bf V}-{\bf T}-\frac{4}{3}{\bf U}\right
)\right ]\right )(\cdot,0,\cdot)$ is continuously differentiable in
$[0,T]\times [-\ell_0/2,\ell_0/2]$ with respect to $\tau$ and its
derivative equals the function $\left
(D_{\eta\eta}R(0,L_{\varepsilon})\left [ \psi_{\tau\eta\eta}\left
({\bf V}-{\bf T}-\frac{4}{3}{\bf U}\right )\right ]\right
)(\cdot,0,\cdot)$. Hence, replacing \eqref{z1-0} into
\eqref{parabolic-4order} and taking the above remark into account,
we obtain that the function $\psi$ eventually solves the
fourth-order equation
\begin{align}
&\frac{\partial}{\partial \tau}{\mathscr B_{\e}}\psi
= {\mathscr S}_{\e}\psi +{\mathscr F}_{\e}((\psi_\eta)^2),\label{abstract}
\end{align}
where \setcounter{equation}{17}
\begin{align}
&{\mathscr B}_{\e}\psi=\psi-\frac{13\e}{3}\psi_{\eta\eta} +3\e^2
\left (D_{\eta\eta}R(0,L_{\varepsilon}) \left [\psi_{\eta\eta}\left
({\bf V}-{\bf T}-\frac{4}{3}{\bf U}\right )\right ]\right
)(\cdot,0,\cdot), \tag{\theequation a} \label{B-ve}
\\[2mm]
&{\mathscr S}_{\e}\psi=-3(1-\e)
\psi_{\eta\eta\eta\eta}-\psi_{\eta\eta}+3\e
\left(D_{\eta\eta}R(0,L_{\varepsilon})[ \psi_{\eta\eta}({\bf
T}'-{\bf U})]\right)(\cdot,0,\cdot), \tag{\theequation
b}\label{S-ve}
\\[2mm]
&{\mathscr F}_{\e}(\psi)= -3\e D_{\eta\eta}\left\{
R(0,L_{\varepsilon}) \left(\frac{1}{2}\psi({\bf T}'-{\bf U}) +
\e\psi_{\eta\eta}\left ({\bf V}-{\bf
T}-\frac{4}{3}{\bf U}\right )\right)\right\}(\cdot,0,\cdot)\notag\\
&\qquad\qquad+\frac{17\e}{6}\psi_{\eta\eta}-\frac{1}{2}\psi.
\tag{\theequation c}
\label{F-ve}
\end{align}
Clearly, \eqref{abstract} reduces to K--S when we set $\e=0$.

\subsection{Equivalence between Problem \eqref{eqn:v}-\eqref{cond-v-2} and Equation
\eqref{abstract}}

The following theorem states the equivalence of Problem
\eqref{eqn:v}-\eqref{cond-v-2} and Equation \eqref{abstract}.

\begin{theorem}
\label{thm-equivalence} Fix $\e,T>0$ and $\alpha\in (0,1/2)$.
Further, let $(v,\psi)\in {\mathscr V}_T\times {\mathscr
Y}_T$ be a solution to Problem
\eqref{eqn:v}-\eqref{cond-v-2} $($see Definition $\ref{def-YT})$.
Then, the function $\psi$ turns out to solve Equation
\eqref{abstract}.

Viceversa, if $\psi\in {\mathscr Y}_T$ is a solution to
Equation \eqref{abstract}, then there exists a function $v\in
{\mathscr V}_T$ such that the pair $(v,\psi)$ solves the Cauchy
problem \eqref{eqn:v}-\eqref{cond-v-2}.
\end{theorem}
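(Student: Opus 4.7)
The forward implication is essentially a matter of putting on a rigorous footing the computation performed in Section \ref{sect-self-cons}. My plan is to go through that derivation step by step and argue that the membership $(v,\psi)\in\mathscr{V}_T\times\mathscr{Y}_T$ is precisely what is needed to justify each operation: the elimination of $\psi_\tau$ via \eqref{cond-v-2} in Subsection \ref{sub-3.1}; the lifting ${\bf w}={\bf v}-\e\mathscr{N}(g)$ with $g:=\psi_\tau+(\psi_\eta)^2$ in Subsection \ref{sub-3.2}; the Lyapunov--Schmidt splitting ${\bf w}=a{\bf U}+\e{\bf z}$ in Subsection \ref{sub-3.3}, with $a=Q({\bf w})$ and $\e{\bf z}=(I-\mathscr{P}){\bf w}$; and finally the use of Estimate \eqref{estim-R} together with Remark \ref{rem-smooth} (to commute $D_\tau$ with $D_{\eta\eta}$) in the derivation of \eqref{z1-0}. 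The outcome is Equation \eqref{abstract}.

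For the converse implication, I would reconstruct $v$ in reverse order. Starting from $\psi\in\mathscr{Y}_T$ solving \eqref{abstract}, I set $g:=\psi_\tau+(\psi_\eta)^2$ and define ${\bf z}(\tau,\cdot)\in D(L_\e)\cap(I-\mathscr{P})(\mathscr{X})$ by the explicit formula \eqref{eq-z}. Theorem \ref{thm-generation} combined with Estimate \eqref{estim-R} ensures that $R(0,L_\e)$ maps $(I-\mathscr{P})(\mathscr{X})$ boundedly into $D(L_\e)$, so that ${\bf z}$ is well-defined and inherits the needed regularity from $\psi$. I then define a scalar function $a(\tau,\eta)$ by
\begin{eqnarray*}
\frac{1}{3}a+\e z_1(\cdot,0,\cdot)=(\e-1)\psi_{\eta\eta}+\e\psi_\tau+\frac{1}{2}\e(\psi_\eta)^2,
\end{eqnarray*}
set ${\bf w}:=a{\bf U}+\e{\bf z}$ and ${\bf v}:={\bf w}+\e\mathscr{N}(g)$, and finally recover $v$ by gluing the two components of ${\bf v}$, the continuity at $x=0$ following from ${\bf z}\in D(L_\e)$ and from the matching values at $x=0$ of ${\bf U}$ and of $\mathscr{N}(g)$.

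It then remains to verify that $(v,\psi)$ so constructed actually solves \eqref{eqn:v}--\eqref{cond-v-2}. The boundary condition \eqref{cond-v-2} reduces to the defining relation for $a$ displayed above. The jump condition \eqref{cond-v-1} comes from the jump $[(\mathscr{N}(g))_x]=g$ built into the definition of $\mathscr{N}$, combined with the continuity of $D_x$ at $x=0$ for both $a{\bf U}$ and $\e{\bf z}$ (which is part of membership in $D(L)$). Finally, the PDE \eqref{eqn:v} is equivalent to \eqref{Lw=0}: decomposing along $\mathscr{P}(\mathscr{X})$ and $(I-\mathscr{P})(\mathscr{X})$, the $(I-\mathscr{P})$-part coincides with \eqref{final-w} by the very definition \eqref{eq-z} of ${\bf z}$, while the $\mathscr{P}$-part, after applying the functional $Q$ and using \eqref{Q-1}--\eqref{Q-3} together with $Q({\bf z}_{\eta\eta})=0$, is precisely Equation \eqref{abstract}, which $\psi$ satisfies by hypothesis.

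The main obstacle will be the regularity bookkeeping in the backward direction: one must check that the reconstructed ${\bf v}$ lies in $\mathscr{V}_T$, i.e.\ that $e^{-x/2}D^{(i)}_x v$ and $e^{-x/2}D^{(i)}_\eta v$ are bounded on $[0,T]\times I_\pm$ for $i=0,1,2$. This amounts to a careful analysis of the regularizing properties of $R(0,L_\e)$ on data of the form $f({\bf T}'-{\bf U})$ and $f({\bf V}-{\bf T}-\frac{4}{3}{\bf U})$ with $f$ drawn from $\psi_{\eta\eta}$, $\psi_{\tau\eta\eta}$, $(\psi_\eta)^2$ and $((\psi_\eta)^2)_{\eta\eta}$; the assumption $\psi\in\mathscr{Y}_T$ (which in particular gives $\psi_{\tau\eta\eta}\in C^{0,\alpha}$) combined with Estimate \eqref{estim-R} is precisely what is tailored to close this argument.
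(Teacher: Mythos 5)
Your proposal mirrors the paper's own proof: the forward direction is simply a rigorous reading of the computations in Section \ref{sect-self-cons}, and for the converse you reconstruct ${\bf z}$ from \eqref{eq-z}, $a$ from \eqref{def-a}, set ${\bf v}=a{\bf U}+\e{\bf z}+\e\mathscr{N}(g)$, and verify the boundary conditions and the PDE by splitting along $\mathscr{P}(\mathscr{X})$ and $(I-\mathscr{P})(\mathscr{X})$, exactly as the authors do. The only detail you flag but do not carry out — checking ${\bf v}\in\mathscr{V}_T$, in particular the joint continuity of the spatial derivatives of $z_1,z_2$ — is handled in the paper by an interpolation estimate showing that $\psi_{\eta\eta}$, $((\psi_\eta)^2)_{\eta\eta}$, $\psi_{\tau\eta\eta}$ are continuous in $\tau$ with values in $C^\theta_\sharp$ for $\theta<\alpha$, so you should be aware that finishing your ``regularity bookkeeping'' requires this extra step rather than a direct application of \eqref{estim-R} to $C^\alpha_\sharp$-boundedness alone.
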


\begin{proof}
In view of the arguments in Subsection \ref{sect-self-cons}, we just
need to show that to any solution $\psi\in {\mathscr Y}_T$ to
Equation \eqref{abstract} there corresponds a unique function $v\in
{\mathscr V}_T$ such that the pair $(v,\psi)$ solves Problem
\eqref{eqn:v}-\eqref{cond-v-2}. For this purpose, let ${\bf z}$ be
defined by \eqref{eq-z}. By assumptions, the functions
$\psi_{\eta\eta}$, $\psi_{\eta}^2$, $((\psi_{\eta})^2)_{\eta\eta}$
and $\psi_{\tau\eta\eta}$ are bounded in $[0,T]$ with values in the
space $C_{\sharp}^{\alpha}$. Moreover, the functions ${\bf T}'-{\bf
U}$ and ${\bf V}-{\bf T}-\frac{4}{3}{\bf U}$ are in $(I-{\mathscr
P})({\mathscr X})$. Hence, we can apply Theorem
\ref{thm-generation}(iv) and conclude that ${\bf z}(\tau,\cdot)$ is
in $D(L+\e A)$ (see \eqref{dom-L-A}) for any $\tau\in [0,T]$.

Clearly, the components $z_1$ and $z_2$ of ${\bf z}$ are continuous
in $[0,T]\times I_-$ and in $[0,T]\times I_+$, respectively. Let us
show that also the spatial derivatives (up to the second order) of
the functions $z_1$ and $z_2$ are continuous in $[0,T]\times I_-$
and $[0,T]\times I_+$. This follows from the estimate
\eqref{estim-R} provided one shows that the functions
$\psi_{\eta\eta}$, $((\psi_{\eta})^2)_{\eta\eta}$ and
$\psi_{\tau\eta\eta}$ are continuous in $[0,T]$ with values in
$C_{\sharp}^{\theta}$ for some $\theta\in (0,\alpha)$. Such a
property can be proved using an interpolation argument. Indeed, it
is well-known that, for any $\theta\in (0,\alpha)$, there exists a
positive constant $C$ such that
\begin{eqnarray*}
\|\psi\|_{C^{\theta}([-\ell_0/2,\ell_0/2])}\le
C\|\psi\|_{C([-\ell_0/2,\ell_0/2])}^{1-\theta/\alpha}\|\psi\|_{C^{\alpha}([-\ell_0/2,\ell_0/2])}^{\theta/\alpha},
\end{eqnarray*}
for any $\psi\in C^{\alpha}([-\ell_0/2,\ell_0/2])$ (see e.g.,
\cite{triebel}). Applying this estimate to the function
$\psi_{\eta\eta}(\tau_2,\cdot)-\psi_{\eta\eta}(\tau_1,\cdot)$, with
$\tau_1,\tau_2\in [0,T]$, shows that $\psi_{\eta\eta}$ is continuous
in $[0,T]$ with values in $C^{\theta}([-\ell_0/2,\ell_0/2])$ (and,
hence, in $C_{\sharp}^{\theta}$), for any $\theta\in (0,\alpha)$.
The same argument shows that the functions $\psi_{\eta}^2$,
$((\psi_{\eta})^2)_{\eta\eta}$ and $\psi_{\tau\eta\eta}$ are
continuous in $[0,T]$ with values in $C_{\sharp}^{\theta}$ as well.
Finally, since ${\bf z}(\tau,\cdot)$ belongs to $D(L+\e A)$ for any
$\tau\in [0,T]$, the functions $(\tau,x,\eta)\mapsto
e^{-\frac{x}{2}}D^{(i)}_xz_1(\tau,x,\eta)$ and $(\tau,x,\eta)\mapsto
e^{-\frac{x}{2}}D^{(i)}_{\eta}z_2(\tau,x,\eta)$ are bounded in
$[0,T]\times I_-$ and in $[0,T]\times I_+$, respectively, for any
$i=0,1,2$.

The function ${\bf z}$ will represent the component along
$(I-{\mathscr P})({\mathscr X})$ of the function ${\bf
v}-\varepsilon{\mathscr N}(\psi_{\tau}+(\psi_y)^2)$, where
$v_1(\cdot,x,\cdot)=v(\cdot,x,\cdot)\chi_{(-\infty,0]}(x)$,
$v_2(\cdot,x,\cdot)=v(\cdot,x,\cdot)\chi_{[0,+\infty)}(x)$ and $v$
is the solution to Problem \eqref{eqn:v}--\eqref{cond-v-2} we are
looking for. The computations in Subsection \ref{sect-self-cons}
suggest to set ${\bf v}:={\bf w}+\varepsilon{\mathscr
N}(\psi_{\tau}+(\psi_y)^2):=a{\bf U}+\varepsilon{\bf
z}+\varepsilon{\mathscr N}(\psi_{\tau}+(\psi_y)^2)$, where
\begin{equation}
a=-3\varepsilon
z_1(\cdot,0,\cdot)+3(\varepsilon-1)\psi_{\eta\eta}
+3\varepsilon\psi_{\tau}+\frac{3}{2}\varepsilon(\psi_{\eta})^2.
\label{def-a}
\end{equation}
Using Formulae \eqref{Q-1}-\eqref{Q-3} and \eqref{z1-0} we can show
that
\begin{eqnarray*}
{\mathscr P}(\e {\bf F}_0 -\varepsilon {\bf w}_{\eta\eta}
-\varepsilon {\mathscr L}{\mathscr N}(\psi_{\tau}+(\psi_{\eta})^2)
-\varepsilon^2{\mathscr
N}((\psi_{\tau}+(\psi_{\eta})^2)_{\eta\eta}))=0.
\end{eqnarray*}
Hence, the function ${\bf v}$ solves the equation
\begin{align*}
{\mathscr L}{\bf v}=&
L(a{\bf U})+\e L{\bf z}+\varepsilon{\mathscr L}{\mathscr N}(\psi_{\tau}+(\psi_y)^2)\\
=&(I-{\mathscr P})\{\e {\bf F}_0 -\varepsilon {\bf w}_{\eta\eta}
-\varepsilon {\mathscr L}{\mathscr N}(\psi_{\tau}+(\psi_{\eta})^2)-\varepsilon^2{\mathscr N}((\psi_{\tau}+(\psi_{\eta})^2)_{\eta\eta})\}\\
&+\e{\mathscr L}({\mathscr N}(\psi_{\tau}+(\psi_y)^2))\\
=&\e {\bf F}_0 -\varepsilon {\bf w}_{\eta\eta}
-\varepsilon^2{\mathscr N}((\psi_{\tau}+(\psi_{\eta})^2)_{\eta\eta})\\
=&\e {\bf F}_0-\e {\bf v}_{\eta\eta}.
\end{align*}
Moreover, it is easy to check that ${\bf v}$ satisfies also the
boundary conditions of the Cauchy problem \eqref{pb-fin-u}.

Clearly, the function $v$ defined above belongs to ${\mathscr V}_T$
and the pair $(v,\psi)$ solves the differential equation
\eqref{eqn:v}. Using the second boundary condition in
\eqref{pb-fin-u}, it follows immediately that $(v,\psi)$ satisfies
condition \eqref{cond-v-1}. Finally, to check condition
\eqref{cond-v-2} it suffices to use \eqref{def-a}, recalling that
${\mathscr N}(\psi_{\tau}+(\psi_{\eta})^3)$ vanishes when $\eta=0$.
This completes the proof.
\end{proof}

\subsection{The equation for the remainder}
\label{sect:4-3} In view of Theorem \ref{thm-equivalence}, in the
rest of the paper we deal only with Equation \eqref{abstract} with
periodic boundary conditions.
 To begin with, we recall the following
result about K--S:
\begin{theorem}
\label{thm-4.1} Let $\Phi_0\in C_{\sharp}^{6+\alpha}$ for some
$\alpha\in (0,1/2)$. Then, the Cauchy problem
\begin{equation}
\left\{
\begin{array}{lll}
\Phi_\tau(\tau,\eta)= -3\Phi_{\eta\eta\eta\eta}(\tau,\eta)-
\Phi_{\eta\eta}(\tau,\eta)
- \frac{1}{2}(\Phi_\eta(\tau,\eta))^2, &\tau\ge 0, & |\eta|\le\frac{\ell_0}{2},\\[2mm]
D_{\eta}^k\Phi(\tau,-\ell_0/2)=D_{\eta}^k\Phi(\tau,\ell_0/2), &\tau\ge 0, & k=0,1,2,3,\\[2mm]
\Phi(0,\eta)=\Phi_0(\eta), &&|\eta|\le \frac{\ell_0}{2},
\end{array}
\right. \label{4order}
\end{equation}
admits a unique solution $\Phi\in C^{1,4}([0,+\infty)\times
[-\ell_0/2,\ell_0/2])$. In fact, $\Phi\in {\mathscr Y}_T$
for any $T>0$.
\end{theorem}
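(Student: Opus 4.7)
The plan is to treat \eqref{4order} as a semilinear parabolic equation with sectorial leading operator $3\partial_\eta^4$, establish local-in-time existence in H\"older spaces via a contraction argument on the mild formulation, and then globalize using the classical a priori bounds for K--S on a periodic interval.

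\emph{Local existence.} I would rewrite \eqref{4order} as $\Phi_\tau + A\Phi = F(\Phi)$ with $A\Phi := 3\Phi_{\eta\eta\eta\eta} + \Phi_{\eta\eta}$ realized on $C_\sharp^\alpha$ (on the scale of periodic little-H\"older spaces) and $F(\Phi):=-\tfrac{1}{2}(\Phi_\eta)^2$. Since the leading term $3\partial_\eta^4$ is self-adjoint and non-negative under periodic boundary conditions, $-A$ generates an analytic semigroup on $C_\sharp^\theta$, and one has parabolic smoothing of the form $\|e^{-\tau A}f\|_{C_\sharp^{\theta+4\sigma}}\le C\tau^{-\sigma}\|f\|_{C_\sharp^\theta}$ for $\sigma>0$. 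A standard contraction in $C([0,\tau_0];C_\sharp^{6+\alpha})$ applied to
\begin{equation*}
\Phi(\tau) = e^{-\tau A}\Phi_0 + \int_0^\tau e^{-(\tau-s)A}F(\Phi(s))\,ds,
\end{equation*}
using the local Lipschitz estimate $\|F(\Phi)-F(\Psi)\|_{C_\sharp^{\alpha}} \le C(R)\|\Phi-\Psi\|_{C_\sharp^{6+\alpha}}$ on balls of radius $R$, provides a unique local solution on some interval $[0,\tau_0]$ with $\tau_0$ depending only on $\|\Phi_0\|_{C_\sharp^{6+\alpha}}$. Differentiating the mild formula in $\tau$ and invoking the analytic semigroup estimates gives $\Phi_\tau \in C([0,\tau_0];C_\sharp^{2+\alpha})$, hence $\Phi\in \mathscr{Y}_{\tau_0}$. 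Periodicity up to order $6$ is preserved at each step because both $e^{-\tau A}$ and $F$ leave the closed subspace $C_\sharp^{6+\alpha}$ invariant.

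\emph{Global existence.} The hard step is to rule out blow-up and extend to arbitrary $[0,T]$. Setting $u := \Phi_\eta$, the function $u$ satisfies the derivative form
\begin{equation*}
u_\tau + 3u_{\eta\eta\eta\eta} + u_{\eta\eta} + u u_\eta = 0
\end{equation*}
with periodic boundary conditions. The classical Nicolaenko--Scheurer--Temam method (see \cite{temam}) yields an absorbing ball in $L^2_\sharp$: one tests against $u$, uses the cancellation $\int u^2 u_\eta\,d\eta = 0$ by periodicity, and controls the destabilizing term $\|u_\eta\|_{L^2}^2$ via an interpolation inequality against the coercive fourth-order dissipation $3\|u_{\eta\eta}\|_{L^2}^2$, the dependence on the period $\ell_0$ being explicit. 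This provides a uniform bound $\|u(\tau,\cdot)\|_{L^2}\le K(\ell_0)$ on $[0,T]$. Bootstrapping via standard higher-order energy estimates in $H^s_\sharp$ and the analytic semigroup representation of the mild formula then yields uniform bounds $\|\Phi(\tau,\cdot)\|_{C^{6+\alpha}_\sharp}\le K_T$ for $\tau \in [0,T]$.

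\emph{Conclusion.} The uniform H\"older bound allows one to iterate the local result from Step~1 on successive intervals of fixed length, thereby extending the solution to the whole of $[0,T]$ for any prescribed $T>0$. Applying the regularity argument of Step~1 on each such subinterval gives $\Phi\in \mathscr{Y}_T$ for every $T>0$, and uniqueness follows from a Gronwall argument applied to the difference of two solutions tested in $L^2$ (exploiting again the cancellation of the transport-like term). The main obstacle is the global a priori estimate for the derivative equation, which is the well-known delicate point in K--S theory and relies intrinsically on the integral cancellation $\int u^2 u_\eta\,d\eta = 0$ in the periodic setting.
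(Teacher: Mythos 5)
Your overall strategy matches the paper's: local existence via analytic-semigroup theory for the sectorial fourth-order operator on periodic H\"older spaces, followed by an $L^2$ energy bound on $\Phi_\eta$ that exploits the cancellation $\int \Phi_\eta^2\Phi_{\eta\eta}\,d\eta=0$ to rule out blow-up, and a bootstrap back to the ${\mathscr Y}_T$ regularity. Three points of difference are worth noting. First, the paper does the contraction in $C([0,T_0];D_B(\alpha,\infty))=C([0,T_0];C_\sharp^{4\alpha})$ with $\alpha\in(1/4,1/2)$, i.e.\ at low regularity, and then bootstraps; as written, your contraction in $C([0,\tau_0];C_\sharp^{6+\alpha})$ with the Lipschitz target $C_\sharp^{\alpha}$ will not close, because six orders of parabolic smoothing for a fourth-order generator produce the non-integrable singularity $(\tau-s)^{-3/2}$ in the Duhamel integral. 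You need the sharper but equally available bound $\|F(\Phi)-F(\Psi)\|_{C_\sharp^{5+\alpha}}\le C(R)\|\Phi-\Psi\|_{C_\sharp^{6+\alpha}}$ (which loses only one derivative, hence $(\tau-s)^{-1/4}$), or first contract at lower order as the paper does. Second, for the $L^2$ bound the paper uses Tadmor's time-rescaling trick $v=e^{-2\tau}\Phi_\eta$, which yields the explicit growth bound $\|\Phi_\eta(\tau)\|_2\le e^{13\tau/6}\|D_\eta\Phi_0\|_2$ by a one-line energy identity; you invoke the Nicolaenko--Scheurer--Temam absorbing-ball argument. Either suffices for a fixed finite horizon $T$, though be aware the NST argument gives the absorbing bound only after a transient, whereas what you actually need here is merely $\sup_{[0,T]}\|\Phi_\eta\|_2<\infty$, which already follows from the raw differential inequality without any background-flow shift. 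Third, differentiating the equation loses control of the spatial mean of $\Phi$; the paper recovers it separately (their Step 3) by integrating the equation over a period and using the $L^2$ bound on $\Phi_\eta$ to bound $\frac{d}{d\tau}\Pi(\Phi(\tau,\cdot))=-\frac{1}{2\ell_0}\Pi((\Phi_\eta)^2)$. Your bootstrap to $\|\Phi\|_{C_\sharp^{6+\alpha}}$ silently assumes this; it should be stated explicitly, since without it you only bound $\Phi$ up to an unknown time-dependent additive constant.
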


Most of the literature is about the differentiated version of K--S. For this reason and the
reader's convenience, we provide a full proof of Theorem \ref{thm-4.1} in the appendix.

According to the {\it Ansatz}, we split
\begin{eqnarray*}
\psi= \Phi+ \e\rho_\e,
\end{eqnarray*}
which defines the remainder $\rho_{\e}$. To avoid cumbersome
notation, we simply write $\rho$ for $\rho_\e$. From Theorem
\ref{thm-4.1} we know that $\rho\in {\mathscr Y}_T$ (see Definition
\ref{def-YT}) and it solves the equation
\begin{equation}
\label{eqn:rho} \frac{\partial}{\partial \tau}{\mathscr
B}_{\e}(\rho) = {\mathscr S}_{\e}(\rho) -\Phi_\eta\rho_\eta
-\frac{\e}{2}(\rho_\eta)^2 + {\mathscr
G}_{\e}((\Phi_{\eta}+\e\rho_{\eta})^2) +{\mathscr H}_{\e}(\Phi),
\end{equation}
where
\begin{align*}
{\mathscr G}_{\e}(\xi) =&
\frac{17}{6}\xi_{\eta\eta}-3\left\{D_{\eta\eta} R(0,L_{\varepsilon})
\left(\frac{1}{2}\xi({\bf T}'-{\bf U}) + \e\xi_{\eta\eta}\left ({\bf
V}-{\bf T}-\frac{4}{3}{\bf U}\right )\right)\right\}(\cdot,0,\cdot);\\[1mm]
{\mathscr H}_{\e}(\Phi)=& 3\Phi_{\eta\eta\eta\eta}
+3\left(D_{\eta\eta}R(0,L_{\varepsilon})[ \Phi_{\eta\eta}({\bf
T}'-{\bf U})]\right)(\cdot,0,\cdot)
+\frac{13}{3}\Phi_{\tau\eta\eta}\\
&-3\e\left (D_{\eta\eta}R(0,L_{\varepsilon}) \left [\Phi_{\tau\eta\eta}\left
({\bf V}-{\bf T}-\frac{4}{3}{\bf U}\right )\right ]\right
)(\cdot,0,\cdot).
\end{align*}

Equation \eqref{eqn:rho} on $[-\ell_0/2, \ell_0/2]$ is supplemented
by periodic boundary conditions and by an initial condition $\rho_0$
at $\tau=0$. For simplicity, to avoid lengthly computations, we take
hereafter $\rho_0=0$, namely, $\psi(0,\cdot)=\Phi(0,\cdot)=\Phi_0$.
In other words, the front $\psi$ and the solution of K--S start from
the same configuration, which is physically reasonable. More general
{\it compatible} initial data can be considered as in
\cite{BFHS,BFHLS}.

\section{Local in time solvability of Equation \eqref{eqn:rho}}
\setcounter{equation}{0} \label{sect:AP} As it has been remarked in
the introduction, except for small $\ell_0$, where the TW is stable,
global existence of $\rho$ is not granted.

In this section, we prove the following local in time existence and uniqueness result.

\begin{theorem}
\label{thm-exist-eps} For any $\e\in (0,1/2]$, there exist
$T_{\e}>0$ and a unique solution $\rho$ to Equation \eqref{eqn:rho}
which belongs to ${\mathscr Y}_{T_{\e}}$ $($see Definition
$\ref{def-YT})$ and vanishes at $\tau=0$.
\end{theorem}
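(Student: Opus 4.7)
The plan is to fix $\e\in(0,1/2]$ and reduce \eqref{eqn:rho} to a semilinear abstract Cauchy problem
\begin{equation*}
\rho_\tau = A_\e\rho + N_\e(\tau,\rho), \qquad \rho(0,\cdot)=0,
\end{equation*}
posed in a scale of periodic Hölder spaces, and then close a standard Banach fixed point argument on a sufficiently short time interval $[0,T_\e]$. The first step is to invert the mass-type operator $\mathscr{B}_\e$ defined in \eqref{B-ve}. Writing $\mathscr{B}_\e = I - \tfrac{13\e}{3} D_{\eta\eta} + \e^2 K_\e$, where $K_\e$ denotes the pseudodifferential tail appearing in \eqref{B-ve}, the leading part $I - \tfrac{13\e}{3}D_{\eta\eta}$ is a positive second-order periodic elliptic operator, hence an isomorphism from $C_\sharp^{2+\alpha}$ onto $C_\sharp^{\alpha}$, while $K_\e$ is bounded between these spaces by \eqref{estim-R}. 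A Neumann series therefore yields $\mathscr{B}_\e^{-1}$ as a bounded operator from $C_\sharp^{\alpha}$ into $C_\sharp^{2+\alpha}$ for every fixed $\e\in(0,1/2]$. Since $\mathscr{B}_\e$ is time-independent, $\partial_\tau\mathscr{B}_\e\rho = \mathscr{B}_\e\rho_\tau$, and applying $\mathscr{B}_\e^{-1}$ to \eqref{eqn:rho} recasts the equation in the advertised form, with $A_\e:=\mathscr{B}_\e^{-1}\mathscr{S}_\e$ and $N_\e$ bundling the quadratic terms $-\Phi_\eta\rho_\eta - \tfrac{\e}{2}(\rho_\eta)^2$, the driving $\mathscr{G}_\e((\Phi_\eta+\e\rho_\eta)^2)$, and the source $\mathscr{H}_\e(\Phi)$, all composed with $\mathscr{B}_\e^{-1}$.

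Next I would verify that $A_\e$ generates an analytic semigroup on $C_\sharp^{2+\alpha}$ with domain $C_\sharp^{4+\alpha}$. Modulo the lower-order pseudodifferential correction in \eqref{S-ve} coming from $3\e(D_{\eta\eta}R(0,L_\e)[\,\cdot\,(\mathbf{T}'-\mathbf{U})])(\cdot,0,\cdot)$, which is relatively bounded by Theorem \ref{thm-generation} and \eqref{estim-R}, $\mathscr{S}_\e$ is a perturbation of the K--S linear operator $-3D_{\eta\eta\eta\eta}-D_{\eta\eta}$, whose sectoriality on periodic Hölder scales is classical; composing with the elliptic isomorphism $\mathscr{B}_\e^{-1}$ then yields a sectorial $A_\e$. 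The map $N_\e$ is polynomial of degree two in $\rho_\eta$ with coefficients built from the K--S solution $\Phi$, which is smooth by Theorem \ref{thm-4.1}; after composition with $\mathscr{B}_\e^{-1}$ and the smoothing operators $D_{\eta\eta}R(0,L_\e)$ it is locally Lipschitz from $C_\sharp^{4+\alpha}$ into $C_\sharp^{2+\alpha}$, uniformly in $\tau\in[0,T]$ for any $T>0$. A standard contraction in the variation-of-constants formula for the semigroup $\{e^{\tau A_\e}\}_{\tau\ge 0}$ then produces a unique mild solution $\rho$ on some $[0,T_\e]$ satisfying $\rho(0,\cdot)=0$.

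The main technical hurdle is to upgrade this abstract mild solution to a function in $\mathscr{Y}_{T_\e}$ in the sense of Definition \ref{def-YT}: one needs $\rho\in C^{1,4}([0,T_\e]\times[-\ell_0/2,\ell_0/2])$ together with $\rho_\tau\in C^{0,2+\alpha}$ \emph{up to} $\tau=0$. Since $\rho_0\equiv 0$ and $\Phi(0,\cdot)\in C_\sharp^{6+\alpha}$ by the assumption on the initial data, the right-hand side of \eqref{eqn:rho} evaluated at $\tau=0$ already lies in $C_\sharp^{2+\alpha}$, so the compatibility condition demanded by parabolic Schauder theory is met. Combined with the Hölder continuity in $\tau$ of the coefficients inherited from $\Phi\in\mathscr{Y}_T$, Schauder estimates for the semigroup generated by $A_\e$ then propagate the desired $C^{0,2+\alpha}$ regularity of $\rho_\tau$ all the way to the initial time. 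A subsidiary difficulty, namely the nonlocality of the pseudodifferential operators built from $R(0,L_\e)$ and the trace at $x=0$, is handled once and for all by Theorem \ref{thm-generation} and \eqref{estim-R}, so no new ingredient is needed beyond what the paper has already assembled.
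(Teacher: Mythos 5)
Your overall plan---invert $\mathscr{B}_\e$, recast \eqref{eqn:rho} as a semilinear Cauchy problem driven by a sectorial operator, prove local existence by contraction and then upgrade regularity---is indeed the paper's strategy. But two of its load-bearing steps, as you have written them, do not work, and they are exactly the places where the paper has to do real work.

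First, the inversion of $\mathscr{B}_\e$. You decompose $\mathscr{B}_\e = (I - \tfrac{13\e}{3}D_{\eta\eta}) + \e^2 K_\e$ and treat $\e^2 K_\e$ as a bounded, hence Neumann-summable, perturbation of the second-order elliptic leading part. This decomposition is not a ``leading part plus lower-order tail.'' The pseudodifferential term $\e^2 K_\e$ carries the factor $D_{\eta\eta}$ \emph{outside} $R(0,L_\e)$ acting on $\psi_{\eta\eta}$, and the trace-gain encoded in \eqref{u1k} only recovers an $X_{\e,k}^{-3}\sim (\e\lambda_k)^{-3/2}$ decay; balancing the powers, the symbol of $\e^2 K_\e$ grows like $\lambda_k$ as $k\to\infty$, i.e.\ it is itself second order. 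Indeed the full symbol of $\mathscr{B}_\e$ is $b_{\e,k}\sim 3\e\lambda_k$ (Lemma \ref{operatorB-1}), not $\sim\tfrac{13\e}{3}\lambda_k$, so the residue after subtracting your ``leading part'' has leading symbol $-\tfrac{4}{3}\e\lambda_k$. A Neumann series against $I - \tfrac{13\e}{3}D_{\eta\eta}$ therefore has no reason to converge: you are not inverting against a small perturbation, and \eqref{estim-R} does not bound $K_\e$ between $C_\sharp^{2+\alpha}$ and $C_\sharp^{\alpha}$ with a constant that you control. What the paper does instead (Lemma \ref{operatorB-1} plus Proposition \ref{prop-inv-B}) is compute the \emph{exact} multiplier $b_{\e,k}$, observe it never vanishes, and split $\mathscr{B}_\e = -3\e D_{\eta\eta} + \overline{\mathscr{B}}_\e$ with $\overline{\mathscr{B}}_\e$ of order $1/2$. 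That is the splitting in which the tail really is lower order; yours is not.

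Second, sectoriality and the function-space bookkeeping for $A_\e = \mathscr{B}_\e^{-1}\mathscr{S}_\e$. Asserting that $\mathscr{S}_\e$ is a perturbation of the fourth-order K--S operator and then saying ``composing with the elliptic isomorphism $\mathscr{B}_\e^{-1}$ yields a sectorial $A_\e$'' is not an argument: sectoriality does not pass through composition with an (order-reducing) isomorphism without a proof. The paper again resolves this by explicit symbol computation, showing that $r_k\sim(1-\e^{-1})\lambda_k<0$ (Proposition \ref{prop-5.4}), so the inverted operator is in fact a \emph{second-order} parabolic generator (with leading part $\tfrac{1-\e}{\e}D_{\eta\eta}$), realized on $C([-\ell_0/2,\ell_0/2])$ with domain $C_\sharp^1\cap C^2$. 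This drop from fourth to second order is the structural point you never surface, and it is what makes the subsequent semilinear setup painless: the nonlinearity $\mathscr{K}_\e(\tau,\cdot)$ acts on $\rho_\eta$ (one derivative) and is shown in Proposition \ref{prop-H} to map $C_\sharp^s$ into $C_\sharp^s$ without derivative loss, so the right-hand side loses one derivative against a linear part losing two. Your version claims local Lipschitzness of $N_\e$ only from $D(A_\e)=C_\sharp^{4+\alpha}$ into $C_\sharp^{2+\alpha}$, i.e.\ from the full domain, which is exactly the quasilinear borderline where the naive contraction in the variation-of-constants formula does \emph{not} close (the singular integrand fails to be integrable in $D(A_\e)$). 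The estimate is recoverable---the quadratic term really loses only one derivative---but you have to prove it, and as written your step does not.

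Finally, your last paragraph about pushing regularity up to $\mathscr{Y}_{T_\e}$ via parabolic smoothing and a compatibility check at $\tau=0$ is in the right spirit; the paper does this by a bootstrap (Subsection \ref{subsect-proof}) based on Lunardi's Theorems 4.3.8/4.3.9, and once the second-order reduction is in hand your sketch would go through similarly. The gaps to close are therefore the two items above: replace the Neumann-series inversion of $\mathscr{B}_\e$ by a genuine symbol analysis, and identify (and exploit) the order reduction induced by $\mathscr{B}_\e^{-1}$ so that the fixed-point argument lives strictly inside the semilinear regime.
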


The proof is rather long and needs many preliminary results.
For this reason, we split it in several
steps. Before entering the details, we sketch here the strategy of
the proof.

As a first step, for any fixed $\e>0$, we transform Equation
\eqref{eqn:rho} into a semilinear equation associated with a
sectorial operator. Employing classical tools from the theory of
analytic semigroups we prove that such a semilinear equation admits
a unique solution $\rho=\rho_{\e}$ defined in some time domain
$[0,T_{\e}]$, which vanishes at $\tau=0$. Using some bootstrap
arguments, we then regularize $\rho$, showing that it actually
belongs to ${\mathscr Y}_{T_{\e}}$. These regularity properties of
$\rho$ allow us to show that it is in fact a solution to Equation
\eqref{eqn:rho}.

\subsection{The semilinear equation}

In this subsection, we show that we can transform Equation
\eqref{eqn:rho} into a semilinear equation associated with a second
order elliptic operator. We obtain it inverting the operator
${\mathscr B}_{\e}$ in \eqref{B-ve}, i.e., the operator defined by
\begin{align*}
{\mathscr B}_{\e}\psi=\psi-\frac{13\e}{3}\psi_{\eta\eta} +3\e^2
\left (D_{\eta\eta}R(0,L_{\varepsilon}) \left [\psi_{\eta\eta}\left
({\bf V}-{\bf T}-\frac{4}{3}{\bf U}\right )\right ]\right
)(\cdot,0,\cdot).
\end{align*}
By Theorem \ref{thm-generation} and the results in the proof of
Theorem \ref{thm-equivalence}, we know that the operator ${\mathscr
B}_{\e}$ is well-defined in $C_{\sharp}^{2+\theta}$ for any
$\theta\in (0,1)$. We will show that ${\mathscr B}_{\e}$ can be
extended to the whole of $C_{\sharp}^2$ with an operator which is
invertible. For this purpose, we compute the symbol of the operator
${\mathscr B}_{\e}$.

Throughout the section, given a function $f:J\times [-\ell_0/2,\ell_0/2]\to\R$, where
$J\subset\R$ is an interval, we denote by $\hat f(x,k)$ the $k$-th Fourier coefficient
of the function $f(x,\cdot)$. Moreover, we set
\begin{equation}
X_{\e,k}=\sqrt{1+4\e\lambda_k},\qquad\;\,k\in\N\cup\{0\}.
\label{Xk}
\end{equation}
\begin{lemma}
\label{operatorB-1} Fix $\e\in (0,1/2]$. Then, the $k$-th Fourier
multiplier $b_{\e,k}$ of the operator ${\mathscr B}_{\e}$ is given
by
\begin{equation}
b_{\e,k}=\frac{3}{4}\frac{(X_{\e,k}+1)(X_{\e,k}^2+2X_{\e,k}-1)}{X_{\e,k}+2}
\sim 3\e\lambda_k\qquad (k\to +\infty). \label{bek}
\end{equation}
\end{lemma}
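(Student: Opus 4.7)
The plan is to diagonalize $\mathscr{B}_{\e}$ on the Fourier basis $\{w_{k}\}$ by exploiting a product structure of the resolvent $R(0,L_{\e})$. The key observation is that since $L_{\e}=L+\e A$ and $Aw_{k}=-\lambda_{k}w_{k}$, for every vector-valued function $\mathbf{h}(x)$ of the longitudinal variable alone one has
$$L_{\e}[w_{k}\mathbf{h}] = w_{k}\cdot (L-\e\lambda_{k}I)\mathbf{h},$$
and hence, by inversion,
$$R(0,L_{\e})[w_{k}\mathbf{h}] = w_{k}\cdot R(\e\lambda_{k},L)\mathbf{h}.$$
Applying $\mathscr{B}_{\e}$ to $w_{k}$ and specialising the identity to $\mathbf{h}=\mathbf{V}-\mathbf{T}-\frac{4}{3}\mathbf{U}$, the outer $D_{\eta\eta}$ supplies an extra factor $-\lambda_{k}$, so collecting the three contributions from \eqref{B-ve} yields
$$b_{\e,k} = 1+\frac{13\e\lambda_{k}}{3}+3\e^{2}\lambda_{k}^{2}\,\Bigl[R(\e\lambda_{k},L)\Bigl(\mathbf{V}-\mathbf{T}-\tfrac{4}{3}\mathbf{U}\Bigr)\Bigr]_{1}(0).$$

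The crux is then the explicit evaluation of this boundary value. Setting $\mu=\e\lambda_{k}$ and $X=X_{\e,k}$, I would solve the one-dimensional resolvent equation $(\mu I-L)\mathbf{z}=\mathbf{V}-\mathbf{T}-\frac{4}{3}\mathbf{U}$, i.e.
$$-z_{1}''+z_{1}'+\mu z_{1}-e^{x}z_{1}(0) = g_{1}(x)\ (x\le 0),\qquad -z_{2}''+z_{2}'+\mu z_{2} = g_{2}(x)\ (x\ge 0),$$
with the transmission conditions $z_{1}(0)=z_{2}(0)$, $z_{1}'(0)=z_{2}'(0)$ and with the decay/integrability at $\pm\infty$ built into $D(L)$. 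The right-hand sides $g_{1}$ and $g_{2}$ are read off from \eqref{funct-u2-a}-\eqref{funct-u2-b}: a polynomial of degree two times $e^{x}$ on the left half-line, and a linear polynomial on the right. The characteristic roots of the homogeneous equation are $r_{\pm}=(1\pm X)/2$, so the only admissible exponential mode is $e^{r_{+}x}$ on $(-\infty,0]$ and $e^{r_{-}x}$ on $[0,+\infty)$. Particular solutions are produced by undetermined coefficients (remembering to also absorb the forcing $e^{x}z_{1}(0)$, which after solution self-consistently feeds back into the single unknown $z_{1}(0)$), and matching through the two transmission conditions reduces the problem to a $2\times 2$ linear system for the two free constants.

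Substituting the resulting rational expression for $z_{1}(0)$ into the above formula for $b_{\e,k}$ and simplifying should collapse everything into the announced closed form
$$b_{\e,k}=\frac{3(X_{\e,k}+1)(X_{\e,k}^{2}+2X_{\e,k}-1)}{4(X_{\e,k}+2)}.$$
The asymptotic relation then follows at once: since $X_{\e,k}=\sqrt{1+4\e\lambda_{k}}\to+\infty$ as $k\to+\infty$, a degree count in the rational fraction gives $b_{\e,k}\sim \frac{3}{4}X_{\e,k}^{2}=\frac{3}{4}(1+4\e\lambda_{k})\sim 3\e\lambda_{k}$. The main obstacle is purely computational: carrying the ODE algebra through four undetermined-coefficient ansätze, handling the self-referential constant $z_{1}(0)$ on the left half-line, and recognising the final factored form $(X+1)(X^{2}+2X-1)/(X+2)$ out of a clutter of $X$-rational pieces.
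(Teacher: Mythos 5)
Your plan is essentially the same as the paper's: you Fourier-transform in $\eta$ to reduce $R(0,L_{\e})$ acting on products $w_{k}\mathbf{h}$ to the one-dimensional resolvent $R(\e\lambda_{k},L)$, evaluate the first component at $x=0$, and then unpack the symbol of $\mathscr{B}_{\e}$ as $b_{\e,k}=1+\tfrac{13}{3}\e\lambda_{k}+3\e^{2}\lambda_{k}^{2}\bigl[R(\e\lambda_{k},L)(\mathbf{V}-\mathbf{T}-\tfrac{4}{3}\mathbf{U})\bigr]_{1}(0)$ — exactly the reduction the paper makes. The one genuine divergence is how the boundary value $\bigl[R(\e\lambda_{k},L)\mathbf{h}\bigr]_{1}(0)$ is obtained: the paper invokes the explicit trace formula \eqref{3.1-b} of Theorem \ref{thm:1}(v) and then just integrates $e^{-\nu_{1,k}t}$ and $e^{-\nu_{2,k}t}$ against the polynomial-times-exponential profiles of $\mathbf{V}-\mathbf{T}-\tfrac{4}{3}\mathbf{U}$, whereas you propose to re-derive the one-dimensional resolvent from scratch by solving the two half-line ODEs with undetermined coefficients, handling the self-referential $e^{x}z_{1}(0)$ forcing on the left, and matching through the two transmission conditions. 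That works and gives a self-contained argument, but it essentially reproves what \eqref{3.1-b} already packages; the paper's route is shorter because the $g(\lambda)$ prefactor there absorbs the $e^{x}z_{1}(0)$ coupling automatically. One small point to tighten: since $\lambda_{0}=0$ and $0\in\sigma(L)$, your identity $R(0,L_{\e})[w_{0}\mathbf{h}]=w_{0}R(0,L)\mathbf{h}$ must be read as inverting the restriction of $L$ to $(I-\mathscr{P})(\mathscr{X})$ (where $0$ is in the resolvent set), which is exactly why the paper passes to $\lambda=0$ by continuity; the zeroth mode then correctly gives $b_{\e,0}=1$, consistent with $X_{\e,0}=1$ in the closed form.
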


\begin{proof}
Even if the proof can be obtained arguing as in the proof of
\cite[Prop. 4.2]{BHL08}, for the reader's convenience we go into
details.

The main step of the proof is the computation of the symbols of the
two operators $\varphi\mapsto {\bf u}:=\left (R(0,L_{\varepsilon})
\left [\varphi\left ({\bf V}-{\bf T}-\frac{4}{3}{\bf U}\right
)\right ]\right )(0,\cdot)$ and $\varphi\mapsto {\bf
v}:=\left(R(0,L_{\varepsilon})[ \varphi({\bf T}'-{\bf
U})]\right)(0,\cdot)$, for any $\e>0$. To enlighten a bit the
notation, throughout the proof we do not stress explicitly the
dependence on the quantities we consider on $\e$.

We claim that
\begin{align}
&\hat u_1(0,k)=
-\frac{4}{9}\frac{4X_k+7}{(X_k+1)^2(X_k+2)}
\hat\varphi(k),\qquad k=0,1,\ldots,\label{u1k}
\\[2mm]
&\hat v_1(0,k)= \frac{2}{3}\frac{1}{(X_k+1)(X_k+2)}
\hat\varphi(k),\qquad\;\,k=0,1,\ldots \label{u2k}
\end{align}
We limit ourselves to dealing with the function ${\bf u}$, since the
same arguments apply to the function ${\bf v}$. Let us first assume
that $\varphi$ is smooth enough. Since the function ${\bf V}-{\bf
T}-\frac{4}{3}{\bf U}$ belongs to $(I-{\mathscr P})({\mathscr X})$,
from Proposition \ref{thm-generation}(iii) it follows that ${\bf
u}\in D(L+\e A)$, so that ${L}{\bf u}+\e{A}{\bf u}=-({\bf V}-{\bf
T}-\frac{4}{3}{\bf U})\varphi$. Moreover, the function $\hat{\bf
u}(\cdot,k)$ belongs to $(I-{\mathscr P})(D(L))$ and solves the
equation $(\e\lambda_k-L)\hat {\bf u}(\cdot,k)=({\bf V}-{\bf
T}-\frac{4}{3}{\bf U})\hat\varphi(k)$ for any $k=0,1,\ldots$ Since
$\lambda_k$ is in the resolvent set of the operator ${L}$ for any
$k=0,1,\ldots$, by Theorem \ref{thm:1} it follows that
\begin{equation}
\hat{\bf u}(\cdot,k)=R(\e\lambda_k,{L})\left ({\bf V}-{\bf
T}-\frac{4}{3}{\bf U}\right )\hat\varphi(k),\qquad\;\, k=0,1,\ldots
\label{form-u}
\end{equation}
Formula \eqref{form-u} can be extended to any function $\varphi\in
C_{\sharp}$ by a straightforward approximation argument.

From formula \eqref{3.1-b} it is immediate to check that
\begin{align*}
(R(\e\lambda_k,L){\bf f})_1(0,\cdot)=&
\frac{2\e\lambda_k}{1+(2\e\lambda_k-1)X_k}\notag\\
&\qquad\times \left [
\int_{-\infty}^0e^{-\nu_{1,k}t}f_1(t,\cdot)dt
+\int_0^{+\infty}e^{-\nu_{2,k}t}f_2(t,\cdot)dt\right ],
\end{align*}
for any ${\bf f}=(f_1,f_2)\in {\mathscr X}$, where
\begin{eqnarray*}
\nu_{1,k}=\frac{1}{2}-\frac{1}{2}X_k,\qquad
\nu_{2,k}=\frac{1}{2}+\frac{1}{2}X_k,\qquad\;\,k=0,1,\ldots
\end{eqnarray*}
Hence, from the very definition of the functions ${\bf V}$, ${\bf
T}$ and ${\bf U}$ (see \eqref{funct-u2-a} and \eqref{funct-u2-b}),
we get
\begin{align*}
\left \{( \e\lambda_k+D_x-D^2_x)^{-1}\left ({\bf V}-{\bf
T}-\frac{4}{3}{\bf U}\right )\right \}_1(0)=&
-\frac{8\nu_{2,k}^2-5\nu_{2,k}-3}{9\nu_{2,k}^3X_k}\\[1mm]
=& -\frac{4}{9}
\frac{(4X_k+7)(X_k-1)}{X_k(X_k+1)^3}.
\end{align*}
Since $0$ is in the resolvent set
of the restriction of $L$ to $(I-{\mathscr P})({\mathscr X})$, we
can extend the previous formula, by continuity, to $\lambda=0$.
Thus,
\begin{align*}
\hat u_1(0,k)&=-\frac{4}{9}\frac{2\e\lambda_k}
{1+(2\e\lambda_k-1)X_k}
\frac{(4X_k+7)(X_k-1)}{(X_k+1)^3}
\hat\varphi(k)\\
&=\frac{4}{9}\frac{(4X_k+7)(X_k-1)}{(X_k+2)(X_k+1)^2}\hat\varphi(k),
\end{align*}
for any $k=0,1,\ldots$, and the assertion follows.

Now, using Formulae \eqref{u1k} and \eqref{u2k}, it is immediate to
complete the proof.
\end{proof}
\begin{prop}
\label{prop-inv-B} For any $\e\in (0,1/2]$, the operator ${\mathscr
B}_{\e}$ is invertible from $C_{\sharp}^{2+\theta}$ into
$C_{\sharp}^{\theta}$ for any $\theta\in (0,1)$.
\end{prop}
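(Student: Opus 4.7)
The approach is to exploit that, by Lemma~\ref{operatorB-1}, ${\mathscr B}_\e$ acts diagonally in the basis $\{w_k\}$ with eigenvalues $b_{\e,k}$, so the natural candidate for the inverse is the Fourier multiplier operator
$$
{\mathscr B}_\e^{-1}f=\sum_{k=0}^{+\infty}\frac{\hat f(k)}{b_{\e,k}}\,w_k.
$$
The first step is to establish positivity and the correct growth of $b_{\e,k}$. From the factored expression in Lemma~\ref{operatorB-1} and the inequality $X_{\e,k}\geq 1$ (valid for every $k\ge 0$ and $\e\in(0,1/2]$), the numerator $(X_{\e,k}+1)(X_{\e,k}^{2}+2X_{\e,k}-1)$ is at least $4$ and the denominator $X_{\e,k}+2$ is comparable to $X_{\e,k}+1$, so $b_{\e,k}\geq 1$; combined with the asymptotic $b_{\e,k}\sim 3\e\lambda_k$ this yields the two-sided bound $c_1(1+\e\lambda_k)\leq b_{\e,k}\leq c_2(1+\e\lambda_k)$, so $b_{\e,k}^{-1}$ decays like $\lambda_k^{-1}$ and the above series at least makes sense on $L^2$.

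\textbf{Step 2.}
The plan is then to compare ${\mathscr B}_\e^{-1}$ with $(I-\tfrac{13\e}{3}D_{\eta\eta})^{-1}$, which by classical Schauder theory for second-order periodic elliptic operators is an isomorphism from $C_\sharp^\theta$ onto $C_\sharp^{2+\theta}$. Writing
$$
\frac{1}{b_{\e,k}}=\alpha_{\e,k}\cdot\frac{1}{1+\tfrac{13\e}{3}\lambda_k},\qquad \alpha_{\e,k}:=\frac{1+\tfrac{13\e}{3}\lambda_k}{b_{\e,k}},
$$
one factors ${\mathscr B}_\e^{-1}=M_\alpha\circ(I-\tfrac{13\e}{3}D_{\eta\eta})^{-1}$, where $M_\alpha$ is the Fourier multiplier operator with symbol $\alpha_{\e,k}$. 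A short algebraic manipulation of the expression in Lemma~\ref{operatorB-1} shows that $\alpha_{\e,k}\to 13/9$ as $k\to\infty$ with $\alpha_{\e,k}-13/9=O(X_{\e,k}^{-1})=O(\lambda_k^{-1/2})$, and that $\alpha_{\e,k}$ is bounded above and below by positive constants; hence $M_\alpha=\tfrac{13}{9}I+K_\e$ with $K_\e$ having a symbol that vanishes at infinity.

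\textbf{Step 3 and main obstacle.}
The remaining (and only real) difficulty is the continuity of $M_\alpha$ on $C^{2+\theta}_\sharp$: Fourier multipliers with merely bounded symbol act automatically on $L^2$, but on H\"older classes a supplementary condition is needed. One route is to verify a Mikhlin-type estimate $\sup_k(1+k)|\alpha_{\e,k+1}-\alpha_{\e,k}|<\infty$ using the explicit rational form of $\alpha_{\e,k}$ in $X_{\e,k}$ together with the smoothness bound $X_{\e,k+1}-X_{\e,k}=O(\e/X_{\e,k})$; standard multiplier theorems on periodic H\"older spaces then give continuity of $M_\alpha$. Alternatively, and more cleanly, the decay of the symbol of $K_\e$ will make $K_\e$ compact on $C^{2+\theta}_\sharp$, and since $\alpha_{\e,k}>0$ the multiplier $M_\alpha$ is injective, so the Fredholm alternative yields invertibility. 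Composing the two bounded isomorphisms then gives continuity of ${\mathscr B}_\e^{-1}\colon C_\sharp^\theta\to C_\sharp^{2+\theta}$, and the identities ${\mathscr B}_\e{\mathscr B}_\e^{-1}=I$ and ${\mathscr B}_\e^{-1}{\mathscr B}_\e=I$ extend from basis elements $w_k$ to the full spaces by density and continuity.
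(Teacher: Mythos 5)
Your proposal is correct, but it takes a different route than the paper. Both start from the Fourier-multiplier description given by Lemma~\ref{operatorB-1} and both exploit that $\mathscr B_\e$ differs from a constant-coefficient second-order elliptic operator by a term of strictly lower order. The paper, however, works directly with the equation $\mathscr B_\e u = f$: it uses the $L^2$/$H^2$ theory to produce $u\in H^2$, writes $\mathscr B_\e = -3\e D_{\eta\eta}+\overline{\mathscr B}_\e$ with $\overline b_{\e,k}\sim\tfrac32\sqrt{\e\lambda_k}$ (an order-$1$ remainder), observes $\overline{\mathscr B}_\e u\in H^1\subset C_\sharp$, and then recovers $u_{\eta\eta}\in C_\sharp$ by difference, finishing with a bootstrap. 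You instead factor $\mathscr B_\e^{-1}=M_\alpha\circ(I-\tfrac{13\e}{3}D_{\eta\eta})^{-1}$ and must prove that the zeroth-order multiplier $M_\alpha$ is bounded on $C^{2+\theta}_\sharp$. That additional step is the price you pay for the factoring, and it is where your writeup is least precise: the discrete Mikhlin-type condition $\sup_k(1+k)|\alpha_{\e,k+1}-\alpha_{\e,k}|<\infty$ as you state it (a first-difference bound only) is not a complete hypothesis for a Hölder/Besov multiplier theorem, which also requires control of higher differences or derivatives of the symbol; as phrased this route is a gap. Your second route (writing $M_\alpha=\tfrac{13}{9}I+K_\e$ with $K_\e$ of order $-1$, arguing compactness on $C^{2+\theta}_\sharp$ via $K_\e: H^{s}\to H^{s+1}$ and the periodic embeddings $C^m_\sharp\subset H^{m-\delta}\subset C^{m-\delta-1/2}_\sharp$, and then invoking injectivity plus Fredholm) is sound and fills that gap cleanly, but it invokes heavier machinery than the paper needs. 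In short: the idea and decomposition are essentially equivalent to the paper's, you simply package the reduction to a second-order problem as an operator factorization rather than as a direct PDE argument, and the compactness/Fredholm route is correct while the multiplier-theorem route as written would need strengthening.
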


\begin{proof}
From Lemma \ref{operatorB-1}, we know that $b_{\e,k}\neq 0$, for any
$k\in\N\cup\{0\}$. Hence, operator ${\mathscr B}_{\e}$ admits a
realization in $L^2$ which is invertible from $H^2$ into $L^2$. We
still denote by ${\mathscr B}_{\e}$ such a realization. To prove
that ${\mathscr B}_{\e}$ is invertible from $C_{\sharp}^2$ into
$C_{\sharp}$, let us fix $f\in C_{\sharp}$ and let $u\in H^2$ be the
unique solution to the equation ${\mathscr B}_{\e}u=f$. Taking
\eqref{bek} into account, it is immediate to check that, we can
split ${\mathscr B}_{\e} =-3\e D_{\eta\eta}+\overline {\mathscr
B}_{\e}$, where $\overline {\mathscr B}_{\e}$ is a bounded operator,
whose symbol $(\overline b_{\e,k})$ satisfies
\begin{eqnarray*}
\overline b_{\e,k}\sim \frac{3}{2}\sqrt{\e\lambda_k},\qquad (k\to
+\infty).
\end{eqnarray*}
It follows that the function $\overline {\mathscr B}_{\e}(u)$ is in
$C_{\sharp}$. By difference $u_{\eta\eta}$ is in $C_{\sharp}$ as
well. A bootstrap argument can now be used to prove that, if $f\in
C_{\sharp}^{\theta}$, then $u\in C_{\sharp}^{2+\theta}$.
\end{proof}

In view of Proposition \ref{prop-inv-B}, we can invert the operator
${\mathscr B}_{\e}$ from $C_{\sharp}^{2+\theta}$ into
$C_{\sharp}^{\theta}$ for any $\theta\in (0,1)$, getting the
following equation for $\rho$:
\begin{equation}
\rho_{\tau}(\tau,\cdot) = {\mathscr R}_{\e}(\rho(\t,\cdot)) +
{\mathscr K}_{\e}(\tau,\rho_{\eta}(\tau,\cdot)), \qquad\;\,\tau\in
[0,T], \label{eq-2nd-order}
\end{equation}
where
\begin{align}
{\mathscr R}_{\e}(\rho)=&{\mathscr B}_{\e}^{-1}({\mathscr
S}_{\e}(\rho)),\notag\\[2mm]
{\mathscr K}_{\e}(\tau,\rho)=&{\mathscr B}_{\e}^{-1}({\mathscr
G}_{\e}((\Phi_{\eta}(\tau,\cdot))^2))-{\mathscr
B}_{\e}^{-1}(\Phi_\eta(\tau,\cdot)\rho) +2\e {\mathscr
B}_{\e}^{-1}({\mathscr
G}_{\e}(\Phi_{\eta}(\tau,\cdot)\rho))\notag\\
&-\frac{\e}{2}{\mathscr B}_{\e}^{-1}(\rho^2) + \e^2{\mathscr
B}_{\e}^{-1}({\mathscr G}_{\e}(\rho^2))+3{\mathscr B}_{\e}^{-1}(\Phi_{\eta\eta\eta\eta})\notag\\
&+3{\mathscr B}_{\e}^{-1}\left
(\left(D_{\eta\eta}R(0,L_{\varepsilon})[ \Phi_{\eta\eta}({\bf
T}'-{\bf U})]\right)(\cdot,0,\cdot)\right )
+\frac{13}{3}{\mathscr B}_{\e}^{-1}\left (\Phi_{\tau\eta\eta}\right )\notag\\
&-3\e{\mathscr B}_{\e}^{-1}\left (\left
(D_{\eta\eta}R(0,L_{\varepsilon}) \left [\Phi_{\tau\eta\eta}\left
({\bf V}-{\bf T}-\frac{4}{3}{\bf U}\right )\right ]\right
)(\cdot,0,\cdot)\right ). \label{K-eps}
\end{align}

\subsection{Solving Equation \eqref{eq-2nd-order}}
\label{subsect-symbol} Here, we prove an existence and uniqueness
result for  Equation \eqref{eq-2nd-order} with initial condition
$\rho(0,\cdot)=0$. For this purpose, we need to thoroughly study the
operators ${\mathscr R}_{\e}$ and ${\mathscr K}_{\e}$. To enlighten
the notation, we do not stress explicitly the dependence on $\e$ of
the symbols of the operators we are going to consider. In
particular, we simply write $X_k$ for $X_{\e,k}$ (see \eqref{Xk}).

We begin by considering the operator ${\mathscr R}_{\e}$. Taking
Proposition \ref{prop-inv-B} and Theorem \ref{thm-generation}(iii)
into account, it is immediate to check that the operator ${\mathscr
R}_{\e}$ is well-defined in $C_{\sharp}^4$. Actually, we show that
it can be extended to $C_{\sharp}^1\cap C^2$ with a bounded operator
which is sectorial.

\begin{prop}
\label{prop-5.4} For any $\e\in (0,1/2]$, the operator ${\mathscr
R}_{\e}$ can be extended with a sectorial operator $R_{\e}$ having
$C_{\sharp}^1\cap C^2$ as domain. Moreover,
$D_{R_{\e}}(\theta,\infty)=C_{\sharp}^{2\theta}$ for any $\theta\in
(0,1)\setminus\{1/2\}$, with equivalence of the corresponding norms.
\end{prop}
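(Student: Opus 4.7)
The plan is to reduce $R_\e$ to a constant-coefficient second-order operator plus a lower-order pseudo-differential perturbation, and then to invoke the classical sectoriality of $D_{\eta\eta}$ with periodic boundary conditions together with standard perturbation results for sectorial operators and their real interpolation spaces.

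\emph{Step 1 (Symbol of $R_\e$).} First I would compute the $k$-th Fourier multiplier $s_{\e,k}$ of the operator $\mathscr{S}_{\e}$ defined in \eqref{S-ve}, proceeding exactly as in the proof of Lemma \ref{operatorB-1}: the pseudo-differential term $3\e\bigl(D_{\eta\eta}R(0,L_{\e})[\psi_{\eta\eta}({\bf T}'-{\bf U})]\bigr)(\cdot,0,\cdot)$ is decomposed in Fourier series and evaluated by using $R(\e\lambda_k,L)$ applied to the profile ${\bf T}'-{\bf U}$ via the representation formula \eqref{3.1-b}. Combined with the explicit form of $b_{\e,k}$ in \eqref{bek}, this yields an explicit closed rational expression for $r_{\e,k}=s_{\e,k}/b_{\e,k}$ entirely in terms of $X_{\e,k}=\sqrt{1+4\e\lambda_k}$.

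\emph{Step 2 (Principal part plus lower-order remainder).} The dominant term of $s_{\e,k}$ is $-3(1-\e)\lambda_k^{2}$, while $b_{\e,k}\sim 3\e\lambda_k$ by \eqref{bek}. Hence $r_{\e,k}\sim -\frac{1-\e}{\e}\lambda_k$ as $k\to +\infty$. Expanding the closed-form rational function in the variable $X_{\e,k}$, I would write
\begin{equation*}
r_{\e,k}=-\frac{1-\e}{\e}\lambda_{k}+\bar r_{\e,k},\qquad |\bar r_{\e,k}|\le C_{\e}(1+\sqrt{\lambda_{k}}),
\end{equation*}
which corresponds, at the operator level, to the splitting
\begin{equation*}
R_{\e}=\frac{1-\e}{\e}D_{\eta\eta}+\bar R_{\e},
\end{equation*}
where $\bar R_{\e}$ is a Fourier multiplier of order at most one, and in particular continuous from $C_{\sharp}^{1+\alpha}$ into $C_{\sharp}^{\alpha}$.

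\emph{Step 3 (Sectoriality and interpolation by perturbation).} The realization of $D_{\eta\eta}$ in $C([-\ell_0/2,\ell_0/2])$ with periodic boundary conditions, whose natural domain is precisely $C_{\sharp}^{1}\cap C^{2}$, is a classical sectorial operator, and its real interpolation spaces are the little H\"older spaces $C_{\sharp}^{2\theta}$ for $\theta\in (0,1)\setminus\{1/2\}$ (see e.g.\ Lunardi's book on analytic semigroups). Since $\bar R_{\e}$ is relatively bounded with respect to the principal part with a bound of order $1$ (i.e.\ it sends $C_{\sharp}^{1}\cap C^{2}$ into $C_{\sharp}^{0}$, and more generally $D_{A}(\vartheta,\infty)$ into $C_{\sharp}^{0}$ for every $\vartheta>1/2$), the classical Miyadera-type perturbation theorem for sectorial operators yields that $R_{\e}$ is sectorial with the same domain $C_{\sharp}^{1}\cap C^{2}$. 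Finally, since lower-order perturbations do not alter the trace-space characterization, I would deduce $D_{R_{\e}}(\theta,\infty)=D_{A}(\theta,\infty)=C_{\sharp}^{2\theta}$ for every admissible $\theta$.

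\emph{Main obstacle.} The most delicate part is the sharp decay estimate on $\bar r_{\e,k}$ of Step 2, together with its subsequent use in Step 3. One must ensure that $\bar R_{\e}$ is genuinely of order strictly less than two, uniformly enough in $k$, to fit into the hypotheses of the perturbation theorem, and — most importantly — one has to check that the characterization of $D_{R_{\e}}(\theta,\infty)$ survives the perturbation across the whole range $\theta\in (0,1)\setminus\{1/2\}$, in particular in the regime $\theta\in (1/2,1)$ where periodicity of the first derivative is built into the very definition of $C_{\sharp}^{2\theta}$ and must be recovered from the abstract domain characterization.
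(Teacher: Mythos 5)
Your proposal follows essentially the same route as the paper: compute the Fourier multiplier $r_{\e,k}$ from $s_{\e,k}$ and $b_{\e,k}$, split $R_{\e}=\frac{1-\e}{\e}D_{\eta\eta}+\bar R_{\e}$ with $\bar R_{\e}$ of lower order, and then invoke a perturbation result for sectorial operators together with equivalence of graph norms to transfer the interpolation-space characterization from $D_{\eta\eta}$ to $R_{\e}$. Two minor cosmetic points: the perturbation result used is the $J_{\theta}$-class criterion of \cite[Prop.\ 2.4.1(i)]{lunardi} rather than a Miyadera-type theorem (which is a $C_0$-semigroup statement), and the paper, passing through $H^s\hookrightarrow C_\sharp^{s-1/2}$, only establishes the weaker mapping $\bar R_{\e}\colon C_{\sharp}^{s}\to C_{\sharp}^{s-3/2}$ (so the threshold in your Step 3 becomes $\vartheta>3/4$ rather than $\vartheta>1/2$), which is still amply sufficient for the perturbation argument since $3/4<1$.
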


\begin{proof}
To begin with, we compute the symbol of the operator ${\mathscr
S}_{\e}$. We have:
\begin{equation}
s_k=\frac{3(X_k-1)(X_k+1)^2\{(\e-1)X_k^2+(\e-1)X_k+2\}}
{16\e^2(X_k+2)} \sim 48 (\e-1)\e^2\lambda_k^2,
\label{sek}
\end{equation}
as $k\to +\infty$. Hence, from \eqref{bek} and \eqref{sek} it
follows that the $k$-th symbol of the operator ${\mathscr R}_{\e}$
is
\begin{eqnarray*}
r_k=\frac{(X_k^2-1)\{(\e-1)X_k^2
+(\e-1)X_k+2\}}{4\e^2(X_k^2+2X_k-1)},\qquad\;\,k=0,1,\ldots
\end{eqnarray*}
At any fixed $\e\in (0,1/2]$ $r_k\sim (1-\e^{-1})\lambda_k$ as $k\to
+\infty$. Hence, we can split
\begin{eqnarray*}
{\mathscr R}_{\e}\phi=\frac{1-\e}{\e}\varphi_{yy}+{\mathscr
R}_{\e}^{(1)}\phi,
\end{eqnarray*}
where the symbol of ${\mathscr R}_{\e}^{(1)}$ is
\begin{eqnarray*}
r_k^{(1)}=\frac{(X_k^2-1)\{(1-\e)X_k+\e+1\}}{4\e^2(X_k^2+2X_k-1)}\sim
\frac{(1-\e)\sqrt{\e}}{2\e^2}\sqrt{\lambda_k},\qquad\;\,(k\to
+\infty).
\end{eqnarray*}
We claim that the operator ${\mathscr R}_{\e}^{(1)}$ admits a
realization in $C([-\ell_0/2,\ell_0/2])$ which is a bounded operator
mapping $C_{\sharp}^{1+\alpha}$ (for any $\alpha\in (0,1)$) into
$C_{\sharp}$. As a first step, we observe that, due to the
characterization of the spaces $H^s$ given in \eqref{space-H}, the
operator ${\mathscr R}_{\e}^{(1)}$ admits a realization $
R_{\e}^{(1)}$ which is bounded from $H^s$ into $H^{s-1}$ for any
$s\ge 1$. It is well-known that $C_{\sharp}^m\subset H^m\subset
C_{\sharp}^{m-1/2}$ with continuous embeddings, for any $m>1/2$ such
that $m-1/2\notin\N$. As a consequence, the operator $R_{\e}^{(1)}$
is bounded from $C_{\sharp}^s$ into $C_{\sharp}^{s-3/2}$ for any
$s>3/2$ such that $s-3/2\notin\N$. Therefore, the operator
${\mathscr R}_{\e}$ can be extended with a bounded operator $R_{\e}$
from $D(R_{\e})=C_{\sharp}^1\cap C^2$ into
$C([-\ell_0/2,\ell_0/2])$.

Let us now prove that $R_{\e}$ is sectorial. For this purpose, we
note that $C_{\sharp}^{\theta}$ belongs to the class $J_{\theta/2}$
between $C([-\ell_0/2,\ell_0/2])$ and $C_{\sharp}^1\cap C^2$, for
any $\theta\in (0,2)$, i.e., there exists a positive constant $K$
such that
\begin{equation}
\|f\|_{C^{\theta}([-\ell_0/2,\ell_0/2])}\le
K\|f\|_{C([-\ell_0/2,\ell_0/2])}^{\frac{2-\theta}{2}}
\|f\|_{C^2}^{\frac{\theta}{2}}, \label{stim-interp-theta}
\end{equation}
for any $f\in C_{\sharp}^1\cap C^2$, and the realization of the
second order derivative in $C([-\ell_0/2,\ell_0/2])$ with domain
$C_{\sharp}^1\cap C^2$ is sectorial. Hence, we can apply \cite[Prop.
2.4.1(i)]{lunardi} and conclude that the operator $R_{\e}$ is
sectorial in $C([-\ell_0/2,\ell_0/2])$. From the above arguments, it
is now clear that the graph norm of $R_{\e}$ is equivalent to the
Euclidean norm of $C_{\sharp}^1\cap C^2$. Hence, \cite[Prop.
2.2.2]{lunardi} implies that
$D_{R_{\e}}(\theta,\infty)=C_{\sharp}^{2\theta}$ for any $\theta\in
(0,1)\setminus\{1/2\}$.
\end{proof}

We now consider the operator ${\mathscr K}_{\e}$. From Proposition
\ref{prop-inv-B} and Theorems \ref{thm-4.1},
\ref{thm-generation}(iii), we know that the operator ${\mathscr
K}_{\e}$ is continuous from $C_{\sharp}^{2+\alpha}$ into
$[0,+\infty)\times C_{\sharp}$ for any $\alpha>0$. Let us show that
it can be extended to a larger domain.

\begin{prop}
\label{prop-H} For any $\e\in (0,1/2]$, the operator ${\mathscr
K}_{\e}$ can be extended with a continuous operator mapping
$C_{\sharp}^s$ into $[0,+\infty)\times C_{\sharp}^s$ for any $s\in
[0,2]$. Moreover, for any $T>0$ and any $r>0$, there exists a
positive constant $K=K(T,r)$ such that
\begin{equation*}
\|{\mathscr K}_{\e}(\tau_2,\psi)-{\mathscr
K}_{\e}(\tau_1,\psi)\|_{\infty}+\|{\mathscr
K}_{\e}(\tau,\psi)-{\mathscr K}_{\e}(\tau,\xi)\|_{\infty} \le K\left
(|\tau_2-\tau_1|+ \|\psi-\xi\|_{\infty}\right ),
\end{equation*}
for any $\tau,\tau_1,\tau_2\in [0,T]$ and any $\psi,\xi\in
B(0,r)\subset C_{\sharp}$.
\end{prop}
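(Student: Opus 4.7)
The proof proceeds by examining each of the summands in \eqref{K-eps} and tracking how the two building-block operators $\mathscr{B}_\e^{-1}$ and $\mathscr{B}_\e^{-1}\circ\mathscr{G}_\e$ act on them. The main ingredients will be Proposition~\ref{prop-inv-B} (so that $\mathscr{B}_\e^{-1}$ gains two spatial derivatives), Lemma~\ref{operatorB-1} together with the multipliers \eqref{u1k}--\eqref{u2k} (so that the symbol of $\mathscr{G}_\e$ can be computed explicitly), the smoothing estimate \eqref{estim-R}, and the regularity $\Phi\in\mathscr{Y}_T$ from Theorem~\ref{thm-4.1}.

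The key computation is the symbol of $\mathscr{G}_\e$. Using the formulas $\hat u_1(0,k)=-\tfrac{4}{9}\tfrac{4X_k+7}{(X_k+1)^2(X_k+2)}\hat\varphi(k)$ and $\hat v_1(0,k)=\tfrac{2}{3(X_k+1)(X_k+2)}\hat\varphi(k)$ proved in Lemma~\ref{operatorB-1}, together with the observation that $D_{\eta\eta}$ contributes a factor $-\lambda_k$ and that $4\e\lambda_k=X_k^2-1$, I compute
\begin{eqnarray*}
g_{\e,k}=-\frac{17}{6}\lambda_k+\frac{\lambda_k}{(X_k+1)(X_k+2)}+\frac{(X_k-1)(4X_k+7)\lambda_k}{3(X_k+1)(X_k+2)}\sim -\frac{3}{2}\lambda_k\qquad(k\to+\infty).
\end{eqnarray*}
Combined with \eqref{bek}, namely $b_{\e,k}\sim 3\e\lambda_k$, this shows that the composite $\mathscr{B}_\e^{-1}\circ\mathscr{G}_\e$ has Fourier multiplier converging to the constant $-\tfrac{1}{2\e}$ as $k\to+\infty$. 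Writing $\mathscr{B}_\e^{-1}\circ\mathscr{G}_\e=-\tfrac{1}{2\e}I+S_\e$, I expect $S_\e$ to have a symbol decaying like $X_k^{-1}\sim(\e\lambda_k)^{-1/2}$, so it extends to a bounded operator from $C_\sharp$ into $C_\sharp^1$ by exactly the bootstrap/interpolation reasoning used at the end of Proposition~\ref{prop-inv-B} (comparing with $-3\e D_{\eta\eta}$). In particular, $\mathscr{B}_\e^{-1}\circ\mathscr{G}_\e$ is bounded on $C_\sharp^s$ for every $s\in[0,2]$.

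With these two boundedness statements in hand, I would treat the six summands of \eqref{K-eps} separately. The $\rho$-dependent pieces fall into two classes: those of the form $\mathscr{B}_\e^{-1}(\Phi_\eta(\tau,\cdot)\rho)$ and $\mathscr{B}_\e^{-1}(\rho^2)$, which inherit two derivatives of smoothing from Proposition~\ref{prop-inv-B}, and those of the form $\mathscr{B}_\e^{-1}\circ\mathscr{G}_\e$ applied to the same products, which are bounded on $C_\sharp^s$ by the previous paragraph. In both cases multiplication by $\Phi_\eta(\tau,\cdot)$ is continuous $C_\sharp^s\to C_\sharp^s$ (with norm bounded uniformly in $\tau\in[0,T]$ since $\Phi\in\mathscr{Y}_T$), while $\rho\mapsto\rho^2$ is locally Lipschitz on balls of $C_\sharp$ with values in $C_\sharp$, with constant controlled by the radius $r$. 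The purely $\Phi$-dependent summands $\mathscr{B}_\e^{-1}\mathscr{G}_\e((\Phi_\eta)^2)$, $\mathscr{B}_\e^{-1}(\Phi_{\eta\eta\eta\eta})$, $\mathscr{B}_\e^{-1}(\Phi_{\tau\eta\eta})$, and the two terms in which $R(0,L_\e)$ is applied to $\Phi_{\eta\eta}({\bf T}'-{\bf U})$ or $\Phi_{\tau\eta\eta}({\bf V}-{\bf T}-\tfrac{4}{3}{\bf U})$, are all estimated in $C_\sharp^s$ using the facts that $\Phi_{\eta\eta\eta\eta}(\tau,\cdot)$ and $\Phi_{\tau\eta\eta}(\tau,\cdot)$ belong to $C_\sharp^\alpha$ uniformly in $\tau$, together with the bounds coming from Theorem~\ref{thm-generation} and \eqref{estim-R}.

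Finally, for the Lipschitz estimate in sup-norm, the $\tau$-continuity reduces to the Lipschitz dependence of $\tau\mapsto\Phi_\eta(\tau,\cdot)$, $\tau\mapsto(\Phi_\eta(\tau,\cdot))^2$, and $\tau\mapsto\Phi_{\tau\eta\eta}(\tau,\cdot)$ (the last one using the uniform continuity of $\Phi_{\tau\eta\eta}$ on $[0,T]\times[-\ell_0/2,\ell_0/2]$, which follows from $\Phi_\tau\in C^{0,2+\alpha}$), while the $\psi$-continuity follows from the linear bounds for the terms containing $\Phi_\eta\psi$ and from the elementary inequality $\|\psi^2-\xi^2\|_\infty\le 2r\|\psi-\xi\|_\infty$ valid for $\psi,\xi\in B(0,r)\subset C_\sharp$. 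I expect the only genuine obstacle to be the symbol analysis in the second paragraph: verifying rigorously that the lower-order remainder $S_\e$ is bounded not merely in $L^2$ but in the Hölder-type spaces $C_\sharp^s$, which requires carefully applying the Mikhlin-type/bootstrap argument of Proposition~\ref{prop-inv-B} to the more delicate symbol $g_{\e,k}/b_{\e,k}+\tfrac{1}{2\e}$.
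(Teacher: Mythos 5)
Your proposal follows essentially the same route as the paper's proof: compute the symbol $g_{\e,k}$ of ${\mathscr G}_{\e}$ from Formulae \eqref{u1k}--\eqref{u2k}, divide by $b_{\e,k}$ to obtain the symbol $z_{k}$ of ${\mathscr Z}_{\e}:={\mathscr B}_{\e}^{-1}{\mathscr G}_{\e}$, split ${\mathscr Z}_{\e}=-\tfrac{1}{2\e}\,{\rm Id}+{\mathscr Z}_{\e}^{(1)}$, deduce that ${\mathscr Z}_{\e}^{(1)}$ is lower order, and then read off boundedness and Lipschitz dependence in $\tau$ and $\psi$ from \eqref{K-eps} using Proposition \ref{prop-inv-B}, Theorem \ref{thm-generation}(iii) and the regularity of $\Phi$ supplied by Theorem \ref{thm-4.1}. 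One small imprecision to correct: since the symbol of ${\mathscr Z}_{\e}^{(1)}$ decays only like $\lambda_{k}^{-1/2}$, the operator gains one derivative in the $H^s$ scale, but only strictly less than $1/2$ of a derivative after passing through the embeddings $C_{\sharp}^{m}\subset H^{m}\subset C_{\sharp}^{m-1/2}$ --- so the claim that $S_\e$ maps $C_{\sharp}$ into $C_{\sharp}^{1}$ overstates the smoothing; the correct statement, and all that is actually used, is that ${\mathscr Z}_{\e}^{(1)}$ maps $C_{\sharp}^{s}$ into $C_{\sharp}^{s+\theta}$ for $\theta\in(0,1/2)$, which still yields ${\mathscr Z}_{\e}$ bounded on $C_{\sharp}^{s}$. (Your explicit formula for $g_{\e,k}$ agrees with the paper's multipliers \eqref{u1k}--\eqref{u2k}, and differs from the coefficient written in \eqref{fek} only by a constant in the numerator polynomial, which does not affect the asymptotics $g_{\e,k}\sim -\tfrac{3}{2}\lambda_{k}$ nor $z_{k}\to -\tfrac{1}{2\e}$ on which the whole argument rests.)
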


\begin{proof}
As a first step, we observe that, using Formulae \eqref{u1k} and
\eqref{u2k}, one can easily show that the $k$-th symbol $g_k$ of the
operator ${\mathscr G}_{\e}$ is
\begin{equation}
g_k=-\lambda_k\frac{3X_k^2+15X_k+4}{2(X_k+1)(X_k+2)}.
\label{fek}
\end{equation}
From \eqref{bek} and \eqref{fek}, it follows that the symbol of the operator
${\mathscr Z}_{\e}:={\mathscr B}_{\e}^{-1}{\mathscr G}_{\e}$ is
\begin{eqnarray*}
z_k=-\frac{2}{3}\lambda_k\frac{3X_k^2+15X_k+4}{(X_k+1)^2(X_k^2+2X_k-1)}=
-\frac{1}{2\e}+z^{(1)}_k,
\end{eqnarray*}
where
\begin{equation}
z^{(1)}_k\sim -\frac{1}{4\sqrt{\e^3\lambda_k}},\qquad k\to +\infty.
\label{symb-z1}
\end{equation}
Hence, we can write
\begin{eqnarray*}
{\mathscr Z}_{\e}=-\frac{1}{2\e}Id+{\mathscr Z}_{\e}^{(1)}.
\end{eqnarray*}
Formula \eqref{symb-z1} shows that the operator ${\mathscr
Z}_{\e}^{(1)}$ is bounded from $H^s$ into $H^{s+1}$ for any $s\ge
0$. Hence, it is bounded from $C_{\sharp}^s$ into
$C_{\sharp}^{s+\theta}$ for any $s\in\N\cup\{0\}$ and any $\theta\in
(0,1/2)$. As a byproduct, the operator ${\mathscr Z}_{\e}$ is
bounded from $C_{\sharp}^s$ into itself for any $s\ge 0$. Since,
\begin{align*}
{\mathscr K}_{\e}(\tau,\psi)=&{\mathscr
Z}_{\e}((\Phi_{\eta}(\tau,\cdot))^2)- {\mathscr
B}_{\e}^{-1}(\Phi_\eta(\tau,\cdot)\psi) +2\e
{\mathscr Z}_{\e}(\Phi_{\eta}(\tau,\cdot)\psi)-\frac{\e}{2}{\mathscr B}_{\e}^{-1}(\psi^2)\notag\\
& + \e^2 {\mathscr Z}_{\e}(\psi^2)
+3{\mathscr B}_{\e}^{-1}(\Phi_{\eta\eta\eta\eta})+\frac{13}{3}{\mathscr B}_{\e}^{-1}(\Phi_{\tau\eta\eta})\notag\\
&+3{\mathscr B}_{\e}^{-1}\left\{\left (D_{\eta\eta}R(0,L_{\e})[\Phi_{\eta\eta}({\bf T}'-{\bf U})]\right )(\cdot,0,\cdot)
\right\}\notag\\
&-3\e{\mathscr B}_{\e}^{-1}\left\{\left (D_{\eta\eta}R(0,L_{\e})\left [\Phi_{\tau\eta\eta}
\left ({\bf V}-{\bf T}-\frac{4}{3}{\bf U}
\right )\right ]\right )(\cdot,0,\cdot)\right\},
\end{align*}
for any $\psi\in C_{\sharp}$, taking Proposition \ref{prop-inv-B}
and Theorem \ref{thm-4.1} into account, the assertion follows at
once.
\end{proof}

From all the previous results, we get the following:
\begin{theorem}
\label{thm-semilinear} For any $\e\in (0,1/2]$, Equation
\eqref{eq-2nd-order} admits a unique solution $\rho$ defined in a
maximal time domain $[0,T_{\e})$ which vanishes at $\tau=0$, belongs
to $C^{1,2}([0,T_{\e})\times [-\ell_0/2,\ell_0/2])$ and satisfies
$D_{\eta}^{(j)}\rho(\cdot,\ell_0/2)\equiv
D_{\eta}^{(j)}\rho(\cdot,\ell_0/2)$ for $j=0,1$.
\end{theorem}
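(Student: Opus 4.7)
The plan is to recast equation \eqref{eq-2nd-order} as an abstract semilinear Cauchy problem associated with the sectorial operator $R_{\e}$ furnished by Proposition \ref{prop-5.4}, and to invoke the standard local existence theory for parabolic semilinear equations in Banach spaces. Concretely, I would take as base space $X=C([-\ell_0/2,\ell_0/2])$, write \eqref{eq-2nd-order} in the form $\rho_{\tau}=R_{\e}\rho+F_{\e}(\tau,\rho)$ with $F_{\e}(\tau,\rho):=\mathscr{K}_{\e}(\tau,\rho_{\eta})$, and impose the zero initial condition. Since Proposition \ref{prop-5.4} identifies $D_{R_{\e}}(\theta,\infty)=C_{\sharp}^{2\theta}$ for any $\theta\in(0,1)\setminus\{1/2\}$, I would fix $\theta_0\in(1/2,3/4)$, so that $D_{R_{\e}}(\theta_0,\infty)=C_{\sharp}^{2\theta_0}$ embeds continuously into $C_{\sharp}^{1}$ and hence the map $\rho\mapsto\rho_{\eta}$ is continuous from $D_{R_{\e}}(\theta_0,\infty)$ into $C_{\sharp}^{2\theta_0-1}\subset C_{\sharp}$.

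Next, I would check the hypotheses of the abstract local existence theorem. Proposition \ref{prop-H} provides exactly the joint Lipschitz continuity of $\mathscr{K}_{\e}$ in $(\tau,\psi)$ on bounded subsets of $[0,T]\times C_{\sharp}$; composing it with the continuous linear operator $\rho\mapsto\rho_{\eta}$ on $D_{R_{\e}}(\theta_0,\infty)$ shows that $F_{\e}\colon[0,T]\times D_{R_{\e}}(\theta_0,\infty)\to X$ is jointly Lipschitz on bounded sets. Together with the sectoriality of $R_{\e}$ on $X$, the classical theory of semilinear parabolic equations with sectorial generators then yields, via a contraction argument on an $\e$-dependent ball in $C([0,T_{\e}];D_{R_{\e}}(\theta_0,\infty))$, a unique mild solution $\rho$ on a maximal existence interval $[0,T_{\e})$, satisfying $\rho(0,\cdot)=0$.

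To upgrade this mild solution to the regularity asserted by the theorem, I would use the Hölder-in-time regularization produced by the analytic semigroup generated by $R_{\e}$ combined with the Schauder-type estimates for sectorial operators. Because Proposition \ref{prop-H} guarantees that $\mathscr{K}_{\e}$ sends $C_{\sharp}^{s}$ into $C_{\sharp}^{s}$ for every $s\in[0,2]$, and since $\Phi\in\mathscr{Y}_{T}$ by Theorem \ref{thm-4.1}, the map $\tau\mapsto F_{\e}(\tau,\rho(\tau,\cdot))$ inherits enough Hölder regularity to place $\rho(\tau,\cdot)$ in $D(R_{\e})=C_{\sharp}^{1}\cap C^{2}$ for each $\tau\in(0,T_{\e})$ and to make $\rho_{\tau}$ continuous with values in $X$. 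This yields $\rho\in C^{1,2}([0,T_{\e})\times[-\ell_0/2,\ell_0/2])$, and the periodicity conditions $D_{\eta}^{(j)}\rho(\cdot,-\ell_0/2)=D_{\eta}^{(j)}\rho(\cdot,\ell_0/2)$ for $j=0,1$ are automatically preserved because they are encoded in $D(R_{\e})$ and in the interpolation space $C_{\sharp}^{2\theta_0}$.

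The main obstacle is the fact that the nonlinearity contains the first spatial derivative $\rho_{\eta}$: the interpolation parameter $\theta_0$ must be chosen strictly above $1/2$ to ensure that $\rho_{\eta}$ belongs to a Hölder space on which $\mathscr{K}_{\e}(\tau,\cdot)$ acts Lipschitz-continuously with values in $X$, yet strictly below $1$ so that the fixed-point formulation remains compatible with the smoothing properties of the semigroup $e^{\tau R_{\e}}$. Verifying that the contraction constants can be made small on short time intervals, and that the resulting solution indeed takes values in $D(R_{\e})$ for $\tau>0$ even though $\rho_0=0$ only lies in $X$, is the technical point; however, the vanishing initial datum causes no compatibility problems and the required smoothing follows from the standard parabolic regularity theory for analytic semigroups.
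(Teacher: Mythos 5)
Your proposal follows essentially the same route as the paper's proof: recast \eqref{eq-2nd-order} as a semilinear abstract Cauchy problem for the sectorial operator $R_{\e}$ of Proposition \ref{prop-5.4}, use the identification $D_{R_{\e}}(\theta,\infty)=C_{\sharp}^{2\theta}$ together with the Lipschitz bounds of Proposition \ref{prop-H} to apply the standard local-existence theorem for semilinear parabolic equations with sectorial generators, and then bootstrap regularity via the smoothing of the analytic semigroup and the decomposition $R_{\e}=\frac{1-\e}{\e}D_{\eta\eta}+R_{\e}^{(1)}$ to recover continuity of $\rho_{\eta\eta}$. The paper compresses the first two steps into citations of \cite[Theorems 7.1.2 and 4.3.8]{lunardi}; your proposal reconstructs their content explicitly, with the same choice of intermediate space index $\theta_0\in(1/2,1)$ and the same treatment of the first-order nonlinearity.
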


\begin{proof}
Combining \cite[Theorems 7.1.2 and 4.3.8]{lunardi}, we can easily
show that Equation \eqref{eq-2nd-order} admits a unique solution
$\rho$, defined in a maximal time domain $[0,T_{\e})$, which belongs
to $C^{1,\beta}([0,T_{\e})\times [-\ell_0/2,\ell_0/2])$ for any
$\beta<2$, vanishes at $\tau=0$, and satisfies
$D_{\eta}^{(j)}\rho(\cdot,\ell_0/2)\equiv
D_{\eta}^{(j)}\rho(\cdot,\ell_0/2)$ for $j=0,1$. Moreover,
$R_{\e}(\rho)$ is continuous in $[0,T_{\e})\times
[-\ell_0/2,\ell_0/2]$. Since $R_{\e}(\rho)$ and
$\frac{1-\e}{\e}D_{\eta\eta}$ differ in the lower order operator
$R_{\e}^{(1)}$ (see the proof of Proposition \ref{prop-5.4}),
$\rho_{\eta\eta}\in C([0,T_{\e})\times [-\ell_0/2,\ell_0/2])$, as
well, and this completes the proof.
\end{proof}

\subsection{Proof of Theorem \ref{thm-exist-eps}}
\label{subsect-proof}
In this subsection, using some bootstrap
arguments, we show that the solution $\rho$ to the Equation
\eqref{eq-2nd-order}, whose existence has been guaranteed in Theorem
\ref{thm-semilinear} is actually a solution to Equation
\eqref{eqn:rho}. Of course, we just need to show that both the
functions $\rho_{\eta\eta}$ and $\rho_{\tau}$ belong to
$C^{0,2}([0,T_{\e})\times [-\ell_0/2,\ell_0/2])$. Throughout the
proof, we assume that $T'$ is any arbitrarily fixed real number in
the interval $[0,T_{\e})$.

To begin with, we observe that, from \eqref{K-eps}, Theorem
\ref{thm-4.1} and Proposition \ref{prop-inv-B}, it follows
immediately that the function ${\mathscr K}_{\e}$ is continuous in
$[0,T']$ with values in $D_{R_{\e}}(\theta,\infty)$ for any
$\theta\in (0,1)$ (see Proposition \ref{prop-5.4}), since the
operator ${\mathscr Z}_{\e}$ is bounded from $C_{\sharp}^s$ into
itself for any $s\ge 0$, by the proof of Proposition \ref{prop-H}
and an interpolation argument. Therefore, we can apply \cite[Thm.
4.3.8]{lunardi} and conclude that $\rho_{\tau},R_{\e}(\rho)$ are
bounded in $[0,T']$ with values in $C_{\sharp}^{2\theta}$ for any
$\theta\in (0,1)$. As it has been already remarked, $R_{\e}(\rho)$
and $\frac{1-\e}{\e}D_{\eta\eta}$ differ in the lower order operator
$R_{\e}^{(1)}$. Hence, $\rho_{\eta\eta}\in
C^{0,2\theta}([0,T']\times [-\ell_0/2,\ell_0/2])$ for any $\theta$
as above. In particular, $\rho_{\tau}$ and $\rho_{\eta\eta}$ are
continuously differentiable with respect to $\eta$ in $[0,T']\times
[-\ell_0/2,\ell_0/2]$.

Let us now set $\zeta=\rho_{\eta}$. The previous results show that
$\zeta\in C([0,T_{\e});C_{\sharp}^2)\cap C^1([0,T_{\e});C_{\sharp})$
and $\zeta_{\tau}=\rho_{\eta\tau}$. Clearly, $\zeta_{\tau}=
(1-\e^{-1})\zeta_{\eta\eta}+D_{\eta}R_{\e}^{(1)}\rho+
D_{\eta}{\mathscr K}_{\e}(\cdot,\rho_{\eta})$ in $[0,T_{\e})\times
[-\ell_0/2,\ell_0/2]$, and $\zeta(0,\cdot)\equiv 0$. Since the
operator $R_{\e}^{(1)}$ is bounded from $C_{\sharp}^{3+\theta}$ into
$C_{\sharp}^{3/2+\theta}$ for any $\theta\in (0,1)\setminus\{1/2\}$,
the function $D_{\eta}R_{\e}^{(1)}(\rho)$ is bounded in $[0,T']$
with values in $C_{\sharp}^{\alpha}$ for any $\alpha<3/2$. Since the
function $D_{\eta}{\mathscr K}_{\e}(\cdot,\rho_{\eta}^2)$ is bounded
in $[0,T']$ with values in $C_{\sharp}^{\alpha}$ as well, it follows
that $D_{\eta}R_{\e}^{(1)}(\rho)+ D_{\eta}{\mathscr
K}_{\e}(\cdot,\rho_{\eta}(\tau,\cdot))$ is bounded in $[0,T']$ with
values in $C_{\sharp}^{\alpha}$ for any $\alpha$ as above. Hence,
Theorem 4.3.9(iii) of \cite{lunardi} implies that the functions
$\zeta_{\tau}$ and $\zeta_{\eta\eta}$ are bounded (in fact,
continuous) in $[0,T']$ with values in $C_{\sharp}^{\alpha}$. This
completes the proof.

\section{Uniform existence of $\rho$ and proof of the main result}
\setcounter{equation}{0} \label{sect-6} So far we have only proved a
local existence-uniqueness result for Equation \eqref{eqn:rho}. In
this section, we want to prove that, for any fixed $T>0$, the local
solution $\rho$ exists in the whole of $[0,T]$, at least for
sufficiently small value of $\e$. The main tool in this direction is
represented by the {\it a priori} estimates in the next subsection.

\subsection{A priori estimates}
\label{subsect-apriori}
The main result of this subsection is
contained in the following theorem.

\begin{theorem}
\label{cor-apriori-estim} For any $T>0$, there exist
$\varepsilon_{0}=\e_0(T)\in (0,1/2)$ and $K=K(T)>0$ such that, if $\rho\in {\mathscr Y}_T$
$($see Definition $\ref{def-YT})$ is a solution to Equation
\eqref{eqn:rho}, then
\begin{align*}
\sup_{{\tau\in (0,T]}\atop{\eta\in [-\ell_0/2,\ell_0/2]}}
|\rho_{\eta}&(\tau,\eta)| +\sup_{\tau\in (0,T]}
\int_{-\frac{\ell_0}{2}}^{\frac{\ell_0}{2}}(\rho_{\eta\eta}(\tau,\eta))^2d\eta\notag\\
&+\int_0^T\int_{-\frac{\ell_0}{2}}^{\frac{\ell_0}{2}}(\rho_{\tau}(\tau,\eta))^2
d\eta d\tau
+\int_0^T\int_{-\frac{\ell_0}{2}}^{\frac{\ell_0}{2}}(\rho_{\tau\eta}(\tau,\eta))^2
d\eta d\tau \leq K,
\end{align*}
for all $\tau\in(0,T]$, whenever $\varepsilon \leq\varepsilon_{0}$.
\end{theorem}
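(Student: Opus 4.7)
The plan is a parabolic energy argument on equation \eqref{eqn:rho} that exploits the self-adjoint positive Fourier-multiplier structure of ${\mathscr B}_{\e}$ (Lemma \ref{operatorB-1}) and the coercive fourth-order dissipation provided by $-{\mathscr S}_{\e}$ (symbol \eqref{sek}). Pairing \eqref{eqn:rho} with $\rho$ and with $\rho_{\tau}$ in $L^{2}(-\ell_{0}/2,\ell_{0}/2)$ yields, by the symmetry of ${\mathscr B}_{\e}$ and its commutation with $\partial_{\tau}$, the two identities
\begin{align*}
\tfrac{1}{2}\tfrac{d}{d\tau}\langle{\mathscr B}_{\e}\rho,\rho\rangle+\langle-{\mathscr S}_{\e}\rho,\rho\rangle&=\langle{\mathscr N}_{\e}(\rho,\Phi)+{\mathscr H}_{\e}(\Phi),\rho\rangle,\\
\langle{\mathscr B}_{\e}\rho_{\tau},\rho_{\tau}\rangle+\tfrac{1}{2}\tfrac{d}{d\tau}\langle-{\mathscr S}_{\e}\rho,\rho\rangle&=\langle{\mathscr N}_{\e}(\rho,\Phi)+{\mathscr H}_{\e}(\Phi),\rho_{\tau}\rangle,
\end{align*}
where ${\mathscr N}_{\e}(\rho,\Phi):=-\Phi_{\eta}\rho_{\eta}-\frac{\e}{2}(\rho_{\eta})^{2}+{\mathscr G}_{\e}((\Phi_{\eta}+\e\rho_{\eta})^{2})$. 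From the symbol asymptotics $b_{\e,k}\sim 3\e\lambda_{k}$ and $s_{\e,k}\sim -48(1-\e)\e^{2}\lambda_{k}^{2}$, combined with the strict positivity of $b_{\e,k}$ and of $-s_{\e,k}$ for $k\geq 1$, one obtains the $\e$-uniform coercivity bounds
$$
\langle{\mathscr B}_{\e}u,u\rangle\geq c\|u\|_{2}^{2},\qquad\langle-{\mathscr S}_{\e}u,u\rangle\geq c\|u_{\eta\eta}\|_{2}^{2}-C\|u\|_{2}^{2},
$$
valid for all $\ell_{0}$-periodic $u\in H^{2}$, with $c,C$ independent of $\e\in(0,\e_{0}]$.

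The right-hand sides are then controlled term by term: the linear piece $-\Phi_{\eta}\rho_{\eta}$ by Cauchy--Schwarz and the uniform bound on $\Phi_{\eta}$ from Theorem \ref{thm-4.1}; the forcing ${\mathscr H}_{\e}(\Phi)$ and the $\Phi$-only part of ${\mathscr N}_{\e}$ by Proposition \ref{prop-H} together with the $C^{s}$-bounds on $\Phi$; the mixed contribution $2\e{\mathscr G}_{\e}(\Phi_{\eta}\rho_{\eta})$ by its $\e$-prefactor; and the genuine nonlinearity $\frac{\e}{2}(\rho_{\eta})^{2}$---the only potentially harmful term---by the Gagliardo--Nirenberg interpolation $\|\rho_{\eta}\|_{\infty}\leq C(\|\rho_{\eta\eta}\|_{2}^{1/2}\|\rho\|_{2}^{1/2}+\|\rho\|_{2})$, whose combination with the $\e$-factor makes the resulting contribution absorbable into $c\|\rho_{\eta\eta}\|_{2}^{2}$ on the left. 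Adding a sufficiently large multiple of the first identity to the second and using $\rho(0,\cdot)=0$ produces a differential inequality of the form
$$
\tfrac{d}{d\tau}{\mathscr E}_{\e}(\rho)+c\bigl(\|\rho_{\eta\eta}\|_{2}^{2}+\|\rho_{\tau}\|_{2}^{2}\bigr)\leq C_{1}{\mathscr E}_{\e}(\rho)+C_{2},
$$
with ${\mathscr E}_{\e}(\rho)\simeq \|\rho\|_{2}^{2}+\|\rho_{\eta\eta}\|_{2}^{2}$ uniformly in small $\e$. Grönwall's lemma on $[0,T]$ yields the bounds on $\sup_{\tau}\|\rho_{\eta\eta}\|_{2}$ and on $\int_{0}^{T}\|\rho_{\tau}\|_{2}^{2}d\tau$; the Sobolev embedding $H^{2}\hookrightarrow C^{1}$ then supplies the pointwise control of $\rho_{\eta}$.

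For the remaining estimate $\int_{0}^{T}\|\rho_{\tau\eta}\|_{2}^{2}d\tau$, I would differentiate \eqref{eqn:rho} once in $\eta$ (all operators involved are either Fourier multipliers commuting with $\partial_{\eta}$ or multiplications by $\Phi$ and its derivatives) and repeat the $\rho_{\tau}$-pairing argument for the equation satisfied by $\rho_{\eta}$. The commutators $[{\mathscr B}_{\e},\partial_{\eta}]$, $[{\mathscr S}_{\e},\partial_{\eta}]$, $[{\mathscr G}_{\e},\partial_{\eta}]$ vanish, while the nonlinear commutators $\Phi_{\eta\eta}\rho_{\eta}$ and $\e\rho_{\eta}\rho_{\eta\eta}$ are absorbed via the $H^{2}$-bound on $\rho$ just obtained.

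The main obstacle is maintaining $\e$-uniformity of the coercivity and of all remainder estimates, since both symbols degenerate qualitatively as $\e\to 0$: the high-frequency growth of $b_{\e,k}$ disappears, and the dissipation $-s_{\e,k}$ is only of size $\e^{2}\lambda_{k}^{2}$. The resolution is a Fourier-multiplier decomposition of ${\mathscr B}_{\e}$, ${\mathscr S}_{\e}$, ${\mathscr G}_{\e}$ into their $\e=0$ limits (which reproduce the K--S energy estimates) plus bounded corrections, aided by the resolvent estimate \eqref{estim-R} for the nonlocal pieces involving $R(0,L_{\e})$; the $\e$-factor in front of the genuinely nonlinear $(\rho_{\eta})^{2}$ is precisely what renders it subcritical on the fixed interval $[0,T]$.
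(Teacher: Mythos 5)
Your overall strategy---energy identities tested against $\rho$ and $\rho_\tau$, Fourier-multiplier coercivity of $\mathscr{B}_\e$ and $-\mathscr{S}_\e$, term-by-term control of the remainders---is in the same family as the paper's, which tests Equation \eqref{eqn:rho} against $\rho_\tau$ only and converts the $\mathscr{S}_\e$-term into $\tfrac{3}{2}(1-\e)\tfrac{d}{d\tau}\|\rho_{\eta\eta}\|_2^2$. However, there is a genuine gap at the decisive step. Your claimed differential inequality $\tfrac{d}{d\tau}\mathscr{E}_\e(\rho)+c(\|\rho_{\eta\eta}\|_2^2+\|\rho_\tau\|_2^2)\le C_1\mathscr{E}_\e(\rho)+C_2$ is \emph{linear} in $\mathscr{E}_\e$, and this cannot be achieved. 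The genuinely nonlinear term $\tfrac{\e}{2}(\rho_\eta)^2$ paired against $\rho_\tau$ produces, after Gagliardo--Nirenberg and Young, a contribution of the form $C\e^2\|\rho_{\eta\eta}\|_2^4$ (or, in your $\rho$-pairing, even a degree-six term in $\|\rho\|_2$). The $\e$-prefactor does not make this absorbable into $c\|\rho_{\eta\eta}\|_2^2$: at fixed $\e$ it is a Riccati-type term, $\e$-small but still superlinear. The paper closes precisely by invoking the ODE comparison Lemma~\ref{lem-5} (the BFHLS lemma), which says that a solution of $A_\e'\le C_0+C_1A_\e+C_2\e A_\e^2$, $A_\e(0)\le A_0$, remains bounded on $[0,T]$ \emph{provided $\e\le\e_0(T)$}; it is this lemma, not Gr\"onwall, that converts the $\e$-smallness into uniform-in-$\e$ boundedness on a fixed time interval, and is the mechanism behind the sentence ``there exists $\e_0=\e_0(T)$.'' Without it your proof hits a breakdown at this point: Gr\"onwall applied to a linear inequality that does not hold.

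A secondary remark: you do not actually need to differentiate the equation in $\eta$ to control $\int_0^T\|\rho_{\tau\eta}\|_2^2\,d\tau$. The paper extracts it directly from the coercivity of $\mathscr{B}_\e$ in the $\rho_\tau$-pairing; since $b_{\e,k}\ge 1+\e\lambda_k$ (proof of Lemma~\ref{lem-1}), one gets $\int\mathscr{B}_\e(\rho_\tau)\rho_\tau\,d\eta\ge\|\rho_\tau\|_2^2+3\e\|\rho_{\tau\eta}\|_2^2$, and after absorbing the $\e$- and $\e^2$-weighted right-hand terms (\eqref{giro-0}) this yields the $\rho_{\tau\eta}$ bound with no extra derivative of the equation. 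Your alternative plan of differentiating in $\eta$ and re-pairing would, if pursued, require an additional a priori bound on $\|\rho_{\eta\eta\eta}\|_2$ and a fresh coercivity estimate for the differentiated problem; it is feasible but lengthier, and in any case it would still stumble on the same Riccati-type closure issue.
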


The proof of Theorem \ref{cor-apriori-estim} is obtained employing
an energy method. Let $\rho\in {\mathscr Y}_T$ solve
\eqref{eqn:rho}, i.e., the equation
\begin{equation}
\label{eqn:rho-1} \frac{\partial}{\partial \tau}{\mathscr
B}_{\e}(\rho) = {\mathscr S}_{\e}(\rho) -\Phi_\eta\rho_\eta
-\frac{\e}{2}(\rho_\eta)^2 + {\mathscr
G}_{\e}((\Phi_{\eta}+\e\rho_{\eta})^2)+{\mathscr H}_{\e}(\Phi),
\end{equation}
for some $T>0$. Multiplying both the sides of \eqref{eqn:rho-1} by
$\rho_{\tau}$ and integrating over $[-\ell_0/2,\ell_0/2]$, we get
\begin{align}
\int_{-\frac{\ell_0}{2}}^{\frac{\ell_0}{2}}{\mathscr
B}_{\e}(\rho_{\tau})\,\rho_{\tau}d\eta=&
\int_{-\frac{\ell_0}{2}}^{\frac{\ell_0}{2}}{\mathscr
S}_{\e}(\rho)\rho_{\tau}d\eta
-\int_{-\frac{\ell_0}{2}}^{\frac{\ell_0}{2}}\Phi_\eta\rho_\eta\rho_{\tau}d\eta
-\frac{\e}{2}\int_{-\frac{\ell_0}{2}}^{\frac{\ell_0}{2}}(\rho_\eta)^2 \rho_{\tau}d\eta\notag\\
& +\int_{-\frac{\ell_0}{2}}^{\frac{\ell_0}{2}}{\mathscr
G}_{\e}((\Phi_\eta+\e\rho_\eta)^2)\rho_{\tau}d\eta
+\int_{-\frac{\ell_0}{2}}^{\frac{\ell_0}{2}}{\mathscr
H}_{\e}(\Phi)\rho_{\tau}d\eta.
\label{int-by-parts}
\end{align}

Using the very definition of the operator ${\mathscr S}_{\e}$ (see
\eqref{S-ve}) and then integrating by parts, yields
\begin{align*}
&\int_{-\frac{\ell_0}{2}}^{\frac{\ell_0}{2}}{\mathscr S}_{\e}(\rho)\rho_{\tau}d\eta\\
=& \int_{-\frac{\ell_0}{2}}^{\frac{\ell_0}{2}}\left\{-3(1-\e)
\rho_{\eta\eta\eta\eta}-\rho_{\eta\eta}+3\e
\left(D_{\eta\eta}R(0,L_{\varepsilon})
[\rho_{\eta\eta}({\bf T}'-{\bf U})]\right)(\cdot,0,\cdot)\right\}\rho_{\tau}d\eta\\
=& -\frac{3}{2}(1-\e)
\frac{d}{dt}\int_{-\frac{\ell_0}{2}}^{\frac{\ell_0}{2}}(\rho_{\eta\eta})^2d\eta
- \int_{-\frac{\ell_0}{2}}^{\frac{\ell_0}{2}}\rho_{\eta\eta}\rho_{\tau}d\eta\\
& + 3\e\int_{-\frac{\ell_0}{2}}^{\frac{\ell_0}{2}}
\left(D_{\eta\eta}R(0,L_{\varepsilon})[ \rho_{\eta\eta}({\bf
T}'-{\bf U})]\right)(\cdot,0,\cdot)\rho_{\tau}d\eta.
\end{align*}
Therefore, we can write Equation \eqref{int-by-parts} in the following equivalent form:
\begin{align}
&\frac{3}{2}(1-\e)
\frac{d}{d\tau}\int_{-\frac{\ell_0}{2}}^{\frac{\ell_0}{2}}(\rho_{\eta\eta})^2d\eta
+ \int_{-\frac{\ell_0}{2}}^{\frac{\ell_0}{2}}{\mathscr B}_{\e}(\rho_\tau)\,\rho_\tau d\eta\notag\\
=&-\frac{\e}{2}\int_{-\frac{\ell_0}{2}}^{\frac{\ell_0}{2}}(\rho_\eta)^2\rho_{\tau}d\eta-
\int_{-\frac{\ell_0}{2}}^{\frac{\ell_0}{2}}\rho_{\eta\eta}\rho_{\tau}d\eta\notag\\
&  + 3\e\int_{-\frac{\ell_0}{2}}^{\frac{\ell_0}{2}}
\left(D_{\eta\eta}R(0,L_{\varepsilon})[
\rho_{\eta\eta}({\bf T}'-{\bf U})]\right)(\cdot,0,\cdot)\rho_{\tau}d\eta\notag\\
&+ \int_{-\frac{\ell_0}{2}}^{\frac{\ell_0}{2}}{\mathscr
G}_{\e}((\Phi_\eta+\e\rho_\eta)^2)\rho_{\tau}d\eta
-\int_{-\frac{\ell_0}{2}}^{\frac{\ell_0}{2}}\Phi_{\eta}\rho_{\eta}\rho_{\tau}d\eta
+\int_{-\frac{\ell_0}{2}}^{\frac{\ell_0}{2}}{\mathscr
H}_{\e}(\Phi)\rho_{\tau}d\eta. \label{stima-import}
\end{align}
In the following lemmata, we estimate the terms
\begin{align*}
&{\mathscr I}_1:=\int_{-\frac{\ell_0}{2}}^{\frac{\ell_0}{2}}{\mathscr
B}_{\e}(\rho_\tau)\,\rho_\tau d\eta,\\
&{\mathscr I}_2:=\int_{-\frac{\ell_0}{2}}^{\frac{\ell_0}{2}}
\left(D_{\eta\eta}R(0,L_{\varepsilon})[
\rho_{\eta\eta}({\bf T}'-{\bf U})]\right)(\cdot,0,\cdot)\rho_{\tau}d\eta,\\
&{\mathscr I}_3:=\int_{-\frac{\ell_0}{2}}^{\frac{\ell_0}{2}}{\mathscr G}_{\e}((\Phi_\eta+\e\rho_\eta)^2)\rho_{\tau}d\eta,\\
&{\mathscr
I}_4:=\int_{-\frac{\ell_0}{2}}^{\frac{\ell_0}{2}}{\mathscr
H}_{\e}(\Phi)\rho_{\tau}d\eta.
\end{align*}
The main issue is to control ${\mathscr I}_1$. We have the following
\begin{lemma}
\label{lem-1}
It holds that
\begin{eqnarray*}
{\mathscr I}_1(\tau) \geq
\int_{-\frac{\ell_0}{2}}^{\frac{\ell_0}{2}} (\rho(\tau,\cdot))^2 d\eta
+3\e\int_{-\frac{\ell_0}{2}}^{\frac{\ell_0}{2}}(\rho_{\eta}(\tau,\cdot))^2d\eta,
\end{eqnarray*}
for any $\tau\in [0,T]$ and any $\e\in (0,1/2]$.
\end{lemma}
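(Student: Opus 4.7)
The natural approach is to diagonalize ${\mathscr B}_\varepsilon$ in the orthonormal eigenbasis $\{w_k\}_{k\ge 0}$ of $A=D_{\eta\eta}$ with periodic boundary conditions from Section~\ref{sect-notation-spaces}. Since $\rho\in{\mathscr Y}_T$, the function $\rho_\tau(\tau,\cdot)$ is $\ell_0$-periodic and regular enough to admit the Fourier expansion $\rho_\tau(\tau,\cdot)=\sum_{k\ge 0}\widehat{\rho_\tau}(k,\tau)\,w_k$. By Lemma~\ref{operatorB-1} the operator ${\mathscr B}_\varepsilon$ is diagonal in this basis with multiplier $b_{\varepsilon,k}$ on the $k$-th mode, so Parseval's identity yields the clean representation
\begin{equation*}
{\mathscr I}_1(\tau)=\int_{-\ell_0/2}^{\ell_0/2}{\mathscr B}_\varepsilon(\rho_\tau)\,\rho_\tau\,d\eta=\sum_{k=0}^{+\infty}b_{\varepsilon,k}\,\bigl|\widehat{\rho_\tau}(k,\tau)\bigr|^{2}.
\end{equation*}

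The whole statement then reduces to the pointwise symbol inequality
\begin{equation*}
b_{\varepsilon,k}\ \ge\ 1+3\varepsilon\lambda_k,\qquad k\ge 0,
\end{equation*}
which I expect to be the main (and essentially the only) obstacle. I would prove it by direct algebra: setting $X:=X_{\varepsilon,k}=\sqrt{1+4\varepsilon\lambda_k}\ge 1$, so that $3\varepsilon\lambda_k=\tfrac34(X^{2}-1)$, multiplying through by the positive factor $X+2$ and inserting the explicit formula \eqref{bek}, the desired bound becomes
\begin{equation*}
3(X+1)(X^{2}+2X-1)-(3X^{2}+1)(X+2)=(3X+5)(X-1)\ge 0,
\end{equation*}
which clearly holds for every $X\ge 1$, with equality exactly when $k=0$ (there $\lambda_0=0$ and $X=1$). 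The sharpness of this polynomial identity at $X=1$ is what pins down $1$ and $3\varepsilon$ as the optimal constants.

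Summing mode by mode and applying Parseval once more in the reverse direction, with $\lambda_k$ the eigenvalues of $-D_{\eta\eta}$, gives
\begin{equation*}
{\mathscr I}_1(\tau)\ \ge\ \sum_{k\ge 0}(1+3\varepsilon\lambda_k)\bigl|\widehat{\rho_\tau}(k,\tau)\bigr|^{2}=\int_{-\ell_0/2}^{\ell_0/2}\rho_\tau^{\,2}\,d\eta+3\varepsilon\int_{-\ell_0/2}^{\ell_0/2}\rho_{\tau\eta}^{\,2}\,d\eta,
\end{equation*}
which is the desired lower bound for ${\mathscr I}_1(\tau)$ (the right-hand side being read in terms of $\rho_\tau$ and $\rho_{\tau\eta}$, as is forced by the definition ${\mathscr I}_1=\int{\mathscr B}_\varepsilon(\rho_\tau)\,\rho_\tau\,d\eta$). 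Everything is straightforward Fourier bookkeeping once the polynomial inequality $(3X+5)(X-1)\ge 0$ is in hand.
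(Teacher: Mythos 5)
Your proof is correct and follows essentially the same route as the paper's: diagonalize ${\mathscr B}_{\e}$ via Parseval and reduce everything to the pointwise symbol inequality $b_{\e,k}\ge 1+3\e\lambda_k$, verified through the factorization $(3X+5)(X-1)\ge 0$. Your algebra is in fact more careful than the printed proof, which states the intermediate bound as $h(s)\ge (3s+1)/4$ where $(3s^2+1)/4$ is meant, and you correctly read the right-hand side of the lemma in terms of $\rho_{\tau}$ and $\rho_{\tau\eta}$, as its later use requires.
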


\begin{proof}
Of course, we can limit ourselves to proving the estimate with
$\rho_{\tau}(\tau,\cdot)$ being replaced by $\varphi\in H^2$.

It is immediate to check that
\begin{eqnarray*}
\int_{-\frac{\ell_0}{2}}^{\frac{\ell_0}{2}}{\mathscr
B}_{\e}(\phi)\,\phi d\eta=
\sum_{k=0}^{+\infty}b_{\e,k}|\hat\varphi(k)|^2,
\end{eqnarray*}
where the symbol $(b_{\e,k})$ of the operator ${\mathscr B}_{\e}$ is
defined by \eqref{bek}. Note that $b_{\e,k}=h(X_{\e,k})$ for any
$k=0,1,\ldots$, where the function $h:[1,+\infty)\to\R$ is defined
by
\begin{eqnarray*}
h(s)=\frac{3}{4}\frac{(s+1)(s^2+2s-1)}{s+2},\qquad\;\,s\ge 1.
\end{eqnarray*}
Since $h(s)\ge (3s+1)/4$ for any $s\ge 1$, we can estimate
\begin{align*}
\int_{-\frac{\ell_0}{2}}^{\frac{\ell_0}{2}}{\mathscr
B}_{\e}(\phi)\,\phi d\eta\ge \sum_{k=0}^{+\infty}
(1+\e\lambda_k)|\hat\varphi(k)|^2
=&\int_{-\frac{\ell_0}{2}}^{\frac{\ell_0}{2}}|\varphi|^2d\eta
-3\e\int_{-\frac{\ell_0}{2}}^{\frac{\ell_0}{2}}\varphi\varphi_{\eta\eta}d\eta\\
=&\int_{-\frac{\ell_0}{2}}^{\frac{\ell_0}{2}}|\varphi|^2d\eta
+3\e\int_{-\frac{\ell_0}{2}}^{\frac{\ell_0}{2}}|\varphi_{\eta}|^2d\eta,
\end{align*}
and we are done.
\end{proof}

We now consider the terms ${\mathscr I}_2$, ${\mathscr I}_3$ and ${\mathscr I}_4$.

\begin{lemma}
\label{lem-2} For any $\tau\in [0,T]$ and any $\e\in (0,1/2]$, it
holds that
\begin{eqnarray*}
|{\mathscr I}_2(\tau)| \le
\frac{1}{12\e}\|\rho_{\tau}(\tau,\cdot)\|_2\|\rho_{\eta\eta}(\tau,\cdot)\|_2.
\end{eqnarray*}
\end{lemma}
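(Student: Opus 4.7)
The plan is to view $\mathscr{I}_{2}(\tau)$ as the $L^{2}(-\ell_{0}/2,\ell_{0}/2)$ pairing of $\rho_{\tau}(\tau,\cdot)$ with $A_{\varepsilon}\rho_{\eta\eta}(\tau,\cdot)$, where $A_{\varepsilon}$ is the linear operator
$$A_{\varepsilon}\varphi:=\bigl(D_{\eta\eta}R(0,L_{\varepsilon})[\varphi({\bf T}'-{\bf U})]\bigr)(\cdot,0,\cdot).$$
The Cauchy--Schwarz inequality then gives $|\mathscr{I}_{2}(\tau)|\le \|A_{\varepsilon}\rho_{\eta\eta}(\tau,\cdot)\|_{2}\,\|\rho_{\tau}(\tau,\cdot)\|_{2}$, so the lemma reduces to the uniform $L^{2}\to L^{2}$ operator bound $\|A_{\varepsilon}\|\le \frac{1}{12\varepsilon}$.

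Because ${\bf T}'-{\bf U}$ depends only on the $x$-variable and $D_{\eta\eta}$ commutes with $R(0,L_{\varepsilon})$ on each Fourier fiber, the operator $A_{\varepsilon}$ is a Fourier multiplier with respect to the basis $\{w_{k}\}$; its $L^{2}\to L^{2}$ operator norm therefore equals $\sup_{k\ge 0}|m_{\varepsilon,k}|$, where $m_{\varepsilon,k}$ is its $k$-th symbol. By formula \eqref{u2k} from Lemma \ref{operatorB-1}, the symbol of $\varphi\mapsto (R(0,L_{\varepsilon})[\varphi({\bf T}'-{\bf U})])(\cdot,0,\cdot)$ equals $\frac{2}{3(X_{\varepsilon,k}+1)(X_{\varepsilon,k}+2)}$, and composing with $D_{\eta\eta}$ multiplies by $-\lambda_{k}$, yielding
$$m_{\varepsilon,k}=-\frac{2\lambda_{k}}{3(X_{\varepsilon,k}+1)(X_{\varepsilon,k}+2)}.$$

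The last step is an algebraic simplification: invoking \eqref{Xk}, we have $4\varepsilon\lambda_{k}=X_{\varepsilon,k}^{2}-1=(X_{\varepsilon,k}-1)(X_{\varepsilon,k}+1)$, so one factor of $(X_{\varepsilon,k}+1)$ cancels and
$$|m_{\varepsilon,k}|=\frac{X_{\varepsilon,k}-1}{6\varepsilon(X_{\varepsilon,k}+2)}.$$
An elementary scalar bound on the function $s\mapsto (s-1)/(s+2)$ for $s\ge 1$ then delivers $|m_{\varepsilon,k}|\le \frac{1}{12\varepsilon}$; Parseval's identity gives $\|A_{\varepsilon}\rho_{\eta\eta}\|_{2}\le (12\varepsilon)^{-1}\|\rho_{\eta\eta}\|_{2}$, and combining with the initial Cauchy--Schwarz step concludes the argument. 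The only mildly delicate point is the careful algebraic manipulation of the symbol to expose the cancellation between the large factor $\lambda_{k}$ in the numerator and the corresponding growth of $(X_{\varepsilon,k}+1)(X_{\varepsilon,k}+2)$ in the denominator; once that is done, the scalar estimate and Parseval are routine.
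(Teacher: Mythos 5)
Your overall strategy is the same one the paper uses: pair $\rho_{\tau}$ against the Fourier multiplier $A_{\varepsilon}$ acting on $\rho_{\eta\eta}$, apply Cauchy--Schwarz and Parseval, compute the symbol via \eqref{u2k}, and absorb the $\lambda_{k}$ growth using $4\varepsilon\lambda_{k}=X_{\varepsilon,k}^{2}-1$. The computation up to $|m_{\varepsilon,k}|=\dfrac{X_{\varepsilon,k}-1}{6\varepsilon(X_{\varepsilon,k}+2)}$ is correct, and one can confirm the symbol $-\dfrac{2\lambda_{k}}{3(X_{\varepsilon,k}+1)(X_{\varepsilon,k}+2)}$ independently from formula \eqref{sek}.

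The gap is the last ``elementary scalar bound.'' You assert that $s\mapsto (s-1)/(s+2)$ on $[1,+\infty)$ delivers $|m_{\varepsilon,k}|\le\frac{1}{12\varepsilon}$, which requires $\frac{s-1}{s+2}\le\frac{1}{2}$. That inequality fails for $s>4$: the function is increasing with $\sup_{s\ge 1}\frac{s-1}{s+2}=1$, attained in the limit $s\to+\infty$, and indeed $X_{\varepsilon,k}=\sqrt{1+4\varepsilon\lambda_{k}}\to+\infty$ as $k\to+\infty$ no matter how small $\varepsilon$ is. Following your own (otherwise correct) computation, the best uniform bound you can extract is $|m_{\varepsilon,k}|\le\frac{1}{6\varepsilon}$, so the Parseval step yields $|{\mathscr I}_{2}(\tau)|\le\frac{1}{6\varepsilon}\|\rho_{\tau}(\tau,\cdot)\|_{2}\|\rho_{\eta\eta}(\tau,\cdot)\|_{2}$, not the claimed $\frac{1}{12\varepsilon}$. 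This is not a cosmetic slip inside your argument: no scalar inequality closes the remaining factor of $2$, so the proof as written does not give the lemma's stated constant. (Whether the published constant itself is off by a factor of $2$ is a separate matter — what is used later is only the $\varepsilon^{-1}$ scaling, since ${\mathscr I}_{2}$ enters \eqref{stima-import} with a prefactor $3\varepsilon$ — but as a verification of the stated inequality, the last step of your argument does not work.)
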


\begin{proof}
As it is immediately seen, for any $\tau\in [0,T]$ we can estimate
\begin{align*}
&\left |\int_{-\frac{\ell_0}{2}}^{\frac{\ell_0}{2}}
\left(D_{\eta\eta}R(0,L_{\varepsilon})[ \rho_{\eta\eta}({\bf
T}'-{\bf U})]\right)(\tau,0,\cdot)\rho_{\tau}(\tau,\cdot)d\eta\right |\\
\le& \frac{1}{3\e}\|\varrho_{\tau}(\tau,\cdot)\|_2
\|\left(D_{\eta\eta}R(0,L_{\varepsilon})[ \rho_{\eta\eta}({\bf
T}'-{\bf U})]\right)(\tau,0,\cdot)\|_2.
\end{align*}
To compute the $L^2$-norm of the function
$\left(D_{\eta\eta}R(0,L_{\varepsilon})[ \rho_{\eta\eta}({\bf
T}'-{\bf U})]\right)(\tau,0,\cdot)$, we take advantage of Formula
\eqref{u2k}, which allows us to estimate
\begin{align*}
\|\left(D_{\eta\eta}R(0,L_{\varepsilon})[ \rho_{\eta\eta}({\bf
T}'-{\bf U})]\right)(\tau,0,\cdot)\|_2^2 =&
\frac{1}{4}\sum_{k=0}^{+\infty}\left
|\frac{X_k+1}{X_k+2}\lambda_k^2\hat\varphi(\tau,k)\right |^2\\
\le &\frac{1}{4}\sum_{k=0}^{+\infty}\lambda_k^2|\hat\varphi(\tau,k)|^2
=\frac{1}{4}\|\varphi_{\eta\eta}(\tau,\cdot)\|_2^2,
\end{align*}
where, as usual, $X_k=\sqrt{1+4\e\lambda_k}$ and $\hat\rho(\tau,k)$ is the $k$-th Fourier coefficient of the function
$\rho(\tau,\cdot)$. This accomplishes the
proof.
\end{proof}
\begin{lemma}
\label{lem-3} There exists a positive constant $C$, independent of
$\e\in (0,1/2]$ and $\tau\in [0,T]$, such that
\begin{align*}
|{\mathscr I}_3(\tau)|\le&
C\Big (\|\rho_{\tau}(\tau,\cdot)\|_2+\|\rho_{\tau}(\tau,\cdot)\|_2\|\rho_{\eta\eta}(\tau,\cdot)\|_2
+\e\|\rho_{\tau}(\tau,\cdot)\|_2\|\rho_{\eta\eta}(\tau,\cdot)\|_2^2\notag\\
&\qquad\; +\e\|\rho_{\tau\eta}(\tau,\cdot)\|_2\|\rho_{\eta\eta}(\tau,\cdot)\|_2
+\e^2\|\rho_{\tau\eta}(\tau,\cdot)\|_2\|\rho_{\eta\eta}(\tau,\cdot)\|_2^2\Big ),
\end{align*}
for any $\tau\in [0,T]$.
\end{lemma}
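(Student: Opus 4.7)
The plan is to expand $(\Phi_\eta+\e\rho_\eta)^2 = \Phi_\eta^2 + 2\e\Phi_\eta\rho_\eta + \e^2\rho_\eta^2$, so that ${\mathscr I}_3(\tau)$ breaks into three integrals, and to bound each of them via the Fourier-multiplier representation of ${\mathscr G}_\e$ given by \eqref{fek}. The first integral $\int{\mathscr G}_\e(\Phi_\eta^2)\rho_\tau\,d\eta$ is immediate: the rational factor in \eqref{fek} is bounded on $X_k\ge 1$, hence $|g_k|\le C\lambda_k$ uniformly in $\e\in(0,1/2]$, so Plancherel gives $\|{\mathscr G}_\e u\|_2\le C\|u_{\eta\eta}\|_2$; combined with the $C^4$-bound on $\Phi$ furnished by Theorem \ref{thm-4.1} and Cauchy--Schwarz, this produces the first term $C\|\rho_\tau\|_2$ of the statement.

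For the two mixed integrals the key step is to split ${\mathscr G}_\e = \tfrac{3}{2}D_{\eta\eta} + \overline{\mathscr G}_\e$ in order to isolate the part of ${\mathscr G}_\e$ responsible for the $\e$-non-uniformity. From \eqref{fek} one finds that the symbol of $\overline{\mathscr G}_\e$ equals $\bar g_k = -\lambda_k(6X_k-2)/[2(X_k+1)(X_k+2)]$, and the pointwise inequality $X_k\ge\max(1,2\sqrt{\e\lambda_k})$ gives the uniform bound $|\bar g_k|\le 3\lambda_k/X_k\le (3/2)\sqrt{\lambda_k/\e}$. A Plancherel--Cauchy--Schwarz argument then yields the master inequality
\begin{equation*}
\Big|\int\overline{\mathscr G}_\e(\xi)\,\rho_\tau\,d\eta\Big|\le C\e^{-1/2}\|\xi_\eta\|_2\|\rho_\tau\|_2,
\end{equation*}
whose $\e^{-1/2}$ blow-up is precisely what the $\e$ and $\e^2$ prefactors in front of the mixed and quadratic pieces are designed to absorb.

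For the piece $2\e\int{\mathscr G}_\e(\Phi_\eta\rho_\eta)\rho_\tau\,d\eta$ I would integrate the $\tfrac{3}{2}D_{\eta\eta}$ summand by parts once (periodic boundary conditions) to transfer a derivative onto $\rho_\tau$; Cauchy--Schwarz together with the $C^2$-bound on $\Phi$ and the Poincar\'e inequality on the zero-mean periodic function $\rho_\eta$ gives term 4, namely $C\e\|\rho_{\tau\eta}\|_2\|\rho_{\eta\eta}\|_2$. The master inequality applied with $\xi=\Phi_\eta\rho_\eta$ controls the $\overline{\mathscr G}_\e$ summand: the $\e$ prefactor together with $\e\le 1/2$ reduces $C\sqrt{\e}\|\rho_\tau\|_2\|\rho_{\eta\eta}\|_2$ to term 2. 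An analogous split applied to $\e^2\int{\mathscr G}_\e(\rho_\eta^2)\rho_\tau\,d\eta$ produces after integration by parts a contribution of the form $C\e^2\int\rho_\eta\rho_{\eta\eta}\rho_{\tau\eta}\,d\eta$, which via $H^1\hookrightarrow L^\infty$ and Poincar\'e (so that $\|\rho_\eta\|_\infty\le C\|\rho_{\eta\eta}\|_2$) yields term 5, while the master inequality with $\xi=\rho_\eta^2$ and the $\e^2$ prefactor gives $C\e^{3/2}\|\rho_\tau\|_2\|\rho_{\eta\eta}\|_2^2\le C\e\|\rho_\tau\|_2\|\rho_{\eta\eta}\|_2^2$, which is term 3.

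The principal technical obstacle is the identification of the correct leading differential part $\tfrac{3}{2}D_{\eta\eta}$ of ${\mathscr G}_\e$ and the verification that the remainder $\overline{\mathscr G}_\e$ carries precisely the $\e^{-1/2}$ blow-up that is cancelled by the $\e$ and $\e^2$ prefactors coming from the expansion of the square; this amounts to a careful reading of the rational factor in $g_k$ using the lower bound $X_k\ge 2\sqrt{\e\lambda_k}$. Once this is in place, everything else is Cauchy--Schwarz together with standard one-dimensional Sobolev and Poincar\'e inequalities applied to the zero-mean periodic function $\rho_\eta$, and the constants depend only on $\ell_0$ and on norms of $\Phi$ controlled by Theorem \ref{thm-4.1}, so the bound is uniform in $\e\in(0,1/2]$.
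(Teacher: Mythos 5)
Your proof is correct and follows the paper's top-level structure exactly: you expand $(\Phi_\eta+\e\rho_\eta)^2$, split ${\mathscr I}_3$ into $J_1$, $J_2$, $J_3$, use the uniform $L^2$-bound $\|{\mathscr G}_\e u\|_2\le C\|u_{\eta\eta}\|_2$ for $J_1$, and for $J_2$, $J_3$ split ${\mathscr G}_\e=\tfrac32 D_{\eta\eta}+\overline{\mathscr G}_\e$, integrate the differential leading part by parts, and close with Poincar\'e--Wirtinger and $\|\rho_\eta\|_\infty\lesssim\|\rho_{\eta\eta}\|_2$. The one place where you genuinely diverge is the bound on the remainder $\overline{\mathscr G}_\e$, and this divergence is worth highlighting. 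The paper writes $g_k=-\tfrac32\lambda_k+\tfrac{1}{4\e}h(X_k)$ and asserts $0\le h\le 1$, hence an $L^2\!\to\!L^2$ bound $\|{\mathscr G}_\e^{(1)}\psi\|_2\le 3\|\psi\|_2$ in \eqref{form-3}. That assertion does not hold: from $\lambda_k=(X_k^2-1)/(4\e)$ one gets $\bar g_k=g_k+\tfrac32\lambda_k=-\tfrac{(X_k-1)(3X_k-1)}{4\e(X_k+2)}$, so $h(s)=-\tfrac{(s-1)(3s-1)}{s+2}$ grows linearly in $s$ (the paper's printed $s^2+2$ in the denominator is a typo, and even with that denominator $h$ tends to $3$, not to a value in $[0,1]$). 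Your bound $|\bar g_k|\le 3\lambda_k/X_k\le\tfrac32\sqrt{\lambda_k/\e}$, giving $\|\overline{\mathscr G}_\e\xi\|_2\le\tfrac{3}{2\sqrt{\e}}\|\xi_\eta\|_2$, is the correct statement. The paper's final estimates for $J_2$, $J_3$ are nevertheless the same as yours, but only because the Poincar\'e step in \eqref{estim-I2} already upgrades $\|\rho_\eta\|_2$ to $\|\rho_{\eta\eta}\|_2$, so the extra factor $\sqrt{\lambda_k/\e}$ (an $\e^{-1/2}$ together with one $\eta$-derivative) is exactly absorbed by the $\e$ and $\e^2$ prefactors and the derivative that Poincar\'e supplies for free. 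In short: your proof reproduces the lemma by essentially the same route, and in doing so corrects an overstated intermediate bound in the paper's argument.
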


\begin{proof} As in the proof of the previous lemma, it is enough to
estimate the $L^2$-norm of the function ${\mathscr
G}_{\e}((\Phi_\eta(\tau,\cdot)+\e\rho_\eta(\tau,\cdot))^2)$. For
this purpose, we observe that we can estimate the $L^2$-norm of the
function $ {\mathscr G}_{\e}(\psi)$, for any $\psi\in H^2$, by
\begin{align*}
\|{\mathscr G}_{\e}(\psi)\|_2^2=&\sum_{k=0}^{+\infty}
\lambda_k^2\left (\frac{3X_k^2+15X_k+4}{2(X_k+1)(X_k+2)}\right
)^2|\hat\psi(k)|^2\\
 \le &\frac{121}{36}\sum_{k=0}^{+\infty}\lambda_k^2|\hat\psi(k)|^2
\le 4\|\psi_{\eta\eta}\|_2^2,
\end{align*}
where $X_k=\sqrt{1+4\e\lambda_k}$ for any $k=0,1,\ldots$ It follows
that
\begin{equation}
\|{\mathscr G}_{\e}(\psi)\|_2\le 2\|\psi_{\eta\eta}\|_2.
\label{form-1}
\end{equation}
Moreover, the symbol $g_k$ can be split as follows:
\begin{eqnarray*}
g_k=-\frac{3}{2}\lambda_k+\frac{1}{4\e}h(X_k),\qquad\;\,k=0,1,\ldots,
\end{eqnarray*}
where the function $h:[1,+\infty)\to\R$ is defined by
\begin{eqnarray*}
h(s)=\frac{(3s-1)(s-1)}{s^2+2},\qquad\;\,s\ge 1.
\end{eqnarray*}
Clearly, $0\le h(s)\le
1$ for any $s\ge 1$. Hence, we can split
\begin{equation}
{\mathscr G}_{\e}(\psi)=\frac{3}{2}\psi_{yy}+
\frac{1}{4\e}{\mathscr G}_{\e}^{(1)}(\psi), \label{form-2}
\end{equation}
where the operator ${\mathscr G}_{\e}^{(1)}$ is well-defined in
$L^2$ and
\begin{equation}
\|{\mathscr G}_{\e}^{(1)}(\psi)\|_2\le 3\|\psi\|_2.
\label{form-3}
\end{equation}

We now split (for any arbitrarily fixed $\tau\in [0,T]$)
\begin{align*}
&\int_{-\frac{\ell_0}{2}}^{\frac{\ell_0}{2}}{\mathscr
G}_{\e}((\Phi_\eta(\tau,\cdot)+\e\rho_\eta(\tau,\cdot))^2)\rho_{\tau}(\tau,\cdot)d\eta\\
=&\int_{-\frac{\ell_0}{2}}^{\frac{\ell_0}{2}}{\mathscr
G}_{\e}((\Phi_\eta(\tau,\cdot))^2)\rho_{\tau}(\tau,\cdot)d\eta+2\e\int_{-\frac{\ell_0}{2}}^{\frac{\ell_0}{2}}{\mathscr
G}_{\e}(\Phi_\eta(\tau,\cdot)\rho_\eta(\tau,\cdot))\rho_{\tau}(\tau,\cdot)d\eta\\
&+\e^2\int_{-\frac{\ell_0}{2}}^{\frac{\ell_0}{2}}{\mathscr
G}_{\e}((\rho_\eta(\tau,\cdot))^2)\rho_{\tau}d\eta
:= J_1(\tau)+J_2(\tau)+J_3(\tau).
\end{align*}

To estimate $J_1$, we use Formula \eqref{form-1} and H\"older
inequality to get
\begin{equation}
|J_1(\tau)|\le
2\|\rho_{\tau}(\tau,\cdot)\|_2\|((\Phi_{\eta}(\tau,\cdot))^2)_{\eta\eta}\|_2.
\label{estim-I1}
\end{equation}
Estimating the terms $J_2$ and $J_3$ is a bit more tricky. Using
Formulae \eqref{form-2} and \eqref{form-3}, we get
\begin{align}
|J_2(\tau)|\le& 3\e\left |\int_{-\frac{\ell_0}{2}}^{\frac{\ell_0}{2}}
\rho_{\tau}(\tau,\cdot)(\Phi_{\eta}(\tau,\cdot)\rho_{\eta}(\tau,\cdot))_{\eta\eta}d\eta\right | +
\frac{3}{2}\|\Phi_{\eta}(\tau,\cdot)\rho_{\eta}(\tau,\cdot)\|_2\|\rho_{\tau}(\tau,\cdot)\|_2\notag\\
=&3\e\left |\int_{-\frac{\ell_0}{2}}^{\frac{\ell_0}{2}}
\rho_{\tau\eta}(\tau,\cdot)(\Phi_{\eta\eta}(\tau,\cdot)\rho_{\eta}(\tau,\cdot)+\Phi_{\eta}(\tau,\cdot)\rho_{\eta\eta}(\tau,\cdot))
d\eta\right|\notag\\
&+ \frac{3}{2}\|\Phi_{\eta}(\tau,\cdot)\rho_{\eta}(\tau,\cdot)\|_2\|\rho_{\tau}(\tau,\cdot)\|_2\notag\\
\le&
3\e\|\Phi_{\eta\eta}\|_{\infty}\|\rho_{\tau\eta}(\tau,\cdot)\|_2\|\rho_{\eta}(\tau,\cdot)\|_2
+3\e\|\Phi_{\eta}\|_{\infty}\|\rho_{\tau\eta}(\tau,\cdot)\|_2\|\rho_{\eta\eta}(\tau,\cdot)\|_2\notag\\
&+\frac{3}{2}\|\Phi_{\eta}\|_{\infty}\|\rho_{\eta}(\tau,\cdot)\|_2\|\rho_{\tau}(\tau,\cdot)\|_2.
\label{estim-I2-0}
\end{align}
Using a Poincar\'e-Wirtinger inequality, we can continue Estimate \eqref{estim-I2} and obtain that
\begin{equation}
|J_2(\tau)|\le C_1\left (\e\|\rho_{\tau\eta}(\tau,\cdot)\|_2\|\rho_{\eta\eta}(\tau,\cdot)\|_2+
\|\rho_{\eta\eta}(\tau,\cdot)\|_2\|\rho_{\tau}(\tau,\cdot)\|_2\right ). \label{estim-I2}
 \end{equation}
To estimate the term $I_3(\tau)$, we can argue similarly. Therefore,
\begin{align}
|J_3(\tau)|\le& \frac{3}{2}\e^2\left
|\int_{-\frac{\ell_0}{2}}^{\frac{\ell_0}{2}}\rho_{\tau}(\tau,\cdot)((\rho_{\eta}(\tau,\cdot))^2)_{\eta\eta}d\eta\right
|+\frac{3}{4}\e\int_{-\frac{\ell_0}{2}}^{\frac{\ell_0}{2}}|(\rho_{\eta}(\tau,\cdot))^2\rho_{\tau}(\tau,\cdot)|d\eta\notag\\
=&\frac{3}{2}\e^2\left
|\int_{-\frac{\ell_0}{2}}^{\frac{\ell_0}{2}}\rho_{\tau\eta}(\tau,\cdot)((\rho_{\eta}(\tau,\cdot))^2)_{\eta}d\eta\right
|+\frac{3}{4}\e\int_{-\frac{\ell_0}{2}}^{\frac{\ell_0}{2}}|(\rho_{\eta}(\tau,\cdot))^2\rho_{\tau}(\tau,\cdot)|d\eta\notag\\
\le&
3\e^2\|\rho_{\tau\eta}(\tau,\cdot)\|_2\|\rho_{\eta}(\tau,\cdot)\|_{\infty}\|\rho_{\eta\eta}(\tau,\cdot)\|_2
+\frac{3}{4}\e\|(\rho_{\eta}(\tau,\cdot))^2\|_2\|\rho_{\tau}(\tau,\cdot)\|_2\notag\\
\le& 3\e^2\sqrt{\ell_0}\|\rho_{\tau\eta}(\tau,\cdot)\|_2\|\rho_{\eta\eta}(\tau,\cdot)\|_2^2
+\frac{3}{4}\ell_0^{\frac{3}{2}}\e\|\rho_{\eta\eta}(\tau,\cdot)\|_2^2\|\rho_{\tau}(\tau,\cdot)\|_2.
\label{estim-I3}
\end{align}

Combining Estimates \eqref{estim-I1}, \eqref{estim-I2-0} and
\eqref{estim-I3} together, the assertion follows at once.
\end{proof}

\begin{lemma}
\label{lem-4} There exists a positive constant $C$, independent of
$\e\in (0,1/2]$ and $\tau\in [0,T]$, such that
\begin{align}
|{\mathscr I}_4(\tau)|\le&
C\Big (\|\Phi_{\tau\eta\eta}(\tau,\cdot)\|_2+\|\Phi_{\eta\eta\eta\eta}(\tau,\cdot)\|_2\Big )
\|\rho_{\tau}(\tau,\cdot)\|_2,\qquad\tau\in [0,T].
\label{stima-I4}
\end{align}
\end{lemma}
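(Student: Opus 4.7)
The plan is to apply Cauchy--Schwarz directly to the definition of $\mathscr I_4$, reducing the task to the uniform-in-$\e$ bound
\begin{eqnarray*}
\|\mathscr H_{\e}(\Phi(\tau,\cdot))\|_2\le C\bigl(\|\Phi_{\tau\eta\eta}(\tau,\cdot)\|_2+\|\Phi_{\eta\eta\eta\eta}(\tau,\cdot)\|_2\bigr).
\end{eqnarray*}
Two of the four pieces of $\mathscr H_\e(\Phi)$ are already of the desired form: the terms $3\Phi_{\eta\eta\eta\eta}$ and $\frac{13}{3}\Phi_{\tau\eta\eta}$ are controlled trivially in $L^2$ with constants independent of $\e$. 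The remaining two involve the resolvent operator $R(0,L_\e)$ and require a Fourier-multiplier computation exactly analogous to the one used in the proof of Lemma \ref{operatorB-1}.

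More precisely, for the third piece I would use formula \eqref{u2k}: the $k$-th Fourier coefficient of $\bigl(D_{\eta\eta}R(0,L_\e)[\Phi_{\eta\eta}({\bf T}'-{\bf U})]\bigr)(\cdot,0,\cdot)$ equals $\frac{2\lambda_k^2}{3(X_k+1)(X_k+2)}\hat\Phi(k)$. Since $(X_k+1)(X_k+2)\ge 6$ uniformly in $k$ and $\e$, Parseval gives
\begin{eqnarray*}
\Bigl\|\bigl(D_{\eta\eta}R(0,L_\e)[\Phi_{\eta\eta}({\bf T}'-{\bf U})]\bigr)(\tau,0,\cdot)\Bigr\|_2\le C\|\Phi_{\eta\eta\eta\eta}(\tau,\cdot)\|_2,
\end{eqnarray*}
uniformly in $\e\in(0,1/2]$. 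For the fourth piece I would use formula \eqref{u1k}, so that the $k$-th coefficient of $\bigl(D_{\eta\eta}R(0,L_\e)[\Phi_{\tau\eta\eta}({\bf V}-{\bf T}-\frac{4}{3}{\bf U})]\bigr)(\cdot,0,\cdot)$ is $-\frac{4\lambda_k^2(4X_k+7)}{9(X_k+1)^2(X_k+2)}\widehat{\Phi_\tau}(k)$. The factor $\e$ in front of this term is crucial: using $\e\lambda_k=(X_k^2-1)/4=(X_k-1)(X_k+1)/4$, one gets
\begin{eqnarray*}
3\e\cdot\frac{4\lambda_k^2(4X_k+7)}{9(X_k+1)^2(X_k+2)}=\lambda_k\cdot\frac{(X_k-1)(4X_k+7)}{3(X_k+1)(X_k+2)},
\end{eqnarray*}
and the second factor is manifestly bounded on $[1,+\infty)$ (it tends to $4/3$ as $X_k\to+\infty$). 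Hence the $L^2$ norm of this last piece is controlled by $C\|\Phi_{\tau\eta\eta}(\tau,\cdot)\|_2$ uniformly in $\e$.

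The hard part — and really the only non-routine point — is precisely this cancellation in the last piece: without the prefactor $\e$ the multiplier $\lambda_k^2/(X_k+1)^2(X_k+2)$ behaves like $\e^{-1/2}\lambda_k^{3/2}$, which does \emph{not} give a uniform bound by $\lambda_k|\widehat{\Phi_\tau}(k)|$; the identity $\e\lambda_k=(X_k^2-1)/4$ is what restores the uniformity. Once all four contributions to $\|\mathscr H_\e(\Phi)\|_2$ are combined, a direct application of the Cauchy--Schwarz inequality to $\mathscr I_4=\int\mathscr H_\e(\Phi)\,\rho_\tau\,d\eta$ yields \eqref{stima-I4} with a constant $C$ independent of $\e\in(0,1/2]$ and of $\tau\in[0,T]$.
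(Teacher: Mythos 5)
Your proof is correct and follows essentially the same route as the paper: reduce to an $L^2$ bound on $\mathscr H_\e(\Phi)$ via Cauchy--Schwarz, handle the two local terms trivially, and estimate the two resolvent terms through the Fourier multipliers \eqref{u2k} and \eqref{u1k}, using the identity $\e\lambda_k=(X_k^2-1)/4$ to absorb the $\e$ prefactor (the paper uses the slightly looser $4\e\lambda_k\le X_k^2$, but the mechanism is identical).
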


\begin{proof}
Of course, we just need to estimate the terms
\begin{align*}
J_4^{(1)}(\tau)&=\e\int_{-\frac{\ell_0}{2}}^{\frac{\ell_0}{2}}
\rho_{\tau}\left (D_{\eta\eta}R(0,L_{\varepsilon}) \left [\Phi_{\tau\eta\eta}\left
({\bf V}-{\bf T}-\frac{4}{3}{\bf U}\right )\right ]\right
)(\cdot,0,\cdot)d\eta,\\[2mm]
J_4^{(2)}(\tau)&=\int_{-\frac{\ell_0}{2}}^{\frac{\ell_0}{2}}
\rho_{\tau}\left(D_{\eta\eta}R(0,L_{\varepsilon})[ \Phi_{\eta\eta}({\bf
T}'-{\bf U})]\right)(\cdot,0,\cdot),
\end{align*}
the other remaining terms are easily to be handled with.

Concerning $J_4^{(1)}$, taking \eqref{u1k} into account we can
estimate
\begin{align*}
|J_4^{(1)}(\tau)|&\le \frac{4}{9}\e\sum_{k=0}^{+\infty}
\lambda_k^2\frac{4X_k+7}{(X_k+1)^2(X_k+2)}|\hat\rho_{\tau}(\tau,k)|
|\hat\Phi_{\tau}(\tau,k)|\\
&\le \frac{1}{9}\sum_{k=0}^{+\infty}
\lambda_k\frac{X_k^2(4X_k+7)}{(X_k+1)^2(X_k+2)}|\hat\rho_{\tau}(\tau,k)|
|\hat\Phi_{\tau}(\tau,k)|\\
&\le \frac{4}{9}\sum_{k=0}^{+\infty}
\lambda_k|\hat\rho_{\tau}(\tau,k)|
|\hat\Phi_{\tau}(\tau,k)|\\
&=\frac{4}{9}\|\rho_{\tau}(\tau,\cdot)\|_2\|\Phi_{\tau\eta\eta}(\tau,\cdot)\|_2.
\end{align*}
for any $\tau\in [0,T]$. Similarly, using \eqref{u2k} we can estimate
\begin{align*}
|J_4^{(2)}(\tau)|&\le \frac{2}{3}\sum_{k=0}^{+\infty}
\lambda_k^2\frac{1}{(X_k+1)(X_k+2)}|\hat\rho_{\tau}(\tau,k)|
|\hat\Phi(\tau,k)|\\
&\le \frac{1}{3}\sum_{k=0}^{+\infty}
\lambda_k^2|\hat\rho_{\tau}(\tau,k)|
|\hat\Phi(\tau,k)|\\
&= \frac{1}{3}\|\rho_{\tau}(\tau,\cdot)\|_2
|\Phi_{\eta\eta\eta\eta}(\tau,\cdot)\|_2,
\end{align*}
for any $\tau\in [0,T]$. Now estimate \eqref{stima-I4} follows immediately.
\end{proof}

Finally, we recall the following result proved in \cite{BFHLS} which
plays a crucial role in the proof of Theorem
\ref{cor-apriori-estim}.

\begin{lemma}[slight extension of Lemma 3.1 of \cite{BFHLS}]
\label{lem-5} Let $A_0$, $C_0$, $C_1$, $C_2$ be positive constants.
For any $T>0$, there exist $\e_0\in (0,1/2)$ and
a constant $K_0$ such that, if $A_{\e}\in C^1([0,T'])$ $(T'\in (0,T])$
satisfies
\begin{align*}
\left\{
\begin{array}{ll}
A_{\e}'(\tau)\le C_0+C_1A_{\e}(\tau)+C_2\e (A_{\e}(\tau))^2, & \tau\in [0,T'],\\[1mm]
A_{\e}(0)\le A_0,
\end{array}
\right.
\end{align*}
for some $\e\in (0,\e_0]$, then $A_{\e}(\tau)\le K_0$ for any
$\tau\in [0,T_0]$.
\end{lemma}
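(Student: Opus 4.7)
The plan is a standard comparison/bootstrap argument for the scalar differential inequality, extending the one-sided Gronwall estimate by absorbing the quadratic perturbation when $\e$ is small.

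First I would introduce the linear majorant. Let $B:[0,T]\to\R$ be the unique solution to the linear Cauchy problem $B'(\tau)=C_0+(C_1+1)B(\tau)$ with $B(0)=A_0$. Solving explicitly,
\begin{equation*}
B(\tau)=\left(A_0+\frac{C_0}{C_1+1}\right)e^{(C_1+1)\tau}-\frac{C_0}{C_1+1},
\end{equation*}
so $B$ is non-decreasing and $M:=B(T)$ depends only on $A_0,C_0,C_1,T$. Set $K_0:=M+1$ and then choose $\e_0\in(0,1/2)$ so small that $C_2\,\e_0\,K_0\le 1$. Both $K_0$ and $\e_0$ depend only on $A_0,C_0,C_1,C_2$ and $T$, as required.

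Next I would run the continuity (bootstrap) argument on the maximal subinterval where $A_\e$ stays below the threshold $K_0$. Fix $\e\in(0,\e_0]$ and $T'\in(0,T]$, and define
\begin{equation*}
\tau^\ast:=\sup\{\tau\in[0,T']:\,A_\e(s)\le K_0\text{ for all }s\in[0,\tau]\}.
\end{equation*}
Because $A_\e(0)\le A_0<K_0$ and $A_\e$ is continuous, $\tau^\ast>0$. On $[0,\tau^\ast]$ the bound $A_\e\le K_0$ and the choice of $\e_0$ give
\begin{equation*}
C_2\,\e\,(A_\e(\tau))^2\le C_2\,\e\,K_0\,A_\e(\tau)\le A_\e(\tau),
\end{equation*}
so the hypothesis of the lemma yields $A_\e'(\tau)\le C_0+(C_1+1)A_\e(\tau)$. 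Since $A_\e(0)\le A_0=B(0)$, a standard scalar comparison (integrating $(A_\e-B)'\le(C_1+1)(A_\e-B)$ against the integrating factor $e^{-(C_1+1)\tau}$) gives $A_\e(\tau)\le B(\tau)\le M<K_0$ for every $\tau\in[0,\tau^\ast]$.

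I would then conclude by showing $\tau^\ast=T'$. If instead $\tau^\ast<T'$, continuity of $A_\e$ and the definition of $\tau^\ast$ force $A_\e(\tau^\ast)=K_0$, contradicting the strict inequality $A_\e(\tau^\ast)\le M<K_0$ just established. Hence $A_\e(\tau)\le K_0$ throughout $[0,T']$, which is the desired uniform bound. The only real content is the quantitative choice of $\e_0$ that guarantees the quadratic term is dominated by the linear one on the whole of $[0,T]$; there is no genuine obstacle, but it is essential that $K_0$ be chosen \emph{before} $\e_0$, so that the threshold is independent of $\e$ and the bootstrap closes.
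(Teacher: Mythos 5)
The paper does not actually prove Lemma \ref{lem-5}; it only cites Lemma 3.1 of \cite{BFHLS}, so there is no in-text proof to compare against. Your argument is the standard one for such Riccati-type differential inequalities and it is essentially correct: choose the linear majorant $B$ first so that $K_0=B(T)+1$ is $\e$-independent, pick $\e_0$ so that the quadratic term is absorbed into the linear one on $\{A_\e\le K_0\}$, then close via a continuity/bootstrap argument. The key logical point you emphasize — fixing $K_0$ before $\e_0$ — is exactly what makes the bootstrap non-circular, and you handle it correctly.

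One small caveat you should make explicit: the step
\begin{equation*}
C_2\,\e\,(A_\e(\tau))^2\le C_2\,\e\,K_0\,A_\e(\tau)
\end{equation*}
uses not only $A_\e(\tau)\le K_0$ but also $A_\e(\tau)\ge 0$ (the inequality $A^2\le K_0A$ is equivalent to $0\le A\le K_0$). Nonnegativity is not stated in the lemma's hypotheses as written, but it is automatic in the application — there $A_\e(\tau)=\|\rho_{\eta\eta}(\tau,\cdot)\|_2^2$ — and is clearly implicit in the source result in \cite{BFHLS}. Either add ``$A_\e\ge 0$'' to the hypotheses you work with, or note that if $A_\e(\tau)<0$ the conclusion $A_\e(\tau)\le K_0$ is trivial and confine the comparison argument to the (relatively open) set where $A_\e>0$. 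With that understood, the proof is complete and $K_0,\e_0$ depend only on $A_0,C_0,C_1,C_2,T$, as claimed.
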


\begin{proof}[Proof of Theorem $\ref{cor-apriori-estim}$]
To begin with, we observe that, taking Poincar\'e-Wirtinger
inequality into account, we can estimate
\begin{align*}
\left
|\int_{-\frac{\ell_0}{2}}^{\frac{\ell_0}{2}}(\rho_\eta(\tau,\cdot))^2\rho_{\tau}(\tau,\cdot)d\eta
\right |&\le
\|\rho_{\tau}(\tau,\cdot)\|_2\|\rho_{\eta}(\tau,\cdot)\|_2\|\rho_{\eta}(\tau,\cdot)\|_{\infty}\notag\\
&\le
\sqrt{\ell_0}\|\rho_{\tau}(\tau,\cdot)\|_2\|\rho_{\eta\eta}(\tau,\cdot)\|_2^2,
\end{align*}
for any $\tau\in [0,T]$. Hence, from Lemmata \ref{lem-1}-\ref{lem-4}
and estimate \eqref{stima-import}, and using H\"older inequality, we
get
\begin{align}
&\frac{3}{2}(1-\e)\frac{d}{d\tau}\|\rho_{\eta\eta}(\tau,\cdot)\|_2^2
+\|\rho_{\tau}(\tau,\cdot)\|_2^2+3\e\|\rho_{\tau\eta}(\tau,\cdot)\|_2^2\notag\\
 \le& C\Big (
\|\rho_{\tau}(\tau,\cdot)\|_2+\|\rho_{\tau}(\tau,\cdot)\|_2\|\rho_{\eta\eta}(\tau,\cdot)\|_2
+\e\|\rho_{\tau}(\tau,\cdot)\|_2\|\rho_{\eta\eta}(\tau,\cdot)\|_2^2\notag\\
&\quad\;\,+\e\|\rho_{\tau\eta}(\tau,\cdot)\|_2\|\rho_{\eta\eta}(\tau,\cdot)\|_2
+\e^2\|\rho_{\tau\eta}(\tau,\cdot)\|_2\|\rho_{\eta\eta}(\tau,\cdot)\|_2^2\Big
),
\label{stima-imp-1}
\end{align}
for some positive constant $C$, depending on $\Phi$ but being
independent of $\tau\in [0,T]$. Using Young inequality
$ab\le\frac{1}{4}a^2+b^2$, we can estimate
\begin{align*}
&C\|\rho_{\tau}(\tau,\cdot)\|_2\le\frac{1}{4}\|\rho_{\tau}(\tau,\cdot)\|_2^2+C^2,\\[1mm]
&C\|\rho_{\tau}(\tau,\cdot)\|_2\|\rho_{\eta\eta}(\tau,\cdot)\|_2\le
\frac{1}{4}\|\rho_{\tau}(\tau,\cdot)\|_2^2+C^2\|\rho_{\eta\eta}(\tau,\cdot)\|_2^2,\\[1mm]
&C\e\|\rho_{\tau}(\tau,\cdot)\|_2\|\rho_{\eta\eta}(\tau,\cdot)\|_2^2
\le\frac{1}{4}\|\rho_{\tau}(\tau,\cdot)\|^2_2+C^2\e^2\|\rho_{\eta\eta}(\tau,\cdot)\|_2^4,\\[1mm]
&C\e\|\rho_{\tau\eta}(\tau,\cdot)\|_2\|\rho_{\eta\eta}(\tau,\cdot)\|_2
\le\frac{1}{4}\e\|\rho_{\tau\eta}(\tau,\cdot)\|_2^2
+\e C^2\|\rho_{\eta\eta}(\tau,\cdot)\|_2^2,\\[1mm]
&C\e^2\|\rho_{\tau\eta}(\tau,\cdot)\|_2\|\rho_{\eta\eta}(\tau,\cdot)\|_2
\le\frac{1}{4}\e^2\|\rho_{\tau\eta}(\tau,\cdot)\|_2^2
+C^2\e^2\|\rho_{\eta\eta}(\tau,\cdot)\|_2^2.
\end{align*}
Hence, from \eqref{stima-imp-1} we get
\begin{align}
&\frac{3}{2}(1-\e)\frac{d}{d\tau}\|\rho_{\eta\eta}(\tau,\cdot)\|_2^2
+\|\rho_{\tau}(\tau,\cdot)\|_2^2+3\e\|\rho_{\tau\eta}(\tau,\cdot)\|_2^2\notag\\[1mm]
 \le&
C^2+\frac{3}{4}\|\rho_{\tau}(\tau,\cdot)\|_2^2+\frac{1}{4}(\e+\e^2)\|\rho_{\tau\eta}(\tau,\cdot)\|_2^2
\notag\\[1mm]
&+C^2(1+\e+\e^2)\|\rho_{\eta\eta}(\tau,\cdot)\|_2^2+C^2\e^2\|\rho_{\eta\eta}(\tau,\cdot)\|_2^4,
\label{giro-0}
\end{align}
or, equivalently,
\begin{align}
\frac{d}{d\tau}\|\rho_{\eta\eta}(\tau,\cdot)\|_2^2&\le
\frac{4}{3}C^2
+4C^2\|\rho_{\eta\eta}(\tau,\cdot)\|_2^2+\frac{4}{3}C^2\e^2\|\rho_{\eta\eta}(\tau,\cdot)\|_2^4\notag\\[1mm]
&\le \frac{4}{3}C^2
+4C^2\|\rho_{\eta\eta}(\tau,\cdot)\|_2^2+\frac{2}{3}C^2\e\|\rho_{\eta\eta}(\tau,\cdot)\|_2^4,
\label{giro}
\end{align}
provided that $\e\le 1/2$.

Applying Lemma \ref{lem-5} to \eqref{giro} with
$A_{\e}(\tau)=\|\rho_{\eta\eta}(\tau,\cdot)\|_2^2$ and
$(C_0,C_1,C_2)=(4C^2/3,4C^2,2C^2/3)$, we immediately deduce that
there exist $\e_0\in (0,1/2)$ and $K_0>0$ such that
\begin{eqnarray*}
\sup_{\tau\in (0,T]}
\int_{-\frac{\ell_0}{2}}^{\frac{\ell_0}{2}}(\rho_{\eta\eta}(\tau,\eta))^2d\eta
\le K_0,
\end{eqnarray*}
for any $\e\in (0,\e_0)$. Now, using a Poincar\'e-Wirtinger
inequality, we get
\begin{eqnarray*}
\sup_{{\tau\in (0,T]}\atop{\eta\in [-\ell_0/2,\ell_0/2]}}
|\rho_{\eta}(\tau,\eta)| \le K_1,
\end{eqnarray*}
for any $\e\in (0,\e_0]$, with $K_1$ independent of $\e$.

Finally, integrating \eqref{giro-0} and using the estimates so far
obtained, we deduce that
\begin{eqnarray*}
\int_0^{T_0}\int_{-\frac{\ell_0}{2}}^{\frac{\ell_0}{2}}(\rho_{\tau}(\tau,\eta))^2d\tau
d\eta
+\int_0^{T_0}\int_{-\frac{\ell_0}{2}}^{\frac{\ell_0}{2}}(\rho_{\tau\eta}(\tau,\eta))^2d\tau
d\eta \le K_2,
\end{eqnarray*}
for some constant $K_2$, independent of $\e$. The assertion now
follows.
\end{proof}

\begin{cor}
\label{final} Under the assumptions of Theorem
$\ref{cor-apriori-estim}$, there exists a constant $M>0$ such that
\begin{eqnarray*}
\|\rho\|_{C^{0,1}([0,T]\times [-\ell_0/2,\ell_0/2])} \leq M,
\end{eqnarray*}
for any $\e\in (0,\e_0]$.
\end{cor}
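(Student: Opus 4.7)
The proof will be a direct harvest of the bounds furnished by Theorem \ref{cor-apriori-estim}. Recall from \eqref{spazi-h-k} that
\begin{eqnarray*}
\|\rho\|_{C^{0,1}([0,T]\times[-\ell_0/2,\ell_0/2])}
=\sup_{\tau\in[0,T]}\|\rho(\tau,\cdot)\|_{C^{1}([-\ell_0/2,\ell_0/2])}
+\sup_{\eta\in[-\ell_0/2,\ell_0/2]}\|\rho(\cdot,\eta)\|_{C([0,T])},
\end{eqnarray*}
so the plan splits naturally into two pieces: a uniform bound on $\rho_{\eta}$ and $\rho$ pointwise in $\tau$, and a uniform bound on $\rho$ pointwise in $\eta$.

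First, the estimate on $\rho_{\eta}$ is immediate: Theorem \ref{cor-apriori-estim} already gives $\sup_{(\tau,\eta)}|\rho_{\eta}(\tau,\eta)|\le K(T)$ uniformly in $\e\in(0,\e_0]$, so this contributes to $\sup_{\tau}\|\rho(\tau,\cdot)\|_{C^{1}}$ only through the $C^0$ norm of $\rho$ itself. For that, I would use the initial condition $\rho(0,\cdot)\equiv 0$ (chosen in Section \ref{sect:4-3}) together with the Fundamental Theorem of Calculus to write
\begin{eqnarray*}
\rho(\tau,\eta)=\int_0^{\tau}\rho_{\tau}(s,\eta)\,ds,\qquad (\tau,\eta)\in[0,T]\times[-\ell_0/2,\ell_0/2],
\end{eqnarray*}
which is legitimate since $\rho\in \mathscr{Y}_T$.

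The second, and only slightly delicate, step is to convert the $L^{2}$-in-space time-integrals from Theorem \ref{cor-apriori-estim} into a pointwise-in-space bound. Since we are in one spatial dimension on a bounded interval, the Sobolev embedding $H^{1}\hookrightarrow C([-\ell_0/2,\ell_0/2])$ yields a constant $C_0>0$ (depending only on $\ell_0$) with
\begin{eqnarray*}
\|\rho_{\tau}(s,\cdot)\|_{\infty}\le C_0\bigl(\|\rho_{\tau}(s,\cdot)\|_{2}+\|\rho_{\tau\eta}(s,\cdot)\|_{2}\bigr),
\qquad s\in[0,T].
\end{eqnarray*}
Combining with Cauchy--Schwarz in $s$ and the time-integral bounds of Theorem \ref{cor-apriori-estim} gives
\begin{eqnarray*}
\sup_{(\tau,\eta)}|\rho(\tau,\eta)|
\le C_0\sqrt{T}\Bigl(\int_0^T\!\!\bigl(\|\rho_{\tau}(s,\cdot)\|_{2}+\|\rho_{\tau\eta}(s,\cdot)\|_{2}\bigr)^2\,ds\Bigr)^{1/2}
\le C_0\sqrt{2TK(T)},
\end{eqnarray*}
a bound independent of $\e\in(0,\e_0]$. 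This simultaneously controls $\sup_{\tau}\|\rho(\tau,\cdot)\|_{\infty}$ and $\sup_{\eta}\|\rho(\cdot,\eta)\|_{C([0,T])}$, since both are taken over the same compact $[0,T]\times[-\ell_0/2,\ell_0/2]$.

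There is no real obstacle here; the work was already done in Theorem \ref{cor-apriori-estim}. The only subtlety is the $\e$-uniformity, which is guaranteed by the fact that both the constant $K(T)$ and the Sobolev embedding constant $C_0$ are independent of $\e$. Summing the two bounds then yields the constant $M=K(T)+C_0\sqrt{2TK(T)}$, completing the proof.
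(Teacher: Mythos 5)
Your proof is correct, and it takes a slightly different path from the paper's, so a brief comparison is worthwhile. Both arguments start from $\rho(0,\cdot)=0$ and $|\rho(\tau,\eta)|\le\int_0^T|\rho_\tau(s,\eta)|\,ds$, and both ultimately invoke a one-dimensional Sobolev embedding in the $\eta$ variable. Where you differ is in which a priori bounds from Theorem \ref{cor-apriori-estim} you consume and how: you apply $H^1\hookrightarrow C$ to $\rho_\tau(s,\cdot)$ at each fixed time $s$, which requires both time-integral bounds $\int_0^T\|\rho_\tau\|_2^2\,ds$ and $\int_0^T\|\rho_{\tau\eta}\|_2^2\,ds$, and then integrate in $s$ via Cauchy--Schwarz. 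The paper instead first establishes a bound on $\rho$ in $L^1\big((-\ell_0/2,\ell_0/2);L^\infty(0,T)\big)$ using only the $\int_0^T\|\rho_\tau\|_2^2\,ds$ bound, then upgrades to $W^{1,1}_\eta\big(L^\infty_\tau\big)$ via the pointwise bound on $\rho_\eta$, and finishes with $W^{1,1}\hookrightarrow L^\infty$ in $\eta$. Your version is arguably a touch more direct, at the cost of invoking the $\rho_{\tau\eta}$ estimate (which the paper leaves unused in this corollary); the paper's version trades that for the pointwise $\rho_\eta$ bound, which it needs anyway for the $C^1$ part of the norm. Either way the $\e$-uniformity flows from the $\e$-independence of $K(T)$ and of the Sobolev constant, exactly as you say.
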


\begin{proof}
In view of Theorem \ref{cor-apriori-estim}, we have only to estimate
the sup-norm of the function $\rho$. Since $\rho\in {\mathscr Y}_T$
and $\rho(0,\cdot)=0$, we have
\begin{eqnarray*}
\sup_{\tau\in
[0,T]}|\rho(\tau,\eta)|\le\int_0^T|\rho_{\tau}(\sigma,\eta)|d\sigma,\qquad\;\,\eta\in
[-\textstyle{\frac{\ell_0}{2}},\frac{\ell_0}{2}].
\end{eqnarray*}

Integrating both the sides of the previous inequality with respect
to $\eta\in [-\ell_0/2,\ell_0/2]$, we get
\begin{align*}
\int_{-\frac{\ell_0}{2}}^{-\frac{\ell_0}{2}}\sup_{\tau\in
[0,T]}|\rho(\tau,\eta)|d\eta \le&
\int_{-\frac{\ell_0}{2}}^{-\frac{\ell_0}{2}}d\eta\int_0^{T}|\rho_{\tau}(\sigma,\eta)|d\sigma\notag\\
\le&
\sqrt{\ell_0T}\int_0^{T}d\sigma\int_{-\frac{\ell_0}{2}}^{-\frac{\ell_0}{2}}|\rho_{\tau}(\sigma,\eta)|^2d\eta\le
K\sqrt{\ell_0T},
\end{align*}
where $K$ is the constant in Theorem \ref{cor-apriori-estim}.
Therefore, the function $\rho$ remains in a bounded
subset of the space $L^1((-\ell_0/2,\ell_0/2);L^{\infty}(0,T))$.
Thanks to the uniform estimate on $\rho_{\eta}$ on $[0,T] \times
[-\ell_0/2,\ell_0/2]$,  we infer that $\rho$ is bounded
in $W^{1,1}((-\ell_0/2,\ell_0/2);L^{\infty}(0,T))$. Hence, by the
Sobolev embedding, $\rho$ is bounded in
$[0,T]\times [-\ell_0/2,\ell_0/2]$. This accomplishes the proof.
\end{proof}

\subsection{Solving Equation \eqref{eqn:rho} in $[0,T]$}
\label{subsect-6.2} We now consider a fixed time interval $[0,T]$
and $0<\e \leq \e_0$, with $\e_0=\e_0(T)$ given by Theorem
\ref{cor-apriori-estim}. Thanks to the a priori estimates of
Subsection \ref{subsect-apriori} and a classical result for
semilinear problems, we can show that, for any $\e\in (0,\e_0]$, the
solution $\rho=\rho_{\e}$ to Problem \eqref{eqn:rho}, given by
Theorem \ref{thm-semilinear}, can be extended with a function
$\rho\in {\mathscr Y}_{T}$ (see Definition \ref{def-YT}), which
solves the equation in the whole of $[0,T]$.

\begin{theorem}
Fix $T>0$ and let $\e_0=\e_0(T_0)$ be as in Theorem
$\ref{cor-apriori-estim}$. Then, for any $\e\in (0,\e_0]$, Equation
\eqref{eqn:rho} admits a unique solution $\rho\in {\mathscr Y}_T$.
\end{theorem}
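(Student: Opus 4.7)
The plan is a standard continuation argument: combine the local-existence result of Theorem \ref{thm-semilinear} with the uniform a priori bounds of Theorem \ref{cor-apriori-estim} and Corollary \ref{final} to rule out finite-time blow-up before $T$, and then transfer the regularity of Subsection \ref{subsect-proof} to the whole interval. Fix $\e\in(0,\e_0]$ and let $T_{\e}\in(0,+\infty]$ be the maximal existence time of the solution $\rho=\rho_{\e}$ produced by Theorem \ref{thm-semilinear}. Invoking the bootstrap of Subsection \ref{subsect-proof} on every $[0,T']\Subset[0,T_{\e})$, we see that $\rho\in\mathscr{Y}_{T'}$; in particular, $\rho$ actually solves Equation \eqref{eqn:rho} on $[0,T_{\e})$. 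I shall argue by contradiction and assume that $T_{\e}<T$.

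Under the contradiction hypothesis, Theorem \ref{cor-apriori-estim} and Corollary \ref{final} apply on every subinterval $[0,T']\subset[0,T_{\e})$, yielding constants $K=K(T)$ and $M=M(T)$, \emph{independent of $T'$}, such that
\begin{equation*}
\|\rho\|_{C^{0,1}([0,T']\times[-\ell_0/2,\ell_0/2])}\le M,\qquad \sup_{\tau\in(0,T']}\|\rho_{\eta\eta}(\tau,\cdot)\|_{2}\le K.
\end{equation*}
Interpolating the uniform $L^{\infty}$-bound on $\rho_{\eta}(\tau,\cdot)$ with the uniform $L^{2}$-bound on $\rho_{\eta\eta}(\tau,\cdot)$ (via the embedding $H^{2}\hookrightarrow C^{3/2-\delta}_{\sharp}$ combined with standard interpolation inequalities for Hölder spaces), we obtain a constant $K'=K'(T)$ such that
\begin{equation*}
\sup_{\tau\in[0,T_{\e})}\|\rho(\tau,\cdot)\|_{C^{2\theta}_{\sharp}}\le K'\qquad\text{for every }\theta\in(0,3/4).
\end{equation*}

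Now I would appeal to the blow-up alternative for the semilinear equation \eqref{eq-2nd-order}: since $R_{\e}$ is sectorial with $D_{R_{\e}}(\theta,\infty)=C_{\sharp}^{2\theta}$ (Proposition \ref{prop-5.4}) and the nonlinear term $\mathscr{K}_{\e}$ is locally Lipschitz from $[0,T]\times C_{\sharp}$ into $C_{\sharp}$ (Proposition \ref{prop-H}), the standard result \cite[Prop.~7.1.10]{lunardi} asserts that either $T_{\e}\ge T$ or $\limsup_{\tau\to T_{\e}^{-}}\|\rho(\tau,\cdot)\|_{C^{2\theta}_{\sharp}}=+\infty$ for the (sub)critical $\theta$ used in the fixed-point construction. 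The uniform bound just displayed excludes the second alternative, hence $T_{\e}\ge T$. Setting $T'=T$ in the bootstrap of Subsection \ref{subsect-proof} then yields $\rho\in\mathscr{Y}_{T}$, and uniqueness in $\mathscr{Y}_{T}$ follows from the uniqueness statement of Theorem \ref{thm-semilinear} for the equivalent semilinear formulation.

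The only delicate point is the compatibility between the a priori estimates (obtained in an $L^{2}/H^{2}$ energy framework, plus a sup-norm control on $\rho_{\eta}$) and the Hölder interpolation spaces $D_{R_{\e}}(\theta,\infty)=C^{2\theta}_{\sharp}$ demanded by the abstract blow-up alternative. The argument rests on the fact that a uniform $H^{2}\cap C^{1}$ bound automatically upgrades, by Sobolev embedding and interpolation, to a uniform $C^{2\theta}_{\sharp}$ bound for every $\theta<3/4$, which is more than sufficient to invoke the continuation criterion. Once this is in place, the proof is essentially a bookkeeping exercise.
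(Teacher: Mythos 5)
Your proposal is correct and follows essentially the same route as the paper: a continuation argument by contradiction, feeding the uniform a priori bounds of Theorem \ref{cor-apriori-estim} and Corollary \ref{final} into an abstract non-explosion criterion from Lunardi, and then invoking the bootstrap of Subsection \ref{subsect-proof} to recover the full $\mathscr{Y}_T$-regularity. The only difference is in the bookkeeping of which criterion is applied: the paper uses directly the $L^\infty$ bound on $\rho_\eta$ to conclude that $\tau\mapsto\mathscr{K}_{\e}(\tau,\rho_\eta(\tau,\cdot))$ stays bounded in $C_\sharp$ and then invokes Lunardi's non-explosion result (Prop.~7.1.8), whereas you interpolate the $L^\infty$ and $H^2$ bounds to obtain a uniform $C^{2\theta}_\sharp$ bound for every $\theta<3/4$ and then appeal to the blow-up alternative in the intermediate space (Prop.~7.1.10). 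Both are valid; the paper's version skips the interpolation step, while yours is arguably slightly more self-contained in verifying the hypothesis of the abstract criterion against the $\alpha\in(1/2,3/4)$ used in the fixed-point construction.
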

\begin{proof}
Let us fix $T$ as in the statement of the theorem and let $\e\in
(0,\e_0)$. Suppose by contradiction that $T_{\e}<T$. Then, by
Theorem \ref{cor-apriori-estim},
\begin{eqnarray*}
\sup_{\tau\in
[0,T_{\e})}\|\rho_{\eta}(\tau,\cdot)\|_{C^1([-\ell_0/2,\ell_0/2])}<K,
\end{eqnarray*}
for some positive constant $K$, independent of $\e$. Hence, the
function ${\mathscr H}_{\e}(\cdot,\rho_{\eta}^2)$ is bounded in
$[0,T_{\e})\times [-\ell_0/2,\ell_0/2]$. In view of \cite[Prop.
7.1.8]{lunardi}, applied to Equation \eqref{eqn:rho}, this leads us
to a contradiction.
\end{proof}
\subsection{Proof of Main Theorem}
\label{subsect-6.3} We are now in a position to prove the main
result of this paper. Let us fix a function $\Phi_0\in
C_{\sharp}^{4+4\beta}$ for some $\beta\in (1/2,1)$.

From the results in Subsection \ref{subsect-6.2}, we know that, for
any $T>0$, there exists $\e_0=\e_0(T)$ such that Equation
\eqref{abstract} admits a unique solution $\psi_{\e}\in {\mathscr
Y}_T$ (see Definition \ref{def-YT}) such that
$\rho(0,\cdot)=\Phi_0$. Moreover, by Corollary \ref{final},
\begin{eqnarray*}
\|\psi_{\e}(\tau,\cdot)-\Phi_0(\tau,\cdot)\|_{C([-\ell_0/2,\ell_0/2])}\le \varepsilon M,\qquad\;\,\tau\in [0,T],
\end{eqnarray*}
for some positive constant $M$ and any $\e\in (0,\e_0]$.

In view of Theorem \ref{thm-equivalence}, there exists a (unique)
function $v\in {\mathscr V}_T$ such that the pair $(v,\psi)$ is the
unique solution to Problem \eqref{eqn:v}-\eqref{cond-v-2}.

Coming back to Problem \eqref{eqn:u}-\eqref{cond-u-2} and setting
$\ell_{\e}=\ell_0/\sqrt{\e}$ and $T_{\e}=T/\e^{2}$, it is now
immediate to conclude that, for any $\e\in (0,\e_0]$, it admits a
unique solution $(u,\varphi)\in {\mathscr V}_{T_{\e}}\times
{\mathscr Y}_{T_{\e}}$. Moreover,
\begin{eqnarray*}
\|\varphi_{\e}(t,\cdot)-\e\Phi(t\e^2,\sqrt{\e}\,\cdot)\|_{C([-\ell_{\e}/2,\ell_{\e}/2])}\le
\varepsilon^2 M,\qquad\;\,t\in [0,T_{\e}].
\end{eqnarray*}
This accomplishes the proof of Main Theorem.

\section*{Acknowledgments}
J.H. and L.L. were visiting professors at the University of Bordeaux 1
respectively in 2006-2007 and 2007-2009. They greatly acknowledge
the Institute of Mathematics of Bordeaux for the
warm hospitality during their visits.

\appendix

\section{Some results from \cite{BHL08}}
\setcounter{equation}{0}
In this appendix we recall some results
from \cite{BHL08} that are used throughout this paper.

\subsection{The operator $L$}
\label{Lstuff} Let ${\mathscr L}$ be the differential operator
defined on smooth (generalized) functions ${\bf u}$ by
\begin{eqnarray*}
({\mathscr L}{\bf u})(x,\eta)= \left\{
\begin{array}{lll}
D_{xx}u_1(x,\eta)-D_xu_1(x,\eta)+e^xu_1(0,\eta), &x\le 0, &|\eta|\le \frac{\ell_0}{2},\\[2mm]
D_{xx}u_2(x,\eta)-D_xu_2(x,\eta), &x\ge 0, &|\eta|\le
\frac{\ell_0}{2},
\end{array}
\right.
\end{eqnarray*}
and let $L$ be its the realization in ${\mathscr X}$, defined by
\begin{eqnarray*}
\left\{
\begin{array}{l}
D({L})= \Big\{{\bf u}\in C^{2,0}(I_-)\times C^{2,0}(I_+):
{\bf u},~{\mathscr L}{\bf u}\in {\mathscr X}\\[1mm]
\qquad\qquad\quad D_x^{(j)}u_1(0,\cdot)=D_x^{(j)}u_2(0,\cdot),~j=0,1\Big\},\\[3mm]
{L}{\bf u}= \left\{
\begin{array}{ll}
D_{xx}u_1-D_xu_1 +u_1(0,\cdot)e^x, & (x,\eta)\in I_-,\\[1mm]
D_{xx}u_2-D_xu_2, & (x,\eta)\in I_+.
\end{array}
\right.
\end{array}
\right.
\end{eqnarray*}

\begin{theorem}
\label{thm:1} The following properties are met:
\begin{enumerate}[\rm (i)]
\item
the operator ${L}$ is sectorial and, hence, it generates an analytic
semigroup in ${\mathscr X}$;
\item
the spectrum of the operator ${L}$ consists of $0$ and the halfline
$(-\infty,-1/4]$;
\item
the spectral projection on the kernel of ${L}$ is the operator
${\mathscr P}$ defined by
\begin{eqnarray*}
\qquad\;\;\;\;{\mathscr P}({\bf f})=\left
(\int_{-\infty}^0f_1(x,\cdot)dx+\int_0^{+\infty}e^{-x}f_2(x,\cdot)dx\right
) {\bf U}:= Q({\bf f}){\bf U},\qquad\;\,{\bf f}\in {\mathscr X};
\end{eqnarray*}
\item
let ${\bf f}\in {\mathscr X}$. Then, the equation ${L}{\bf u}={\bf
f}$ has a solution ${\bf u}\in D({L})$ if and only if ${\mathscr
P}({\bf f})=0$;
\item
for any $\lambda\notin (-\infty,-1/4]\cup\{0\}$ and any ${\bf
f}=(f_1,f_2)\in {\mathscr X}$, setting ${\bf u}:=R(\lambda,L){\bf
f}$ it holds that
\begin{align}
\qquad u_1(0,\eta)=u_2(0,\eta)=& g(\lambda)\left
(\int_{-\infty}^0e^{-\nu_1t}f_1(t,\eta)dt
+\int_0^{+\infty}e^{-\nu_2t}f_2(t,\eta)dt\right ), \label{3.1-b}
\end{align}
for any $\eta\in [-\ell_0/2,\ell_0/2]$, where
\begin{align*}
g(\lambda)&=\left (\frac{2\lambda
}{1+(2\lambda-1)X(\lambda)}\frac{1}{\nu_2}+1\right
)\frac{1}{X(\lambda)},\qquad\;\,X(\lambda)=\sqrt{1+4\lambda}.
\end{align*}
\end{enumerate}
\end{theorem}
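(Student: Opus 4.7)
My first move is to exploit the fact that ${\mathscr L}$ contains no derivatives in $\eta$, so that $\eta$ enters only as a parameter. Accordingly, I would introduce the one-dimensional operator $L_0$ on the space ${\mathscr X}_0=\{(u_1,u_2)\in C((-\infty,0])\times C([0,+\infty)):~e^{-x/2}u_i\in C_b\}$, acting as $L_0{\bf u}=(u_1''-u_1'+u_1(0)e^x,~u_2''-u_2')$ with $C^1$-matching at $x=0$. Since $L$ acts fiberwise in $\eta$ and all bounds for $L_0$ will be uniform, sectoriality, spectrum, kernel, projection and resolvent formula for $L$ are inherited directly from those for $L_0$, so the entire proof reduces to a one-dimensional ODE analysis.

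Next, I would construct $R(\lambda,L_0)$ explicitly for $\lambda\notin(-\infty,-1/4]\cup\{0\}$. With $X=\sqrt{1+4\lambda}$ (principal branch) and $\nu_{1,2}=(1\mp X)/2$ the roots of $r^2-r-\lambda=0$, the only admissible homogeneous solutions in the weighted space are $e^{\nu_2 x}$ on the left and $e^{\nu_1 x}$ on the right. Variation of parameters on each half-line yields particular solutions, and the forcing $u_1(0)e^x$ on the left is absorbed into a self-consistent $2\times 2$ matching system for the two unknown amplitudes plus the implicit constraint on $u_1(0)$. A direct computation shows the determinant of this system is proportional to $1+(2\lambda-1)X$, which is nonzero off $(-\infty,-1/4]\cup\{0\}$; solving and evaluating at $x=0$ gives exactly formula \eqref{3.1-b}. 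This proves (v) and the inclusion $\sigma(L_0)\subset(-\infty,-1/4]\cup\{0\}$; the reverse inclusion holds because $X$ has a branch across $(-\infty,-1/4]$ and $1+(2\lambda-1)X$ vanishes at $\lambda=0$.

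For sectoriality (i), I would derive the resolvent estimate $\|R(\lambda,L_0){\bf f}\|_{{\mathscr X}_0}\le M|\lambda|^{-1}\|{\bf f}\|_{{\mathscr X}_0}$ in a sector of angle larger than $\pi/2$ around the positive real axis. The Green's kernels are convolutions $\int e^{\nu_j(x-t)}f(t)\,dt$; weighted by $e^{-x/2}$, each piece is controlled with a constant of order $1/|\mathrm{Re}\,\nu_j-1/2|$, and the desired $|\lambda|^{-1}$ decay is recovered after recombining the contributions from both half-lines using $\nu_1\nu_2=-\lambda$ and $X\sim 2\sqrt\lambda$ as $|\lambda|\to\infty$. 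The standard generation criterion (e.g.\ Lunardi, Prop.\ 2.1.11) then yields analyticity of the semigroup.

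Finally, for (iii) and (iv), I would verify directly that $L_0{\bf U}=0$ with ${\bf U}$ as in \eqref{funct-u2-b}: $U_2\equiv 1/3$ gives $U_2''-U_2'=0$, while $U_1(x)=\frac{1-x}{3}e^x$ yields $U_1''-U_1'=-\frac{1}{3}e^x=-U_1(0)e^x$, with matching at $x=0$ automatic; so ${\bf U}$ spans $\ker L_0$. To pin down ${\mathscr P}$, I would read off the residue of $R(\lambda,L_0)$ at $\lambda=0$: the explicit formula expands as $R(\lambda,L_0){\bf f}=\lambda^{-1}Q({\bf f}){\bf U}+O(1)$, revealing $Q({\bf f})=\int_{-\infty}^0 f_1\,dx+\int_0^{+\infty}e^{-x}f_2\,dx$ as the coefficient; the measures $dx$ and $e^{-x}dx$ are the left null eigenfunctions of the formal adjoint $L_0^\ast$, which is the quickest route to guess them. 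The Fredholm alternative in (iv) is then immediate because $0$ is an isolated simple eigenvalue of a sectorial operator. The main obstacle in this plan is the sectoriality estimate in the exponentially weighted sup-norm space: the weight $e^{-x/2}$ sits exactly at the critical rate of $e^{\nu_2 x}$, so the individual half-line bounds diverge as $\lambda\to 0$ along the positive axis, and one must display the cancellation between the two half-lines explicitly to recover the uniform $|\lambda|^{-1}$ bound.
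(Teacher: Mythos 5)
A caveat first: this theorem is \emph{not} proved in the present paper. It appears in the Appendix, which opens with the sentence ``In this appendix we recall some results from \cite{BHL08}\ldots'', so the statement is simply quoted and the reader is referred to \cite{BHL08} for the proof. There is therefore no internal proof to compare you against. Evaluated on its own merits, your plan is the natural one and is almost certainly close to what \cite{BHL08} does: since ${\mathscr L}$ contains no $\eta$-derivatives, reduce to a one-dimensional operator on the two half-lines, build the resolvent by matching the admissible exponentials $e^{\nu_2 x}$ on $(-\infty,0]$ and $e^{\nu_1 x}$ on $[0,+\infty)$, locate the spectrum where the $2\times 2$ matching determinant $1+(2\lambda-1)X(\lambda)$ vanishes or where $X(\lambda)=\sqrt{1+4\lambda}$ has its branch cut, and recover ${\mathscr P}$ from the residue of $R(\lambda,L)$ at the simple pole $\lambda=0$. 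Your verification that $L_0\mathbf{U}=\mathbf{0}$ is correct, and reading off $dx$ and $e^{-x}\,dx$ as null functionals of the formal adjoint is indeed the fastest way to guess $Q$; the determinant you compute and the branch cut of $X$ reproduce (ii) and (v) exactly.

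One piece of your diagnosis is, however, wrong, and is worth correcting because it misattributes the source of the $1/\lambda$ singularity. You assert that the weight $e^{-x/2}$ ``sits exactly at the critical rate of $e^{\nu_2 x}$'' so that the half-line convolution bounds blow up as $\lambda\to 0^+$. That is not the case: at $\lambda=0$ one has $X=1$, $\nu_1=0$, $\nu_2=1$, both at a positive distance from $1/2$, and the weighted estimates are finite there. The weight coincides with the decay rate $\nu_1=\nu_2=1/2$ only when $X=0$, i.e.\ at $\lambda=-1/4$; that is what produces the half-line $(-\infty,-1/4]$ in item (ii). The blow-up near the origin comes from a different source, namely the nonlocal term $u_1(0)e^{x}$ in ${\mathscr L}$: seeking a particular solution of $u_1''-u_1'-\lambda u_1=-u_1(0)e^x$ of the form $c\,e^x$ forces $c=u_1(0)/\lambda$, because the characteristic polynomial evaluated at $1$ equals $-\lambda$. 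That $1/\lambda$ \emph{is} the isolated eigenvalue at $0$, and it poses no obstacle to sectoriality: a sectorial operator is allowed an isolated eigenvalue at the vertex, and the generation criterion only requires $\|R(\lambda,L)\|\le M/|\lambda|$ for $\lambda$ large in a sector around the positive real axis, where $|\nu_j-1/2|\sim\sqrt{|\lambda|}$ and the determinant behaves like $2\lambda X$, giving the needed decay with no cancellation to chase. In short, the plan is sound; the ``main obstacle'' you flag at the end is a non-issue and is already absorbed into items (ii)--(iv) of the statement.
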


\subsection{The operator $L_{\eps}$.}
For any $\eps>0$, we consider the operator $L+\eps A$ defined by
\begin{eqnarray*}
\left\{
\begin{array}{l}
D(L+\eps A)= \Big\{{\bf u}\in \tilde u_1\in C^{2,0}(I_-)
\cap C^{0,2}(I_-)\times C^{2,0}(I_+)\cap C^{0,2}(I_+):\\[1mm]
\qquad\qquad\qquad\;\;\; {\bf u},~{\bf u}_{\eta\eta},
~{\mathscr L}{\bf u}\in {\mathscr X},~D_x^{(j)}u_1(0,\cdot)=D_x^{(j)}u_2(0,\cdot),~j=0,1,\\[1mm]
\qquad\qquad\qquad\;\;\;
D^{(j)}_{\eta}u_i(\cdot,-\ell_0/2)=D^{(j)}_{\eta}u_i(\cdot,\ell_0/2),
~i=1,2,~j=0,1\Big\},\\[3.5mm]
({L}+\eps A){\bf u}= \left\{
\begin{array}{ll}
D_{xx} u_1+\eps D_{\eta\eta}u_1-D_xu_1 +u_1(0,\cdot)e^x, & (x,y)\in I_-,\\[1mm]
D_{xx}u_2+\eps D_{\eta\eta}u_2-D_xu_2, & (x,y)\in I_+.
\end{array}
\right.
\end{array}
\right.
\end{eqnarray*}

\begin{theorem}
\label{thm-generation} The following properties are met.
\begin{enumerate}[\rm (i)]
\item
The operator $L+\eps A$ is closable and its closure $L_{\eps}$ is
sectorial;
\item
the restriction of $L_{\eps}$ to $(I-{\mathscr P})({\mathscr X})$ is
sectorial and $0$ is in its resolvent set;
\item
let ${\bf f}={\bf h}\,\varphi$ for some ${\bf h}\in (I-{\mathscr
P})({\mathscr X})$, independent of $y$, and some $\varphi\in
C_{\sharp}^{2\alpha}$ $(\alpha\in (0,1)\setminus\{1/2\})$. Then, the
function $R(0,L_{\eps}){\bf f}$ belongs to $D(L+\e A)$. Moreover,
there exists a positive constant $C$, depending on $\eps$ and
$\alpha$ but being independent of ${\bf h}$ and $\varphi$, such that
\begin{equation}
\qquad\;\;\;\|D_{x}^{(i)}R(0,L_{\eps}){\bf f}\|_{\mathscr
X}+\|D_{\eta}^{(i)}R(0,L_{\eps}){\bf f}\|_{\mathscr X} \le C\|{\bf
h}\|_{\mathscr X}\|\varphi\|_{C^{2\alpha}([-\ell_0/2,\ell_0/2])},
\label{estim-R}
\end{equation}
for $i=0,1,2$.
\end{enumerate}
\end{theorem}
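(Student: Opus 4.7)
The plan is to exploit the fact that $\eps A=\eps D_{\eta\eta}$, with periodic boundary conditions, is diagonalized by the basis $\{w_k\}_{k\ge 0}$ with eigenvalues $-\lambda_k$, while $L$ acts only on the $x$-variable. Expanding any ${\bf u}(x,\eta)=\sum_{k\ge 0}\hat{\bf u}(x,k)w_k(\eta)$, the operator $L+\eps A$ decouples mode-by-mode into the family $\{L-\eps\lambda_k:k\ge 0\}$ acting on the $x$-variable alone. Since $L$ is sectorial with $\sigma(L)=\{0\}\cup(-\infty,-1/4]$ by Theorem \ref{thm:1}, each shift $L-\eps\lambda_k$ is sectorial with a uniform-in-$k$ constant, and for $k\ge 1$ the value $\mu=0$ lies in its resolvent set because $\eps\lambda_k>0\notin\sigma(L)$.

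For items (i) and (ii), I first note that ${\mathscr P}$ commutes with $L+\eps A$: it commutes with $L$ as its spectral projection (Theorem \ref{thm:1}(iii)), and it commutes with $A$ because ${\mathscr P}$ only integrates in $x$ against fixed functions, producing an output proportional to ${\bf U}(x)$, so $\eta$-differentiation passes through. The candidate resolvent
\[R(\mu,L_\eps){\bf f}=\sum_{k\ge 0}R(\mu+\eps\lambda_k,L)\hat{\bf f}(\cdot,k)\,w_k(\eta)\]
is then defined on any sector $\Sigma$ containing the positive real axis and not meeting $\sigma(L)$. The uniform bound $\|\nu R(\nu,L)\|\le M$ valid for $\nu$ in $\Sigma+\eps\lambda_k\subset\Sigma$ gives $\|\mu R(\mu,L_\eps)\|\le M$, so that the closure $L_\eps$ is sectorial. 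On the invariant subspace $(I-{\mathscr P})({\mathscr X})$, the mode $k=0$ is handled by Theorem \ref{thm:1}(iv) (invertibility of the restriction of $L$), while for $k\ge 1$ the invertibility of $L-\eps\lambda_k$ together with $\|R(0,L-\eps\lambda_k)\|\le C/(1+\eps\lambda_k)$ gives $0$ in the resolvent set of $L_\eps|_{(I-{\mathscr P})({\mathscr X})}$.

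For (iii), take ${\bf f}={\bf h}\,\varphi$ with ${\bf h}\in(I-{\mathscr P})({\mathscr X})$ independent of $\eta$ and $\varphi\in C_{\sharp}^{2\alpha}$. The factorization $\hat{\bf f}(x,k)={\bf h}(x)\hat\varphi(k)$ and the mode-by-mode analysis yield
\[\bigl(R(0,L_\eps){\bf f}\bigr)(x,\eta)=\sum_{k\ge 0}\hat\varphi(k)\,R(\eps\lambda_k,L){\bf h}(x)\,w_k(\eta).\]
Differentiating $i$ times in $\eta$ multiplies the $k$-th term by $\lambda_k^{i/2}$, so one must control $\sum_k\lambda_k^{i/2}|\hat\varphi(k)|\,\|R(\eps\lambda_k,L){\bf h}\|_{{\mathscr X}}$. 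Using the sectorial estimate $\eps\lambda_k\|R(\eps\lambda_k,L){\bf h}\|_{{\mathscr X}}\le M\|{\bf h}\|_{{\mathscr X}}$ for $k\ge 1$ and a separate argument for $k=0$ on $(I-{\mathscr P})({\mathscr X})$, the problem reduces to summability of $|\hat\varphi(k)|\lambda_k^{i/2-1}$ for $i=0,1,2$; for the worst case $i=2$ this becomes $\sum_k|\hat\varphi(k)|$, which converges by the embedding $C_{\sharp}^{2\alpha}\hookrightarrow H^s$ for some $s\in(1/2,2\alpha+1/2)$ (explaining the restriction $\alpha\ne 1/2$), with quantitative constant $\|\varphi\|_{C^{2\alpha}([-\ell_0/2,\ell_0/2])}$. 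The $D_x$-derivatives up to order two are then recovered from the equation $(L-\eps\lambda_k)R(\eps\lambda_k,L){\bf h}={\bf h}$ together with the explicit boundary representation \eqref{3.1-b}.

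The main obstacle is upgrading abstract sectorial bounds into the exponentially weighted pointwise estimates implicit in the ${\mathscr X}$-norm. This forces me to exploit the explicit resolvent kernel of $L$, in particular the representation \eqref{3.1-b} in terms of the exponents $\nu_{1}(\eps\lambda_k),\nu_{2}(\eps\lambda_k)$ (whose real parts grow like $\sqrt{\eps\lambda_k}$), together with a Duhamel-type formula away from $x=0$; tracking these constants uniformly in $k$ produces the required $e^{-x/2}$-weighted bounds and simultaneously identifies the domain \eqref{dom-L-A} as a core and the closure $L_\eps$ as the operator with that domain. A secondary difficulty is ensuring that, in each Fourier mode, the matching conditions at $x=0$ propagate through the differentiations in $\eta$ needed in part (iii), which is why the factor $\varphi$ must carry enough Hölder regularity (not merely continuity) to guarantee absolute convergence of the series defining $D_{\eta}^{(i)} R(0,L_\eps){\bf f}$.
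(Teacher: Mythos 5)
First, note that Theorem \ref{thm-generation} is recalled in the appendix from \cite{BHL08} and is not proved in the present paper, so your argument can only be assessed on its own merits. Your strategy --- diagonalizing $\eps A$ in the periodic Fourier basis $\{w_k\}$ and reducing to the shifted family $L-\eps\lambda_k$ --- is natural, but it contains a gap already at items (i)--(ii). The passage from the termwise bounds $\|\nu R(\nu,L)\|\le M$ to $\|\mu R(\mu,L_\eps)\|\le M$ is not legitimate in ${\mathscr X}$, which is a weighted sup-norm space: uniform bounds on the Fourier multipliers of an operator do not imply boundedness on spaces of continuous functions (this is precisely the failure of, e.g., the conjugate-function operator on $C(\T)$), and for a general ${\bf f}\in{\mathscr X}$ the series $\sum_k R(\mu+\eps\lambda_k,L)\hat{\bf f}(\cdot,k)w_k$ need not even converge in the sup norm. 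Mode-by-mode synthesis would suffice in an $L^2$-based setting, but in ${\mathscr X}$ one must instead exploit that $L$ and $\eps A$ are \emph{commuting} sectorial operators --- e.g.\ via the product $e^{t(L+\eps A)}=e^{tL}e^{\eps tA}$ of the two analytic semigroups, or via the Da Prato--Grisvard contour formula for the resolvent of a sum of commuting sectorial operators.

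For item (iii) there is a second, independent gap. Your summability criterion reduces the worst case $i=2$ to $\sum_k|\hat\varphi(k)|<\infty$; by Bernstein's theorem this holds for all $\varphi\in C_{\sharp}^{2\alpha}$ only when $2\alpha>1/2$, i.e.\ $\alpha>1/4$, whereas the theorem is asserted for every $\alpha\in(0,1)\setminus\{1/2\}$. (Your claimed embedding $C_{\sharp}^{2\alpha}\hookrightarrow H^s$ for $s$ up to $2\alpha+1/2$ is also incorrect; the admissible range is $s<2\alpha$, which again forces $\alpha>1/4$.) Moreover, the exclusion $\alpha=1/2$ has nothing to do with Fourier summability --- $H^s$ embeddings are perfectly well behaved there. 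It is the standard endpoint exclusion in the identification $D_A(\alpha,\infty)=C_{\sharp}^{2\alpha}$ for the second-derivative operator, which indicates that the intended route to \eqref{estim-R} is Schauder/interpolation regularity for $\eps D_{\eta\eta}$ in the $\eta$-variable (the two $\eta$-derivatives being absorbed by the elliptic smoothing of $R(0,L_{\eps})$ acting on a $C^{2\alpha}$ datum, with the explicit kernel of $R(\lambda,L)$ supplying the weighted bounds in $x$), rather than absolute convergence of the differentiated Fourier series.
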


\subsection{Proof of Theorem \ref{thm-4.1}}

We split the proof into three steps. In the first one, we show that
Problem \eqref{4order} admits a unique solution $\Phi$ in some time
domain  $[0,T_0]$. Since this result can be proved using the same
arguments as in Subsection \ref{subsect-proof}, we just sketch the
proof. Then, in Steps 2 and 3, we show that $\Phi$ exists and is
smooth in the whole of $[0,+\infty)$.
\par
{\em Step 1.} As we have already remarked in the proof of
Proposition \ref{prop-5.4}, the realization $A$ of the second order
derivative in $C([-\ell_0/2,\ell_0/2])$, with domain
$C_{\sharp}^1\cap C^2$, is a sectorial operator with spectrum
contained in $(-\infty,0]$. By \cite[Prop. 2.4.1 \& 2.4.4]{lunardi}
the operator $B:=-3A^2-A$ is sectorial in $C([-\ell_0/2,\ell_0/2])$
with domain $D(A^2)$. Moreover,
$D_{B}(\alpha,\infty)=C_{\sharp}^{4\alpha}$, with equivalence of the
corresponding norms, for any $\alpha\in (0,2)$ such that
$4\alpha\not\in\N$.

The variation of constants formula shows that any solution $\Phi\in
C^{1,4}([0,+\infty)\times [-\ell_0/2,\ell_0/2])$ to the Cauchy
problem \eqref{4order} is a fixed point of the operator $\Gamma$,
formally defined by
\begin{eqnarray*}
(\Gamma(\Phi))(\tau,\cdot)=e^{\tau B}\Phi_0
+\int_0^{\tau}e^{(\tau-s)B}(\Phi_{\eta}(s,\cdot))^2ds,\qquad\;\,\tau>0,
\end{eqnarray*}
where $\{e^{tB}\}$ denotes the semigroup generated by $B$.

Let us fix $\alpha\in (1/4,1/2)$. Theorem 7.1.2 in \cite{lunardi}
implies that $\Gamma$ has a unique fixed point $\Phi$ in
$C([0,T_0];D_B(\alpha,\infty))$. A bootstrap argument allows to
prove that $\Phi$ belongs to ${\mathcal Y}_{T_0}$. Using \cite[Prop.
4.2.1]{lunardi} and our assumptions on $\Phi_0$, it can be shown,
first that $\Phi\in C^{\beta,4\gamma}([0,T_0]\times
[-\ell_0/2,\ell_0/2])$ for any $\beta,\gamma\in (0,1)$, and, then,
that $\Phi_{\eta}\in C^{\beta}([0,T_0]\times [-\ell_0/2,\ell_0/2])$
for any $\beta\in (0,3/4)$. Moreover,
$D_{\eta}^{(j)}\Phi(\cdot,-\ell_0/2)\equiv
D_{\eta}^{(j)}\Phi(\cdot,\ell_0/2)$ for $j=0,1,2,3$. Next, applying
\cite[Thm. 4.3.1(i)]{lunardi}, we deduce that $\Phi\in
C^{1,4}([0,T]\times [-\ell_0/2,\ell_0/2])$ and is a solution to
Problem \eqref{4order}. Moreover, since $\Phi_0\in
D_B(1+(2+\alpha)/4,\infty)$, $\Phi_{\tau}$ is bounded in $[0,T_0]$
with values in $D_B(1/2+\alpha/4,\infty)$. Hence, the function
$\Phi_{\tau}$ belongs to $C^{0,2+\alpha}([0,T_0]\times
[-\ell_0/2,\ell_0/2])$. As a byproduct, $\Phi_{\eta\eta\eta\eta}$ is
in $C^{0,2+\alpha}([0,T_0]\times [-\ell_0/2,\ell_0/2])$ as well, and
$D^{(j)}_{\eta}\Phi(\cdot,-\ell_0/2)=D^{(j)}_{\eta}\Phi(\cdot,\ell_0/2)$
for $j=4,5,6$.

Using a continuation argument, we can extend $\Phi$ to a maximal
domain $[0,T)$ with a function (still denoted by $\Phi$) which
belongs to ${\mathscr Y}_{T'}$ for any $T'<T$.

The rest of the proof is devoted to show that $T=+\infty$. The main
step is an {\it a priori estimate} suggested by the proof \cite[Thm.
2.4]{tadmor}, which deals with $L^2$ regularity for the K--S
equation.
\par
{\em Step 2.} Here, we show that
\begin{equation}
\|\Phi_{\eta}(\t,\cdot)\|_2\le
e^{\frac{13}{6}\tau}\|D_{\eta}\Phi_0\|_2,\qq\;\,\tau\in [0,T).
\label{apriori-z}
\end{equation}
For this purpose, we introduce the function $v$, defined by
$v(\tau,\eta)=e^{-2\tau}\Phi_{\eta}(\tau,\eta)$ for any
$(\tau,\eta)\in [0,T)\times [-\ell_0/2,\ell_0/2]$. The smoothness of
$\Phi$ implies that $v\in C^{1,4}([0,T)\times
[-\ell_0/2,\ell_0/2])$, solves the parabolic equation
\begin{equation}
v_\tau= -3v_{\eta\eta\eta\eta}-v_{\eta\eta} - e^{2\tau}vv_\eta-2v,
\label{parab-eq}
\end{equation}
and satisfies the boundary conditions
$D_{\eta}^{(k)}v(\t,-\ell_0/2)=D_{\eta}^{(k)}v(\t,\ell_0/2)$ for any
$\t\in [0,T)$ and $k=0,1,2,3$. Multiplying both the sides of
\eqref{parab-eq} by $v(\tau,\cdot)$, integrating on
$(-\ell_0/2,\ell_0/2)$ and observing that the integral over
$(-\ell_0/2,\ell_0/2)$ of $(v(\t,\cdot))^2v_{\eta}(\t,\cdot)$
vanishes for any $\tau\in (0,T]$, we get
\begin{equation}
\frac{d}{d\t}\|v(\t,\cdot)\|_2^2
+3\|v_{\eta\eta}(\t,\cdot)\|_2^2-\|v_{\eta}(\t,\cdot)\|_2^2+2\|v(\t,\cdot)\|_2^2=0.
\qq\;\,\tau\in [0,T).
\label{energy-1}
\end{equation}
In view of the estimate
\begin{eqnarray*}
\|v_{\eta}(\t,\cdot)\|_2^2\le
\|v(\t,\cdot)\|_2\|v_{\eta\eta}(\t,\cdot)\|_2 \le
3\|v_{\eta\eta}(\t,\cdot)\|_2^2+\frac{5}{3}\|v(\tau,\cdot)\|_2^2,\qquad\;\,\tau\in
[0,T),
\end{eqnarray*}
Formula \eqref{energy-1} leads us to the inequality
\begin{eqnarray*}
\frac{d}{d\t}\|v(\t,\cdot)\|_2^2+\frac{1}{3}\|v(\t,\cdot)\|_2^2\le
0, \qq\;\,\t\in [0,T),
\end{eqnarray*}
from which Estimate \eqref{apriori-z} follows at once.
\par
{\em Step 3}. Let us consider the function $\Psi$, defined by
$\Psi(\tau,\eta)=\Phi(\tau,\eta)-\Pi(\Phi(\tau,\cdot))$ for any
$\tau\in [0,T)$ and any $\eta\in [-\ell_0/2,\ell_0/2]$, where
$\Pi(\Phi(\tau,\cdot))$ denotes the average of $\Phi(\tau,\cdot)$
over the interval $(-\ell_0/2,\ell_0/2)$. Applying Poincar\'e-Wirtinger
inequality, we get
\begin{equation}
\|\Phi(\tau,\cdot)-\Pi(\Phi(\tau,\cdot))\|_{\infty}\le\sqrt{\ell_0}e^{\frac{13}{6}\tau}\|D_{\eta}\Phi_0\|_2,
\qquad\;\,\tau\in [0,T). \label{I-P-sup}
\end{equation}

Let us now show that the function $\tau\mapsto\Pi(\Phi(\tau,\cdot))$
satisfies a similar estimate. For this purpose, we fix $T'\in
(0,T)$, $\tau\in [0,T')$, and apply the operator $\Pi$ to both the
sides of \eqref{4order}. Since $\Phi$ and its derivatives satisfy
periodic boundary conditions,
\begin{eqnarray*}
\frac{d}{d\tau}\Pi(\Phi(\tau,\cdot))= \Pi(\Phi_{\tau}(\tau,\cdot))=
-\frac{1}{2\ell_0}\Pi((\Phi_{\eta}(\tau,\cdot))^2),
\end{eqnarray*}
for any $\tau\in [0,T)$. Taking \eqref{apriori-z} into account, we
can then estimate
\begin{eqnarray*}
\left |\frac{d}{d\tau}\Pi(\Phi(\tau,\cdot))\right |
\le\frac{1}{2\ell_0}e^{\frac{13}{3}\tau}\|D_{\eta}\Phi_0\|_2^2,\qquad\;\,\tau\in
[0,T).
\end{eqnarray*}
Hence,
\begin{equation}
|\Pi(\Phi(\tau))|\le|\Pi(\Phi_0)|+\int_0^{\tau}\left
|\frac{d}{d\tau}\Pi(\Phi(\tau,\cdot))\right |d\tau\le |\Pi(\Phi_0)|
+\frac{3}{26\ell_0}\|D_{\eta}\Phi_0\|_2^2e^{\frac{13}{3}\tau},
\label{Pi-sup}
\end{equation}
for any $\tau\in [0,T)$. Estimates \eqref{I-P-sup} and
\eqref{Pi-sup} show that $\Phi$ is bounded in $[0,T)\times
[-\ell_0/2,\ell_0/2]$. Therefore, we can apply \cite[Prop.
7.2.2]{lunardi} with $X_{\alpha}=D_B(\alpha,\infty)$, which implies
that $T=+\infty$.

\end{document}